\numberwithin{equation}{section}
\theoremstyle{plain}
\newtheorem{theorem}{Theorem}[section]
\theoremstyle{definition}
\newtheorem{remark}[theorem]{Remark}
\theoremstyle{definition}
\newtheorem{definition}[theorem]{Definition}
\newtheorem{exmp}[theorem]{Example}
\theoremstyle{plain}
\theoremstyle{plain}
\newtheorem{lemma}[theorem]{Lemma}
\theoremstyle{plain}
\DeclareMathOperator{\reg}{reg}
\DeclareMathOperator{\dimn}{dim}
\DeclareMathOperator{\hilb}{Hilb}
\DeclareMathOperator{\bigin}{bigin}
\DeclareMathOperator{\multigin}{multigin}
\DeclareMathOperator{\pic}{Pic}
\DeclareMathOperator{\maxdeg}{maxdeg}
\DeclareMathOperator{\cox}{Cox}
\DeclareMathOperator{\nef}{Nef}
\title{Gotzmann's persistence theorem for smooth projective toric varieties}
\author{Patience Ablett}
\address{Warwick Mathematics Institute, University of Warwick, Coventry, CV4 7EZ}
\email{patience.ablett@warwick.ac.uk}
\begin{document}

\maketitle
\begin{abstract}
    Gotzmann's persistence theorem enables us to confirm the Hilbert polynomial of a subscheme of projective space by checking the Hilbert function in just two points, regardless of the dimension of the ambient space. We generalise this result to products of projective spaces, and then extend our result to any smooth projective toric variety. The number of points to check depends solely on the Picard rank of the ambient space, with no dependence on the dimension.
\end{abstract}

\section{Introduction}

The Hilbert scheme $\hilb_P(\mathbb{P}^n)$ is a widely studied object in algebraic geometry. From an algebraic perspective this scheme parameterises homogeneous saturated ideals with a given Hilbert polynomial. In~\cite{haiman2002multigraded} Haiman and Sturmfels extended these ideas to the multigraded setting, where the Hilbert scheme parameterises homogeneous ideals with a given Hilbert function in a polynomial ring graded by some abelian group. A case of particular interest is when the multigraded ring in question is the Cox ring, denoted $\cox(X)$, of a smooth projective toric variety $X$. In this case the Hilbert function of a homogeneous ideal in $\cox(X)$ eventually agrees with a polynomial for degrees sufficiently far into the nef cone of $X$. We can therefore define $\hilb_P(X)$ in an analogous manner to $\hilb_P(\mathbb{P}^n)$. We consider the parameter space of homogeneous ideals in $\cox(X)$ which are saturated with respect to the irrelevant ideal of $X$ and have Hilbert polynomial $P$.

It is natural to ask what properties of standard-graded Hilbert schemes have an extension to the smooth projective toric variety case. Of particular interest are two theorems of Gotzmann, regularity and persistence, which are used in the explicit construction of the Hilbert scheme. Gotzmann's regularity theorem gives a bound on the Castelnuovo-Mumford regularity of a standard-graded ideal as introduced in~\cite{mumford1966lectures}. Gotzmann's persistence theorem can be informally restated in the following way. For a homogeneous ideal $I$ in a standard-graded polynomial ring, a Hilbert polynomial $P(t)$, and sufficiently large $d \in \mathbb{N}$, checking that $H_I(d)=P(d)$ and $H_I(d+1)=P(d+1)$ guarantees that $P_I(t)=P(t)$. Here, $H_I(d)$ denotes the Hilbert function of $S/I$ and $P_I(t)$ denotes the associated Hilbert polynomial. For a formal statement, see Theorem \ref{persistence}. The surprising aspect here is that by checking the value of $H_I(d)$ in just two points we have identified the polynomial $P_I(t)$, as opposed to the expected $\deg(P_I)+1$ points. Combining this result with Gotzmann's regularity theorem allows us to obtain explicit equations for the Hilbert scheme $\hilb_P(\mathbb{P}^n)$.

Maclagan and Smith define Castelnuovo-Mumford regularity for the multigraded case~\cite{maclagan2004multigraded} and generalise Gotzmann's regularity theorem~\cite{maclagan2003uniform} to any smooth projective toric variety. Their generalisation recovers Gotzmann's original result when $X = \mathbb{P}^n$. In this paper we generalise our earlier informal restatement of Gotzmann's persistence theorem to any smooth projective toric variety. We begin with the case of the product of projective spaces, before extending to the more general setting.

We introduce some terminology that will allow us to state Theorem \ref{maintheorem1}. Let $X$ be a smooth projective toric variety of Picard rank $s$. For a homogeneous ideal $J \subseteq \cox(X)$, Definition \ref{whatispoly} gives a formal description of the associated polynomial $P_J(t_1,\dots,t_s)$. We say that $P(t_1,\dots,t_s) \in \mathbb{Q}[t_1,\dots,t_s]$ is a Hilbert polynomial on $\cox(X)$ if there exists some ideal $J \subseteq \cox(X)$, homogeneous with respect to the $\mathbb{Z}^s$-grading, such that $P_J=P$.
\begin{theorem}\label{maintheorem1}
    Let $S=\cox(\mathbb{P}^{n_1} \times \dots \times \mathbb{P}^{n_s})$, let $I \subseteq S$ be an ideal, homogeneous with respect to the $\mathbb{Z}^s$-grading, and let $P(t_1,\dots,t_s) \in \mathbb{Q}[t_1,\dots,t_s]$ be a Hilbert polynomial on $S$. Then there exists a point $(d_1, \dots, d_s) \in \mathbb{N}^s$ such that if $H_I(b_1,\dots,b_s) = P(b_1, \dots, b_s)$ for all points $(b_1,\dots,b_s) \in \mathbb{N}^s$ with $b_i \in \{d_i,d_i+1\}$, then $P_I=P$.
\end{theorem}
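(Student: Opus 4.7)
The plan is to reduce the problem to the setting of strongly bistable (multi-Borel) monomial ideals via multigraded generic initial ideals, and then to apply the classical Gotzmann persistence theorem slice by slice in each of the $s$ coordinate directions.

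First I would replace $I$ by its multigraded generic initial ideal with respect to a monomial order refining the $\mathbb{Z}^s$-grading. This step preserves the multigraded Hilbert function, and hence the associated polynomial $P_I$; it therefore suffices to prove the persistence statement when $I$ is a strongly bistable monomial ideal. The combinatorial structure of such ideals, extending Crona's results on bistable ideals to the multi-factor setting, supplies Macaulay-type growth bounds in each coordinate direction together with a characterisation of equality, which are the ingredients fuelling any classical-style persistence argument.

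Next I would choose $(d_1,\dots,d_s)\in\mathbb{N}^s$ to lie past the multigraded Castelnuovo-Mumford regularity bounds of Maclagan and Smith~\cite{maclagan2003uniform}, so that in particular $H_I$ already agrees with a single polynomial on the shifted orthant $(d_1,\dots,d_s)+\mathbb{N}^s$. The problem thus reduces to identifying this polynomial with $P$. For each point $(b_2,\dots,b_s)\in\{d_2,d_2+1\}\times\cdots\times\{d_s,d_s+1\}$, the function $b_1\mapsto H_I(b_1,b_2,\dots,b_s)$ can be interpreted as the standard-graded Hilbert function of an ideal obtained by restricting $I$ to the first block of variables. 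The classical persistence theorem, applied with the two-point hypothesis $b_1\in\{d_1,d_1+1\}$, then propagates the equality $H_I=P$ to all $b_1\geq d_1$ along that slice. Iterating the slice argument in each of the remaining $s-1$ coordinate directions extends the equality across the full shifted orthant, and hence forces $P_I=P$.

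The main technical obstacle will be the slice construction itself. For an arbitrary multigraded ideal, restricting to a fixed multidegree in all but one direction does not directly produce an ideal in a standard-graded polynomial ring; what one obtains is really a graded module. The strongly bistable reduction is critical here because it ensures that the slices inherit a Borel-fixed structure, so that a classical Gotzmann-type argument using Macaulay's growth bound and its equality case transfers correctly. A secondary subtlety is arranging the $s$ iterated applications of classical persistence so that each inherits valid two-point data from the previous step; the product form $\{d_i,d_i+1\}$ of the hypothesis at each coordinate is precisely what makes this bookkeeping consistent and explains why $2^s$ rather than $2s$ data points are needed.
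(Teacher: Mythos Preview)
Your outline has a genuine gap at the iteration step. For the two-point hypothesis in direction $1$ to trigger classical persistence, you need $d_1$ to be at least the Gotzmann number of the one-variable polynomial $t_1\mapsto P(t_1,b_2,\ldots,b_s)$, and this Gotzmann number grows without bound as $b_2,\ldots,b_s$ increase. For the first pass you only need it at the $2^{s-1}$ hypercube values of $(b_2,\ldots,b_s)$, which already forces $d_1$ to be chosen after $d_2,\ldots,d_s$; but then the second pass (persistence in $b_2$) must hold for every $b_1\geq d_1$ produced by the first pass, and the Gotzmann numbers of the $t_2$-slices at those $b_1$ are again unbounded. No ordering of the $d_i$ resolves this circularity. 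Example~\ref{counter2} is precisely this obstruction in the bigraded case: for $P(t_1,t_2)=3t_1+2t_2+1$ the Gotzmann numbers of the two slice polynomials are $2d_2+4$ and $3d_1+2$, and the inequalities $d_1\geq 2d_2+4$, $d_2\geq 3d_1+2$ have no solution in $\mathbb{N}^2$. Choosing $(d_1,\ldots,d_s)$ merely past the multigraded regularity of $I$ and $P$ does nothing to address this.

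The paper breaks the circularity by never applying persistence to the raw slices $P(b_1,\ldots,b_{r-1},t_r,b_{r+1},\ldots,b_s)$. Lemma~\ref{standecomp} instead expands $P=\sum_{i_2,\ldots,i_s}F_{i_2\ldots i_s}(t_1)\prod_{j\geq 2}\binom{t_j-a_j+i_j}{i_j}$ with the indices $i_j$ bounded by $\maxdeg(P)$. Only finitely many standard-graded polynomials $F_{i_2\ldots i_s}$ occur, so $d_1$ can be taken larger than all their Gotzmann numbers, independently of $d_2,\ldots,d_s$. The induction then runs on the coefficient polynomials $P^r_{\boldsymbol{b}}(t_r)$ of Definition~\ref{partial}, in which the entries $b_j$ with $j>r$ are bounded by $p_j$ rather than lying near $d_j$; hence each $d_r$ depends only on $d_1,\ldots,d_{r-1}$. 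A second point your sketch misses: for $r\geq 2$ these $P^r_{\boldsymbol{b}}$ are not standard-graded Hilbert polynomials, so classical Gotzmann does not apply directly. The paper instead invokes Crona's bigraded persistence (Theorem~\ref{Gotz}) with its Euclidean-division growth condition, after Lemma~\ref{justbistable} realises each $P^r_{\boldsymbol{b}}(t_r)$ as $P_K(1,t_r)$ for a bilex ideal $K$.
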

Theorem \ref{maintheorem1} says that for an ideal $I$ in the Cox ring of the product of $s$ projective spaces we can confirm $P_I(t_1,\dots,t_s)=P(t_1,\dots,t_s)$ simply by checking $H_I(b_1,\dots,b_s)$ at the vertices of an $s$-dimensional hypercube in $\mathbb{N}^s$. This is therefore a natural $s$-dimensional generalisation of our earlier informal restatement of Gotzmann's original persistence result. Note that the proof of Theorem \ref{maintheorem1} is constructive, allowing us to find appropriate $(d_1,\dots,d_s)$ for a given $I$ and $P$. As outlined in \cite{maclagan2003uniform}*{Algorithm 6.3}, $P_I(t_1,\dots,t_s)$ can have as many as $\binom{n_1+ \dots + n_s + s}{s}$ coefficients, so we would naively expect to check $H_I(d_1,\dots,d_s)$ in this many points to verify $P_I(t_1,\dots,t_s)$. Even when setting $n_i=1$ for all $i$ we have $\binom{n_1+ \dots + n_s + s}{s} \geq 2^s$ for all $s \in \mathbb{Z}_{\geq 1}$. Further, $\binom{n_1+ \dots + n_s + s}{s}$ grows significantly faster than $2^s$ as $s$ increases, meaning our theorem is a significant improvement.

In the case of $\mathbb{P}^n \times \mathbb{P}^m$ we contrast our result with Crona's persistence-type result \cite{crona2006standard}*{Theorem 4.10} for products of two projective spaces. Crona's theorem states that if $H_I(b_1+1,b_2)$ displays maximal growth from $H_I(b_1,b_2)$, then $H_I(u,b_2)$ continues to display maximal growth for all $u \geq b_1$. Crona's result predicts the behaviour of the Hilbert function $H_I(b_1,b_2)$ as $b_1$ grows for \textit{fixed} values of $b_2$, and vice versa. However, our result can predict growth as \textit{both} $b_1$ and $b_2$ increase in value, meaning we can satisfy criterion (h) of \cite{haiman2002multigraded}*{Proposition 3.2} for a supportive set. We expand on this difference in Example \ref{counter2}. Further work in this area includes that of Favacchio~\cite{lexsegment} which characterises the Hilbert functions of ideals in the Cox ring of $\mathbb{P}^n \times \mathbb{P}^m$, and work of Gasharov~\cite{gasharov1997extremal} which generalises Gotzmann's persistence theorem to finitely generated modules over a standard-graded polynomial ring.

The key new ideas in the proof of Theorem \ref{maintheorem1} are Lemma \ref{standecomp} and Lemma \ref{justbistable}. Lemma \ref{standecomp} allows us to write the Hilbert polynomial uniquely in terms of standard-graded Hilbert polynomials multiplied by binomial coefficients. This in turn enables us to use Gotzmann's original persistence theorem in the proof of Theorem \ref{maintheorem1}. Lemma \ref{justbistable} justifies an extension of results on bilex ideals to the multilex case.

We then extend Theorem \ref{maintheorem1} to more general smooth projective toric varieties. The situation is particularly nice for Picard-rank-2 varieties, whose Cox rings are described explicitly at the beginning of Section \ref{three}.
\begin{theorem}\label{picrank2}
    Let $J \subseteq R$, where $R$ is the Cox ring of a smooth Picard-rank-2 toric variety and $J$ is an ideal homogeneous with respect to the $\mathbb{Z}^2$-grading. Let $P(t_1,t_2)$ be a Hilbert polynomial on $R$. Then there exists $(d_1,d_2) \in \mathbb{N}^2$ such that if \begin{gather*}
        H_J(d_1,d_2)=P(d_1,d_2), \quad H_J(d_1+1,d_2)=P(d_1+1,d_2), \\ H_J(d_1,d_2+1)=P(d_1,d_2+1), \quad H_J(d_1+1,d_2+1)=P(d_1+1,d_2+1), 
    \end{gather*}
    then $P_J=P$.
\end{theorem}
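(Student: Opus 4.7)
My strategy is to reduce Theorem \ref{picrank2} to Theorem \ref{maintheorem1} using the explicit description of Picard-rank-2 Cox rings given in Section \ref{three}. From that description, $R$ is a polynomial ring $k[x_0,\dots,x_{n_1},y_0,\dots,y_{n_2}]$ whose $\mathbb{Z}^2$-grading differs from the standard bigrading on $S := \cox(\mathbb{P}^{n_1}\times\mathbb{P}^{n_2})$ only through nonnegative integer twist parameters $a_0\le\cdots\le a_{n_2}$ attached to the $y_j$. The underlying monomial bases of $R$ and $S$ coincide, and a monomial $\mathbf{x}^{\alpha}\mathbf{y}^{\beta}$ of $R$-bidegree $(c,d)$ is of $S$-bidegree $(c+\sum_j a_j\beta_j,\, d)$. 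This gives a concrete bridge between the two gradings and suggests that the persistence result for products of projective spaces should control the Picard-rank-2 case.

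Using this correspondence, for each fixed $d$ I would decompose the slice $R_{(*,d)} = \bigoplus_{|\beta|=d} k[\mathbf{x}]\cdot\mathbf{y}^{\beta}$ as a direct sum of shifted copies of the standard-graded polynomial ring $k[\mathbf{x}]$. The intersections of $J$ with each summand are graded $k[\mathbf{x}]$-submodules, so $H_J(c,d)$ splits into a sum indexed by $\beta$. This is parallel in spirit to Lemma \ref{standecomp}, and it lets me express $P_J(t_1,t_2)$ as a combination of $\mathbb{Z}$-graded Hilbert polynomials whose arguments shift with $\beta$.

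I would then invoke Theorem \ref{maintheorem1}, together with standard Gotzmann persistence on each summand, to pin down the correct $(d_1,d_2)$. Concretely, $(d_1,d_2)$ should lie deep enough in the nef cone of $X$ that the Maclagan--Smith multigraded regularity bounds apply and that each of the shifted bidegrees $(d_1+\sum_j a_j\beta_j,\, d_2)$, $(d_1+1+\sum_j a_j\beta_j,\, d_2)$, and their $d_2+1$ counterparts lies inside the region where the four-point check of Theorem \ref{maintheorem1} is valid.

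The main obstacle is that each $R$-value $H_J(d_1,d_2)$ aggregates information across several $S$-bidegrees at once, so the four $R$-checks are not an immediate translation of a four-point check in $S$. Overcoming this requires showing that the coupled $R$-checks still determine the $S$-pieces individually: no combination of the $\beta$-indexed summands can match $P$ at all four test points without each summand already attaining its expected value. I expect this to follow from multilex or multistable ideal arguments extending Lemma \ref{justbistable}, together with the uniqueness of the decomposition from Lemma \ref{standecomp}.
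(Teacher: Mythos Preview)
Your route differs substantially from the paper's, and the ``main obstacle'' you flag is real and is not resolved by the sketch you give. You take $S$ to be the \emph{same} polynomial ring as $R$ with the standard bigrading, so that a single $R$-degree $(c,d)$ collects contributions from many $S$-bidegrees $(c+\sum_j a_j\beta_j,\,d)$. You then need to argue that the four aggregated $R$-values pin down each $\beta$-summand individually. That is not a formality: the shifts $\sum_j a_j\beta_j$ vary with $\beta$, so for any fixed $d_1$ the standard-graded degrees being tested differ from summand to summand, and there is no reason the Gotzmann/Crona persistence thresholds for all of them are controlled by a single pair $(d_1,d_2)$. The phrase ``I expect this to follow from multilex or multistable ideal arguments extending Lemma~\ref{justbistable}'' does not supply the missing step, and Lemma~\ref{standecomp} is about the bigraded Hilbert polynomial of an ideal in $S$, not about disentangling a sum of shifted single-graded Hilbert functions.

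The paper avoids this entirely by choosing a \emph{different} $S$. In Lemma~\ref{surjhom} one sets $n=\dim_k R_{(1,0)}-1$ and $m=\dim_k R_{(0,1)}-1$ and builds a surjective $k$-algebra map $\psi\colon \cox(\mathbb{P}^n\times\mathbb{P}^m)\twoheadrightarrow \widetilde{R}=\bigoplus_{\boldsymbol{b}\in\mathbb{N}^2}R_{\boldsymbol{b}}$, sending the $x_i$ to a basis of $R_{(1,0)}$ and the $y_j$ to a monomial basis of $R_{(0,1)}$. The point is that $m$ is typically much larger than your $n_2$: each degree-$(0,1)$ monomial of $R$ becomes a \emph{separate} variable of $S$. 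For $I=\ker(S\to\widetilde{R}/\widetilde{J})$ one then has $H_I(b_1,b_2)=H_J(b_1,b_2)$ for \emph{all} $(b_1,b_2)\in\mathbb{N}^2$, so the four-point check for $J$ in $R$ \emph{is} the four-point check for $I$ in $S$, and Theorem~\ref{maintheorem1} applies directly with no disentangling needed. The trade-off is that the paper's $S$ is larger and the bound on $(d_1,d_2)$ it produces may be less sharp than a successful version of your approach would give; but the argument is short and complete.
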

For varieties with higher Picard rank, we obtain the following theorem. In particular, our result still depends solely on the Picard rank of the variety.
\begin{theorem}\label{pichighpers}
Let $X$ be a smooth projective toric variety of Picard rank $s$. Let $R=\cox(X)$, with a $\mathbb{Z}^s$-grading resulting from the identification $\pic(X) \cong \mathbb{Z}^s$. Let $J \subseteq R$ be an ideal, homogeneous with respect to the $\mathbb{Z}^s$-grading. Let $P(t_1,\dots,t_s)$ be a Hilbert polynomial on $R$. Then there exists at most $2^s$ points $(r_1,\dots,r_s) \in \mathbb{N}^{s}$ such that checking $H_J(r_1,\dots,r_s)=P(r_1,\dots,r_s)$ for all of these points guarantees that $P_J=P$. 
\end{theorem}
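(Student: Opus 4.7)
My strategy would be to extend the proof of Theorem \ref{maintheorem1} from the product of projective spaces case to the general smooth projective toric variety setting, handling the fact that the variables of the Cox ring $R$ of $X$ now carry multidegrees $\mathbf{a}_i \in \mathbb{Z}^s \cong \pic(X)$ that need not be standard basis vectors. Throughout, I would also lean on the Picard-rank-2 case (Theorem \ref{picrank2}) as a guide for how the hypercube of test points should behave when the multidegrees are no longer axis-aligned.

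I would follow the three-step pattern of Theorem \ref{maintheorem1}. First, I would generalise Lemma \ref{standecomp} to express a Hilbert polynomial $P(t_1, \ldots, t_s)$ on $\cox(X)$ uniquely as a sum of standard-graded Hilbert polynomials multiplied by binomial coefficients, with one decomposition per coordinate direction of $\pic(X)$. Since the nef cone of a smooth projective toric variety is a full-dimensional rational polyhedral cone in $\pic(X) \otimes \mathbb{R}$, one may choose a basis of $\mathbb{Z}^s$ adapted to the nef cone along which such a decomposition is natural. Second, I would extend the multilex machinery underlying Theorem \ref{maintheorem1} (Lemma \ref{justbistable} together with the results of \cite{crona2006standard}) to $\mathbb{Z}^s$-homogeneous ideals in $\cox(X)$, the central tool again being the existence of suitable strongly stable / lex-segment initial ideals in this more general grading. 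Third, with these two ingredients in hand, iterated application of the classical Gotzmann persistence theorem (Theorem \ref{persistence}) along each of the $s$ coordinate directions yields at most $2^s$ test points in $\mathbb{N}^s$, naturally forming the vertices of a hypercube, whose Hilbert function values determine $P_J$.

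The main obstacle will be the first step, the $\cox(X)$-analogue of Lemma \ref{standecomp}, since the multidegrees $\mathbf{a}_i$ need no longer be pairwise orthogonal standard basis vectors, so the clean inductive-on-factors structure used in the proof of Theorem \ref{maintheorem1} must be replaced by a decomposition that respects the chosen basis of $\pic(X)$ and the shape of the nef cone. A secondary difficulty will be ensuring that the resulting test points lie sufficiently far into the nef cone of $X$, so that Maclagan-Smith multigraded regularity \cite{maclagan2003uniform} allows us to conclude $P_J = P$ throughout the nef cone from polynomial agreement at just these $2^s$ points; this is where the constructive nature of the proof of Theorem \ref{maintheorem1} will be crucial for locating a suitable base point $(d_1, \ldots, d_s)$.
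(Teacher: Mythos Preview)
Your proposal has a genuine gap: the second step, extending the multilex machinery to $\mathbb{Z}^s$-homogeneous ideals in $\cox(X)$ with arbitrary multidegrees, is not known to work and is almost certainly the wrong route. The theory of strongly multistable and multilex ideals in Section~\ref{two}, the partial Stanley decompositions of Lemma~\ref{standec}, and Crona's persistence result (Theorem~\ref{Gotz}) all depend in an essential way on the Cox ring being a polynomial ring whose variables sit in the standard basis degrees $\boldsymbol{e}_i$. For a general smooth projective toric variety the variables have degrees $\boldsymbol{a}_i \in \mathbb{Z}^s$ that can be far from a basis, and there is no analogue of Theorem~\ref{multilex} (existence of a multilex ideal with the same Hilbert function) in that setting. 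Without that, your proposed generalisation of Lemma~\ref{standecomp} has no foundation, and the iterated Gotzmann argument cannot get off the ground.

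The paper sidesteps this entirely. Rather than rebuilding the combinatorics inside $R=\cox(X)$, it \emph{reduces} to the already-proved product case. One chooses a $\mathbb{Q}$-basis $\mathcal{C}=\{\boldsymbol{c}_1,\dots,\boldsymbol{c}_s\}$ of nef line bundles with $\boldsymbol{c}_1 \in \reg(X)$, sets $n_i=\dim_k R_{\boldsymbol{c}_i}-1$, and lets $S=\cox(\mathbb{P}^{n_1}\times\dots\times\mathbb{P}^{n_s})$. The Maclagan--Smith surjectivity result (Theorem~\ref{2surjhom}) then gives, for each $\boldsymbol{b}$ with $b_1\geq 1$, a surjection $S_{\boldsymbol{b}} \twoheadrightarrow R_{f(\boldsymbol{b})}$ where $f(\boldsymbol{b})=\sum b_i\boldsymbol{c}_i$; this is Lemma~\ref{picrankhigh}. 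Pulling $J$ back along this map yields an ideal $I\subseteq S$ with $H_I(\boldsymbol{b})=H_J(f(\boldsymbol{b}))$, so $f^{\#}P_J=P_I$. Now Theorem~\ref{maintheorem1} applies verbatim to $I\subseteq S$, giving a hypercube of $2^s$ test points $\boldsymbol{b}$; their images $f(\boldsymbol{b})$ are the required points for $J$, and injectivity of $f^{\#}$ recovers $P_J=P$. The key idea you are missing is this transfer via global sections of nef line bundles, which replaces any need for a multilex theory in $R$.
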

The $2^{s}$ points of Theorem \ref{pichighpers} form a zonotope, generalising the hypercube seen in Theorems \ref{maintheorem1} and \ref{picrank2}. Again, the Hilbert polynomial of a subscheme of a $d$-dimensional smooth projective toric variety $X$ of Picard rank $s$ can have up to $\binom{s+d}{d}$ coefficients. This means that naively we have to check $\binom{s+d}{d}$ points to find this polynomial. Theorems \ref{picrank2} and \ref{pichighpers} ensure that we only have to check $2^s$ points to find the Hilbert polynomial for any smooth projective toric variety. For $d \geq s$, we have $\binom{s+d}{d} \geq 2^s$ for all $s \geq 1$, and the binomial coefficient also grows faster as $s$ increases. The key contribution is that the complexity of finding the Hilbert polynomial no longer depends on the dimension of $X$, and only depends on the Picard rank.

The rest of this paper is laid out as follows. In Section \ref{two} we outline necessary background on strongly multistable ideals, allowing us to prove results relating to the product of $s$ projective spaces in Section \ref{twohalf}. We conclude Section \ref{twohalf} with a proof of Theorem \ref{maintheorem1} and related remarks. In Section \ref{three} we see how Theorem \ref{maintheorem1} can be applied to smooth projective toric varieties of Picard rank $s \geq 2$, where we prove Theorems \ref{picrank2} and \ref{pichighpers}.

\section*{Acknowledgements}
Many thanks to Diane Maclagan for her continued support throughout this project. The author was funded through the Warwick Mathematics Institute Centre for
Doctoral Training, with support from the University of Warwick and the UK Engineering and Physical Sciences Research Council (EPSRC grant EP/W523793/1).

\section{Strongly multistable ideals}\label{two}
 \subsection{Gotzmann's persistence theorem}

In this section we outline some necessary background to prove Theorem \ref{maintheorem1}. We begin by stating Gotzmann's original persistence theorem. To do so we require the following lemma and definition.
\begin{lemma}[\cite{bruns1998cohen}*{Lemma 4.2.6}]
Fix $d \in \mathbb{Z}_{>0}$ and $\alpha \in \mathbb{N}$. Then $\alpha$ can be written uniquely in the form
\begin{equation*}
    \alpha=\binom{\kappa(d)}{d}+\binom{\kappa(d-1)}{d-1}+\dots+\binom{\kappa(1)}{1},
\end{equation*}
where $\kappa(d)>\kappa(d-1)>\dots>\kappa(1)\geq0$.
\end{lemma}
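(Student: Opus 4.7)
My plan is to prove both existence and uniqueness via a greedy/descending induction on $d$, using Pascal's identity as the key combinatorial tool.

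For existence, I would define $\kappa(d)$ to be the largest integer such that $\binom{\kappa(d)}{d}\leq \alpha$; this is well-defined since $\binom{d}{d}=1$ and $\binom{n}{d}\to\infty$ as $n\to\infty$ (if no positive such $\kappa(d)$ exists because $\alpha<1=\binom{d}{d}$, i.e.\ $\alpha=0$, I take $\kappa(d)=d-1$, so $\binom{\kappa(d)}{d}=0$). Set $\alpha'=\alpha-\binom{\kappa(d)}{d}\geq 0$ and apply the same construction at index $d-1$ to produce $\kappa(d-1),\dots,\kappa(1)$. The crucial inequality to verify is $\kappa(d-1)<\kappa(d)$, which follows from the maximality of $\kappa(d)$: by definition $\alpha<\binom{\kappa(d)+1}{d}$, so by Pascal's identity
\begin{equation*}
\alpha'=\alpha-\binom{\kappa(d)}{d}<\binom{\kappa(d)+1}{d}-\binom{\kappa(d)}{d}=\binom{\kappa(d)}{d-1},
\end{equation*}
and hence the greedy choice at level $d-1$ yields $\kappa(d-1)<\kappa(d)$.

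For uniqueness, I would show that in \emph{any} admissible representation, the top index must coincide with the greedy choice above; then recursion on $d$ finishes. Suppose $\alpha=\sum_{j=1}^{d}\binom{\kappa'(j)}{j}$ with $\kappa'(d)>\kappa'(d-1)>\dots>\kappa'(1)\geq 0$. The key upper bound on the tail, obtained by iteratively applying Pascal's identity and telescoping, is
\begin{equation*}
\sum_{j=1}^{d-1}\binom{\kappa'(j)}{j}\leq\sum_{j=1}^{d-1}\binom{\kappa'(d)-1-(d-1-j)}{j}=\binom{\kappa'(d)}{d-1}-\binom{\kappa'(d)-d+1}{0}<\binom{\kappa'(d)}{d-1},
\end{equation*}
where the equality comes from the identity $\binom{n}{k}=\binom{n-1}{k}+\binom{n-1}{k-1}$ applied recursively. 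Therefore
\begin{equation*}
\alpha<\binom{\kappa'(d)}{d}+\binom{\kappa'(d)}{d-1}=\binom{\kappa'(d)+1}{d},
\end{equation*}
which together with $\alpha\geq\binom{\kappa'(d)}{d}$ forces $\kappa'(d)$ to equal the greedy $\kappa(d)$. Subtracting this term and invoking the inductive hypothesis at level $d-1$ gives uniqueness of the remaining $\kappa'(j)$.

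The main obstacle I anticipate is the bookkeeping of boundary cases, namely handling $\alpha=0$ (or more generally the tail becoming zero before reaching $j=1$), where some of the $\kappa(j)$ fall below $j$ so that the corresponding binomial contributes $0$; one has to verify that the descending-chain condition $\kappa(j)>\kappa(j-1)\geq 0$ can still be maintained by a uniform convention (for instance, padding with $\kappa(j)=j-1$). Aside from this, the argument is a direct unwinding of Pascal's identity.
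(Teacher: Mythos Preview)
The paper does not give a proof of this lemma; it is quoted directly from \cite{bruns1998cohen}*{Lemma 4.2.6} as background and used without further justification. Your argument is the standard greedy proof (essentially the one in Bruns--Herzog), and it is correct, including the handling of the boundary case via the convention $\kappa(j)=j-1$ when the remainder vanishes.
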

\begin{definition}
Let $\alpha \in \mathbb{N}$, $d \in \mathbb{Z}_{>0}$. Define
\begin{equation*}
    \alpha^{\left<d\right>}= \binom{\kappa(d)+1}{d+1}+\binom{\kappa(d-1)+1}{d-1+1}+\dots +\binom{\kappa(1)+1}{1+1},
\end{equation*}
with $\kappa(d),\dots,\kappa(1)$ as above. We set $0^{\left<d\right>}=0$.
\end{definition}
A key result of Macaulay~\cite{macaulay1927some} shows that for $b \in \mathbb{Z}_{>0}$ we have
\[H_I(b+1) \leq H_I(b)^{\left<b\right>}.\]
We now state Gotzmann's persistence theorem.
\begin{theorem}[\cite{gotzmann1978bedingung}, see also~\cite{bruns1998cohen} Theorem 4.3.3]\label{persistence}
    Let $S=k[x_0,\dots,x_n]$ be a standard-graded polynomial ring, and let $I \subseteq S$ be a homogeneous ideal generated in degrees $\leq d$. Suppose $H_I(d+1) = H_I(d)^{\left<d\right>}$. Then $H_I(b+1)=H_I(b)^{\left< b \right>}$ for all $b \geq d$.
\end{theorem}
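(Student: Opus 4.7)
The plan is to reduce Gotzmann's persistence to a tautology for lex-segment ideals, and then transfer the conclusion back to $I$. First I would pass to a strongly stable (Borel-fixed) monomial ideal with the same Hilbert function as $I$ in degrees $\geq d$. Concretely, since $I$ is generated in degrees $\leq d$ one may replace $I$ by the ideal $(I_d)$, take the generic initial ideal with respect to the reverse lexicographic order (which preserves the Hilbert function), and truncate again at degree $d$. After these reductions I may assume $I$ is a strongly stable monomial ideal generated in degree $d$, and it suffices to prove the persistence conclusion for this replacement.

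Next I would introduce the unique lex-segment ideal $L$ with $H_L(d) = H_I(d)$, whose existence is guaranteed by Macaulay's classification of Hilbert functions. A direct binomial-coefficient calculation from the combinatorial description of lex-segments shows $H_L(b+1) = H_L(b)^{\langle b \rangle}$ for every $b \geq d$, so persistence is tautological for $L$. The assumption of maximal growth for $I$, combined with Macaulay's bound applied to $L$, forces $H_L(d+1) = H_I(d+1)$, so the problem reduces to the single inequality $H_I(b) \geq H_L(b)$ for all $b \geq d$ (the reverse inequality being Macaulay's bound propagated from degree $d$).

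I would establish this remaining comparison by induction on the number of variables $n$, the base case $n = 0$ being trivial. Quotienting by a sufficiently generic linear form $\ell$ yields the short exact sequence
\[ 0 \to (S/I)(-1) \xrightarrow{\ell} S/I \to S/(I,\ell) \to 0, \]
and Green's hyperplane restriction theorem shows that the maximal-growth hypothesis transfers to an analogous maximal-growth condition for $S/(I,\ell)$ in one fewer variable. The inductive hypothesis then propagates this condition to all higher degrees in the quotient, and the exact sequence lifts the information back to $S/I$ by controlling the rank of multiplication by $\ell$ degree by degree. The principal obstacle is precisely this hyperplane-section step: both the descent of maximal growth to the quotient and the ascent of the resulting equality back to $S/I$ rest on delicate rank estimates for multiplication by $\ell$, which is the technical heart of the argument and explains why Gotzmann's persistence is historically presented alongside Green's hyperplane restriction theorem.
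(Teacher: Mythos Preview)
The paper does not include its own proof of this theorem. It is stated as a known result, with citations to Gotzmann's original paper and to Bruns--Herzog, and is used as a black box in the later arguments. There is therefore nothing in the paper to compare your proposal against.

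That said, your outline is a recognisable sketch of Green's approach to persistence via the hyperplane restriction theorem, which is one of the standard routes and is essentially the argument reproduced in Bruns--Herzog. The reductions to a strongly stable ideal generated in degree $d$ and the introduction of the lex-segment ideal $L$ with $H_L(d)=H_I(d)$ are correct, as is the observation that once one knows $H_L(b+1)=H_L(b)^{\langle b\rangle}$ for all $b\geq d$, the problem collapses to the single inequality $H_I(b)\geq H_L(b)$, the reverse inequality being Macaulay's bound iterated from degree $d$. You rightly flag the hyperplane-section step as the delicate point; in a full write-up this is precisely where one invokes Green's theorem to show that maximal growth in degree $d$ forces the restriction $S/(I,\ell)$ also to exhibit maximal growth in one fewer variable, so that the induction can run. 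As written, however, your proposal is a plan rather than a proof: the key claims about how maximal growth descends to the quotient and how the resulting equalities lift back through the exact sequence are asserted rather than verified, and those verifications are exactly where the content of the theorem lies.
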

As highlighted in the introduction, we can relate this theorem to the Hilbert polynomial of $I$. In general, any Hilbert polynomial $P(t) \in \mathbb{Q}[t]$ of a standard-graded ideal can be written in the form
\begin{equation*}
    P(t) = \sum_{i=1}^{D} \binom{t+\alpha_i-i+1}{t-i+1}
\end{equation*}
for some $D \in \mathbb{Z}_{>0}$ and $\alpha_1 \geq \alpha_2 \geq \dots \geq \alpha_D \geq 0$. This number $D$ is called the Gotzmann number of $P(t)$. For $t \geq D-1$, we observe that $P(t+1)=P(t)^{\left<t\right>}$, which we expect from Theorem \ref{persistence}. 
\subsection{Strongly multistable ideals}\label{twotwo}
We now outline some necessary background on strongly multistable ideals. Throughout the rest of Sections \ref{two} and \ref{twohalf} let \[S = k[x_{1,0},\dots,x_{1,n_1},\dots,x_{s,0},\dots,x_{s,n_s}]\] be the Cox ring of the product of $s$ projective spaces $\mathbb{P}^{n_1} \times \dots \times \mathbb{P}^{n_s}$, with $\deg(x_{i,j})=\boldsymbol{e}_i \in \mathbb{Z}^s$. We give $S$ the degree lexicographic monomial ordering induced by $x_{1,0} \succ \dots \succ x_{1,n_1} \succ \dots \succ x_{s,0} \succ \dots \succ x_{s,n_s}$. More succinctly, we have $x_{i,j} \succ x_{k,l}$ if $i<k$, or $i=k$ and $j<l$. We use $\boldsymbol{x^u}$ as shorthand for a monomial in $S$, with $\boldsymbol{x^u}=\prod x_{i,j}^{u_{i,j}}$ for some $u_{i,j} \in \mathbb{N}$.

We also set the following piece of notation. An ideal $I$ is generated in degrees $\leq (a_1,\dots,a_s) \in \mathbb{N}^s$ if there exist homogeneous generators $\{f_1,\dots,f_r\}$ for $I$ such that for each $(d_{i1},\dots,d_{is}) = \deg(f_i)$, we have $d_{ij} \leq a_j$.
\begin{definition}\label{multistab}
    A monomial ideal $I \subseteq S$ is strongly multistable if for all monomials $\boldsymbol{x^u} \in I$, $1 \leq i \leq s$ and $0 \leq j \leq n_i$, we have that if $x_{i,j} \mid \boldsymbol{x^u}$ then $x_{i,k} \tfrac{\boldsymbol{x^u}}{x_{i,j}} \in I$ for all $0 \leq k < j$.
\end{definition}

\begin{definition}
    For a monomial $\boldsymbol{x^u} \in S$ define
    \begin{align*}
    m^i(\boldsymbol{x^u}) & = \max\{j \colon x_{i,j} \mid \boldsymbol{x^u}\},\\ 
    m_i(\boldsymbol{x^u}) & = \min\{j \colon x_{i,j} \mid \boldsymbol{x^u}\}.
    \end{align*}
\end{definition}

For every homogeneous ideal of $S$ there exists a strongly multistable ideal with the same Hilbert function. One key example is the multigeneric initial ideal, $\multigin(I)$ as outlined in~\cite{conca2022radical}. Work of Crona~\cite{crona2006standard} focuses on the specific case of strongly bistable ideals, which lie in the Cox ring of $\mathbb{P}^n \times \mathbb{P}^m$, which we denote $T$. In this case the bigeneric initial ideal was first defined in \cite{aramova2000bigeneric}.

Lemma 2.4 of~\cite{crona2006standard} leads to a partial decomposition of the quotient ring $T/I$ for a strongly bistable ideal $I \subseteq T$, with similarities to Stanley decompositions. This partial decomposition allows us to understand the Hilbert function of a strongly bistable ideal. We can generalise this lemma to the strongly multistable case, with the proof being a direct generalisation of Crona's proof.

\begin{lemma}\label{standec}
    Let $I \subseteq S$ be a strongly multistable ideal generated in degrees $\leq ( a_1, \dots, a_s)$. For a monomial $\boldsymbol{x^u} \in S_{(a_1,\dots,a_s)}$ and integers $\{v_i \geq a_i\}$ define
    \[H(\boldsymbol{x^u}) = \{\boldsymbol{x^u} \boldsymbol{x^v} \mid \boldsymbol{x^v} \in  S_{(v_1-a_1,\dots,v_s-a_s)}, m_i(\boldsymbol{x^v}) \geq m^i(\boldsymbol{x^u})\}.\]
    Then  $H = \bigcup_{\boldsymbol{x^u} \in I_{(a_1,\dots,a_s)}} H(\boldsymbol{x^u})$ generates the vector space $I_{(v_1,\dots,v_s)}$. Moreover $H(\boldsymbol{x^u}) \cap H(\boldsymbol{x^{u'}}) = \emptyset$ for $\boldsymbol{x^u} \neq \boldsymbol{x^{u'}}$, so $H$ is a monomial basis for $I_{(v_1,\dots,v_s)}$.
\end{lemma}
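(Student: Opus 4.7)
The proof will closely mirror Crona's argument in the bigraded case (\cite{crona2006standard}*{Lemma 2.4}), with each projective factor treated independently. There are two claims to verify: that $H$ spans $I_{(v_1,\dots,v_s)}$, and that the pieces $H(\boldsymbol{x^u})$ are pairwise disjoint.

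For the spanning statement, my plan is to give an explicit construction. Fix a monomial $\boldsymbol{x^w} \in I_{(v_1,\dots,v_s)}$. For each block $i$, list the variables of $\boldsymbol{x^w}$ in block $i$ as $x_{i,j_1}^{e_1} \cdots x_{i,j_{r_i}}^{e_{r_i}}$ with $j_1 < \cdots < j_{r_i}$ and $\sum_k e_k = v_i$. Choose the unique threshold index $K_i$ satisfying $\sum_{k < K_i} e_k < a_i \leq \sum_{k \leq K_i} e_k$, and set $f_{K_i} = a_i - \sum_{k < K_i} e_k$, $f_k = e_k$ for $k < K_i$, $f_k = 0$ for $k > K_i$. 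Let $\boldsymbol{x^u}$ be the monomial with exponents $f_k$ in each block. By construction $\boldsymbol{x^u} \mid \boldsymbol{x^w}$, $\deg(\boldsymbol{x^u}) = (a_1,\dots,a_s)$, and the residual $\boldsymbol{x^v} = \boldsymbol{x^w}/\boldsymbol{x^u}$ satisfies $m_i(\boldsymbol{x^v}) \geq m^i(\boldsymbol{x^u})$ in every block.

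The main technical step is showing $\boldsymbol{x^u} \in I$. Since $I$ is generated in degrees $\leq (a_1,\dots,a_s)$ and $\boldsymbol{x^w} \in I$, there exists a monomial generator $\boldsymbol{x^g}$ of $I$ dividing $\boldsymbol{x^w}$ with $\deg(\boldsymbol{x^g}) \leq (a_1,\dots,a_s)$. Multiply $\boldsymbol{x^g}$ by any monomial $\boldsymbol{x^c}$ dividing $\boldsymbol{x^w}/\boldsymbol{x^g}$ of complementary degree $(a_1-g_1,\dots,a_s-g_s)$ to obtain $\boldsymbol{x^{u'}} \in I_{(a_1,\dots,a_s)}$ with $\boldsymbol{x^{u'}} \mid \boldsymbol{x^w}$. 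To convert $\boldsymbol{x^{u'}}$ into our specific $\boldsymbol{x^u}$, I would iteratively apply strong multistability: if some variable $x_{i,j}$ divides $\boldsymbol{x^{u'}}$ while a strictly smaller-indexed variable $x_{i,k}$ (with $k < j$) still divides $\boldsymbol{x^w}/\boldsymbol{x^{u'}}$, swap them to produce a new element $x_{i,k}\boldsymbol{x^{u'}}/x_{i,j} \in I$ still dividing $\boldsymbol{x^w}$. This process terminates (each swap strictly lowers the multiset of indices used in $\boldsymbol{x^{u'}}$ in some well-founded order, e.g.\ lexicographic), and the terminal monomial must coincide with $\boldsymbol{x^u}$ since $\boldsymbol{x^u}$ is characterised as the unique degree-$(a_1,\dots,a_s)$ divisor of $\boldsymbol{x^w}$ using the smallest-indexed variables available in each block.

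The disjointness claim follows from the same uniqueness observation: if $\boldsymbol{x^w} \in H(\boldsymbol{x^u}) \cap H(\boldsymbol{x^{u'}})$, then both $\boldsymbol{x^u}$ and $\boldsymbol{x^{u'}}$ have degree $(a_1,\dots,a_s)$, divide $\boldsymbol{x^w}$, and their residuals use indices no smaller than their own top index in each block. As argued above, the threshold $K_i$ and value $f_{K_i}$ are forced by the degree constraint $\sum f_k = a_i$ in each block, so $\boldsymbol{x^u} = \boldsymbol{x^{u'}}$. The main obstacle is the termination and correctness of the swapping procedure in the second paragraph; the bookkeeping is straightforward block by block, but one must check that swaps never move a variable out of the support of the dividend $\boldsymbol{x^w}$, which is guaranteed because the swapped-in variable $x_{i,k}$ was chosen to already divide $\boldsymbol{x^w}/\boldsymbol{x^{u'}}$.
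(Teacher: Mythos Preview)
Your proof is correct and follows essentially the same route as the paper: both arguments use strong multistability to swap variables between the two factors until the $m_i(\boldsymbol{x^v}) \geq m^i(\boldsymbol{x^u})$ condition holds, and both establish disjointness by observing that the degree-$(a_1,\dots,a_s)$ factor is determined by the exponents at the small indices. The only difference is organizational---you first identify the canonical ``greedy'' divisor $\boldsymbol{x^u}$ and swap towards it, whereas the paper starts from an arbitrary factorization $\boldsymbol{x^u}\boldsymbol{x^v}$ with $\boldsymbol{x^u} \in I_{(a_1,\dots,a_s)}$ and swaps until the index condition is met---but the underlying mechanism and termination argument are the same.
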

\begin{proof}
    To show that $H$ generates $I_{(v_1,\dots,v_s)}$ as a $k$-vector space, we show that for any monomial \[\boldsymbol{x^u} \in I_{(a_1,\dots,a_s)}\] and monomial $\boldsymbol{x^v} \in S_{(a_1-v_1,\dots,a_s-v_s)}$, then $\boldsymbol{x^u} \boldsymbol{x^v}$ lies in $H$. Suppose $m_i(\boldsymbol{x^v}) < m^i(\boldsymbol{x^u})$ for some $i$. It follows from the strongly multistable property that \[\boldsymbol{x}^{\boldsymbol{u}_{2}}= x_{i,m_i(\boldsymbol{x^v})} \tfrac{\boldsymbol{x^u}}{x_{i,m^i(\boldsymbol{x^u})}} \in I_{(a_1,\dots,a_s)}.\] Further if we let \[\boldsymbol{x}^{\boldsymbol{v}_{2}}=x_{i,m^i(\boldsymbol{x^u})}\tfrac{\boldsymbol{x^v}}{x_{i,m_i(\boldsymbol{x^v})}}\] then $\boldsymbol{x^u}\boldsymbol{x^v}=\boldsymbol{x}^{\boldsymbol{u}_{2}}\boldsymbol{x}^{\boldsymbol{v}_{2}}$, $m_i(\boldsymbol{x}^{\boldsymbol{v}_{2}}) \geq m_i(\boldsymbol{x^v})$ and $m^i(\boldsymbol{x}^{\boldsymbol{u}_{2}}) \leq m^i(\boldsymbol{x^u})$. We now repeat the above process for $\boldsymbol{x}^{\boldsymbol{u}_{2}}$ and $\boldsymbol{x}^{\boldsymbol{v}_{2}}$. If $k$ is the maximum power of $x_{i,{m_i({\boldsymbol{x^v}}})}$ dividing $\boldsymbol{x^v}$ then $k$ repeats is enough to ensure that $m_i(\boldsymbol{x}^{\boldsymbol{v}_{k}}) >m_i(\boldsymbol{x^{v}})$. It follows that after a finite number $n$ of repeats $m_i(\boldsymbol{x}^{\boldsymbol{v}_{n}}) \geq m^i(\boldsymbol{x}^{\boldsymbol{u}_{n}})$. Thus we can assume $m_i(\boldsymbol{x^v}) \geq m^i(\boldsymbol{x^u})$ for all $i$, completing the first part of the proof. To see that $H(\boldsymbol{x^u}) \cap H(\boldsymbol{x^{u'}}) = \emptyset$ for $\boldsymbol{x^u} \neq \boldsymbol{x^{u'}}$, let \[\boldsymbol{x^u}=c_1 \dots c_s, \quad \boldsymbol{x^{u'}}=c_1' \dots c_s'\] with $c_i, c_i'$ monomials in $S_{a_i \boldsymbol{e}_i}$. Suppose $\boldsymbol{x^u} \neq \boldsymbol{x^{u'}}$. Then without loss of generality there exists a pair $\{c_i,c_i'\}$ with $c_i \succ c_i'$. Let $j$ be the smallest index such that the power of $x_{i,j}$ is larger for $c_i$ than $c_i'$. Note that $m^i(\boldsymbol{x^{u'}})=m^i(c_i')>j$ since $c_i$ and $c_i'$ have equal degree. It therefore follows that for a monomial $\boldsymbol{x^v} \in H(\boldsymbol{x^u})$ the power of $x_{i,j}$ dividing $\boldsymbol{x^v}$ is strictly larger than for any monomial lying in $H(\boldsymbol{x^{u'}})$. Thus $H(\boldsymbol{x^u}) \cap H(\boldsymbol{x^{u'}}) = \emptyset$.
\end{proof}
Note that \[\bigcup_{\boldsymbol{x^u} \in S_{(a_1,\dots,a_s)}} H(\boldsymbol{x^u})\] partitions the monomials of degree $(v_1,\dots,v_s)$. Further, \[\bigcup_{\boldsymbol{x^u} \in I_{(a_1,\dots,a_s)}} H(\boldsymbol{x^u})\] and 
\begin{equation}\label{parti}
\bigcup_{\boldsymbol{x^u} \in S_{(a_1,\dots,a_s)} \setminus I_{(a_1,\dots,a_s)}} H(\boldsymbol{x^u})
\end{equation}
partition the monomials forming a basis for $I_{(v_1,\dots,v_s)}$ and $(S/I)_{(v_1,\dots,v_s)}$ respectively. For $\sigma=\{c_1,\dots,n_1,c_2,\dots,n_2,\dots,c_s,\dots,n_s\}$ let \[S_{\sigma} = k[x_{i,j} \mid c_i \leq j \leq n_i].\] Then the partition \eqref{parti} shows that for \[\sigma_{\boldsymbol{u}} = \{m_1(\boldsymbol{x^u}), \dots, n_1, m_2(\boldsymbol{x^u}), \dots, n_2, \dots, m_s(\boldsymbol{x^u}), \dots, n_s\},\]
\[\bigoplus_{\boldsymbol{x^u} \in S_{(a_1,\dots,a_s)} \setminus I_{(a_1,\dots,a_s)}} \boldsymbol{x^u} S_{\sigma_{\boldsymbol{u}}}\] is a decomposition of $(S/I)_{(\geq a_1,\dots,\geq a_s)}$. This leads to the following definition.
\begin{definition}
    Let $I \subseteq S$ be a strongly multistable ideal generated in degrees $\leq (a_1, \dots, a_s)$. A partial Stanley decomposition of $S/I$ is a set \[\mathfrak{S}_{(a_1,\dots,a_s)}=\{(\boldsymbol{x^u},\sigma_{\boldsymbol{u}}) \mid \boldsymbol{x^u} \text{ is a monomial in } S_{(a_1,\dots,a_s)} \setminus I_{(a_1,\dots,a_s)}\}.\]
    We have 
    \[\dimn_k((S/I)_{(b_1,\dots,b_s)}) = \sum_{\mathfrak{S}_{(a_1,\dots,a_s)}} \dimn_k((S_{{\sigma}_{\boldsymbol{u}}})_{(b_1-a_1,\dots,b_s-a_s)}) \] for $(b_1,\dots,b_s) \in \mathbb{N}^s$ with $b_i \geq a_i$ for all $i$.
\end{definition}
This partial Stanley decomposition therefore allows us to calculate the Hilbert function of $S/I$ in sufficiently high degrees. By passing from an ideal $I$ to $\multigin(I)$ we can apply this idea to more general ideals.
\begin{exmp}
    Let $ T = k[x_0,x_1,y_0,y_1]$ be the Cox ring of $\mathbb{P}^1 \times \mathbb{P}^1$, and let $I=(x_0,x_1y_0) \subseteq T$ be an ideal. Since $I$ is strongly bistable and generated in degrees $\leq(2,2)$ we can construct a partition using the monomials of degree $(2,2)$. The only monomial in $T_{(2,2)}\setminus I_{(2,2)}$ is $\boldsymbol{x^u}=x_1^2y_1^2$. Here $m_x(\boldsymbol{x^u})=1=m_y(\boldsymbol{x^u})$ and $\sigma_{\boldsymbol{u}}=\{1,1\}$. Therefore $\mathfrak{S}_{(2,2)}=(x_1^2y_1^2,\{1,1\})$, and \[\dimn_k((T/I)_{(b_1,b_2)})=\dimn_k(k[x_1,y_1]_{(b_1-2,b_2-2)})\] for $b_1, b_2 \geq 2$.
\end{exmp}

\subsection{Multilex ideals}
Aramova, Crona and De Negri \cite{aramova2000bigeneric}*{Section 4} give a generalisation of lexicographic ideals for the bigraded case, called bilex ideals, and prove a number of useful results. As highlighted by Aramova, Crona and De Negri, these results generalise further to the multigraded case, with similar proofs. For completeness we will include a formal definition of multilex ideals, and a proof of the generalised version of one of their main theorems.
\begin{definition}\label{xlex}
    Let $M \subseteq S_{(a_1,\dots,a_s)}$ be a set of monomials. Then:
    \begin{enumerate}[\normalfont(i)]
        \item $M$ is $x_i$-lex if for all $\boldsymbol{x^u} \in S_{a_i\boldsymbol{e}_i}$, $\boldsymbol{x^v} \in S_{(a_1,\dots,a_{i-1},0,a_{i+1},\dots,a_s)}$ we have that $\boldsymbol{x^u} \boldsymbol{x^v} \in M$ implies that $\boldsymbol{x^{u'}} \boldsymbol{x^v} \in M$ for all $\boldsymbol{x^{u'}} \in S_{a_i\boldsymbol{e}_i}$ with $\boldsymbol{x^{u'}} \succ \boldsymbol{x^u}$.
        \item $M$ is multilex if it is $x_i$-lex for all $1 \leq i \leq s$. It is $x_1,\dots,x_i$-lex if it is $x_j$-lex for $1 \leq j \leq i$.
        \item A monomial ideal $I \subseteq S$ is multilex if $I_{(a_1,\dots,a_s)}$ is generated by a multilex set for all $(a_1,\dots,a_s) \in \mathbb{N}^s$.
        \end{enumerate}
\end{definition}
Observe that a multilex ideal is always strongly multistable. In the bigraded case, where $s=2$, Aramova Crona and De Negri \cite{aramova2000bigeneric}*{Lemma 4.13} show that for every strongly bistable ideal $I$, there is a multilex ideal $I^{\text{bilex}}$ with the same Hilbert function. We verify that this result can be extended to the strongly multistable case. To do so, we will need the following definitions.
\begin{definition}
    For a set of monomials $M \subseteq S_{(a_1,\dots,a_s)}$ we can decompose \[M = \bigcup_{j=1}^{k} M_j \boldsymbol{x}^{\boldsymbol{u}_j}\] for some $\boldsymbol{x}^{\boldsymbol{u}_j} \in S_{(a_1,\dots,a_{i-1},0,a_{i+1},\dots,a_s)}$ and $M_j \subseteq S_{a_i\boldsymbol{e}_i}$. Define \[M^{x_i \text{lex}} = \bigcup_{j=1}^{k} M_j^{\text{lex}} \boldsymbol{x}^{\boldsymbol{u}_j}.\] Let $M^{x_1 \dots x_i \text{lex}} = (((M^{x_1 \text{lex}})^{x_2 \text{lex}})^{\dots})^{x_i \text{lex}}$.
\end{definition}
\begin{remark}\label{subseteqs}
    For any sets of monomials $A \subseteq B \subseteq S_{(a_1,\dots,a_s)}$, and any $1 \leq i \leq s$, $A^{x_i \text{lex}} \subseteq B^{x_i \text{lex}}$.
\end{remark}

\begin{definition} Given a monomial $\boldsymbol{x^u} \in S_{(a_1,\dots,a_{s})}$, write $\boldsymbol{x^u}=\boldsymbol{x_1^{u_1}} \ldots \boldsymbol{x_s^{u_s}}$, with each $\boldsymbol{x_i^{u_i}} \in S_{a_i\boldsymbol{e}_i}$. Define the multi-lexsegment of $\boldsymbol{x^u}$ by 
    \[L(\boldsymbol{x^u}) = \{\boldsymbol{x_1^{u'_1}} \ldots \boldsymbol{x_{s}^{u'_{s}}} \mid \boldsymbol{x_j^{u'_j}} \in S_{\boldsymbol{a_j\boldsymbol{e_j}}}, \quad \boldsymbol{x_j^{u'_j}} \succeq \boldsymbol{x_j^{u_j}} \text{ for all } j\}.\]
\end{definition}
\begin{remark}
For a set of monomials $M \subseteq S_{(a_1,\dots,a_s)}$, $M$ is $x_1,\dots,x_i$-lex if and only if for all $\boldsymbol{x^u}\boldsymbol{x^v} \in M$, with $\boldsymbol{x^u} \in S_{(a_1,\dots,a_i,0,\dots,0)}$, $\boldsymbol{x^v} \in S_{(0,\dots,0,a_{i+1},\dots,a_s)}$, we have $L(\boldsymbol{x^u})\boldsymbol{x^v} \subseteq M$.
\end{remark}
For the rest of this section we focus on the case that $M$ is a strongly multistable set of monomials. We begin by generalising \cite{aramova2000bigeneric}*{Theorem 4.8}.
\begin{lemma}\label{ismultilex}
    Suppose the strongly multistable set $M$ is $x_1,\dots,x_{i-1}$-lex for some $2 \leq i \leq s$. Then $M^{x_i \text{lex}}$ is $x_1,\dots,x_i$-lex.
\end{lemma}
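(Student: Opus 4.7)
The plan is to check the two conditions comprising ``$x_1,\dots,x_i$-lex'' separately. The $x_i$-lex property is immediate from the definition of $M^{x_i \text{lex}}$, since each slice $M_j$ has been replaced by its lex initial segment $M_j^{\text{lex}}$ in the same ambient space $S_{a_i \boldsymbol{e}_i}$. The content of the lemma is therefore the preservation of the $x_j$-lex property for each $j<i$, which I would prove by exploiting how the decomposition $M=\bigcup_j M_j \boldsymbol{x}^{\boldsymbol{u}_j}$ interacts with raising the factor $\boldsymbol{x}^{\boldsymbol{u}_j}$ in the $x_j$-direction.

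To that end, fix $j<i$ and an arbitrary element $\boldsymbol{x^v} \boldsymbol{x^w} \in M^{x_i \text{lex}}$ with $\boldsymbol{x^v} \in S_{a_j \boldsymbol{e}_j}$, and let $\boldsymbol{x^{v'}} \succ \boldsymbol{x^v}$ be another monomial in $S_{a_j \boldsymbol{e}_j}$. I would split $\boldsymbol{x^w} = m \cdot \boldsymbol{x^{u''}}$, where $m \in S_{a_i \boldsymbol{e}_i}$ and $\boldsymbol{x^{u''}}$ involves none of the $x_i$- or $x_j$-variables. Setting $\boldsymbol{x}^{\boldsymbol{u}_\ell} := \boldsymbol{x^v} \boldsymbol{x^{u''}}$ and $\boldsymbol{x}^{\boldsymbol{u}_{\ell'}} := \boldsymbol{x^{v'}} \boldsymbol{x^{u''}}$, both of which are monomials in the complementary degree $(a_1,\dots,a_{i-1},0,a_{i+1},\dots,a_s)$, turns the required assertion into showing that $m \in M_{\ell'}^{\text{lex}}$ given $m \in M_\ell^{\text{lex}}$.

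The crucial step is to observe that the $x_j$-lex hypothesis on $M$ produces an inclusion $M_\ell \subseteq M_{\ell'}$. Indeed, any $m' \in M_\ell$ satisfies $m' \boldsymbol{x}^{\boldsymbol{u}_\ell} \in M$, and applying $x_j$-lexness of $M$ with $\boldsymbol{x^{v'}} \succ \boldsymbol{x^v}$ gives $m' \boldsymbol{x}^{\boldsymbol{u}_{\ell'}} \in M$, i.e.\ $m' \in M_{\ell'}$. The lex initial segments of $S_{a_i \boldsymbol{e}_i}$ are linearly ordered by inclusion (uniquely determined by their cardinality), so the inequality $|M_\ell| \leq |M_{\ell'}|$ forces $M_\ell^{\text{lex}} \subseteq M_{\ell'}^{\text{lex}}$; this last inclusion is exactly Remark~\ref{subseteqs} applied inside $S_{a_i \boldsymbol{e}_i}$. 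Hence $m \in M_{\ell'}^{\text{lex}}$ and $\boldsymbol{x^{v'}} \boldsymbol{x^w} = m \cdot \boldsymbol{x}^{\boldsymbol{u}_{\ell'}} \in M^{x_i \text{lex}}$, as needed.

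The main obstacle I anticipate is notational rather than conceptual: a single monomial in $S_{(a_1,\dots,a_s)}$ has to be simultaneously decomposed into an $x_i$-component, an $x_j$-component, and a ``remainder'', and one must track that the $x_i$-lex operation (which alters only the $x_i$-component of each element) commutes correctly with the $x_j$-lex test (which compares $x_j$-components). Once this bookkeeping is pinned down, the argument collapses to the short cardinality comparison of lex initial segments described above, and the strongly multistable hypothesis on $M$ is used only indirectly, through the $x_j$-lexness that it refines.
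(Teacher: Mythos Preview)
Your proof is correct and follows essentially the same approach as the paper: both reduce to showing that the $x_1,\dots,x_{i-1}$-lex hypothesis on $M$ forces inclusions between the $S_{a_i\boldsymbol{e}_i}$-slices, which then pass to their lex initial segments. The only organizational difference is that the paper groups the indices $1,\dots,i-1$ together and verifies the multi-lexsegment criterion $L(\boldsymbol{x}^{\boldsymbol{u}_j}\boldsymbol{x^w})\boldsymbol{x}^{\boldsymbol{v}_k}\subseteq M^{x_i\text{lex}}$ in one step, whereas you check each $x_j$-lex property for $j<i$ separately; both are valid and the underlying slice-inclusion argument is identical.
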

\begin{proof}
    The proof is essentially the same as \cite{aramova2000bigeneric}*{Theorem 4.8}. We decompose $M$ as follows:
    \[M = \bigcup_{j=1}^{l_1} \bigcup_{k=1}^{l_2} \boldsymbol{x}^{\boldsymbol{u}_j}M_{j,k}\boldsymbol{x}^{\boldsymbol{v}_{k}}\]
    where $\boldsymbol{x}^{\boldsymbol{u}_j} \in S_{(a_1,\dots,a_{i-1},0,\dots,0)}$, $M_{j,k} \subseteq S_{a_i\boldsymbol{e}_i}$ and $\boldsymbol{x}^{\boldsymbol{v}_{k}} \in S_{(0,\dots,0,a_{i+1},\dots,a_s)}$. We will order the $\boldsymbol{x}^{\boldsymbol{u}_j}$ and $\boldsymbol{x}^{\boldsymbol{v}_{k}}$ such that $\boldsymbol{x}^{\boldsymbol{u}_{j'}} \succ \boldsymbol{x}^{\boldsymbol{u}_{j}}$ if and only if $j' < j$, and similar for the $\boldsymbol{x}^{\boldsymbol{v}_{k}}$. Observe that 
    \[M^{x_i \text{lex}} = \bigcup_{j=1}^{l_1} \bigcup_{k=1}^{l_2} \boldsymbol{x}^{\boldsymbol{u}_j}M_{j,k}^{\text{lex}}\boldsymbol{x}^{\boldsymbol{v}_{k}}.\] An element in $M^{x_i \text{lex}}$ can therefore be written as $\boldsymbol{x}^{\boldsymbol{u}_j}\boldsymbol{x^w}\boldsymbol{x}^{\boldsymbol{v}_{k}}$, with $\boldsymbol{x}^{\boldsymbol{u}_j} \in S_{(a_1,\dots,a_{i-1},0,\dots,0)}$, $\boldsymbol{x^w} \in M_{j,k}^{\text{lex}}$ and $\boldsymbol{x}^{\boldsymbol{v}_{k}} \in S_{(0,\dots,0,a_{i+1},\dots,a_s)}$. We need to check that 
    \[L(\boldsymbol{x}^{\boldsymbol{u}_j}\boldsymbol{x^w})\boldsymbol{x}^{\boldsymbol{v}_{k}} \subseteq M^{x_i \text{lex}}.\] To see that this is true, let $\boldsymbol{x^{u'}} \in S_{(a_1,\dots,a_{i-1},0,\dots,0)}$ and $\boldsymbol{x^{w'}} \in S_{a_i\boldsymbol{e}_i}$ with $\boldsymbol{x^{u'}} \in L(\boldsymbol{x}^{\boldsymbol{u}_j})$ and $\boldsymbol{x^{w'}} \in L(\boldsymbol{x^w})$. We wish to show that $\boldsymbol{x^{u'}}\boldsymbol{x^{w'}}\boldsymbol{x^v} \in M^{x_i \text{lex}}$. Since $M$ is $x_1,\dots,x_{i-1}$-lex it follows that for any $\boldsymbol{x^{u'}} \in L(\boldsymbol{x}^{\boldsymbol{u}_j})$ and any $\boldsymbol{x^{w''}} \in M_{j,k}$, $\boldsymbol{x^{u'}}\boldsymbol{x^{w''}}\boldsymbol{x}^{\boldsymbol{v}_{k}} \in M$. In particular $\boldsymbol{x^{u'}} = \boldsymbol{x}^{\boldsymbol{u}_{j'}}$ for some $j' < j$ and $\boldsymbol{x^{w''}} \in M_{j',k}$. Since this holds for any $\boldsymbol{x^{w''}} \in M_{j,k}$, $M_{j,k} \subseteq M_{j',k}$ and consequently $M_{j,k}^{\text{lex}} \subseteq M_{j',k}^{\text{lex}}$. In particular, $\boldsymbol{x^w} \in M_{j',k}^{\text{lex}}$ and so $\boldsymbol{x^{u'}}\boldsymbol{x^w}\boldsymbol{x^v} \in M^{x_i \text{lex}}$. By definition $M^{x_i \text{lex}}$ is $x_i$-lex so it then follows that $\boldsymbol{x^{u'}}\boldsymbol{x^{w'}}\boldsymbol{x^v} \in M^{x_i \text{lex}}$.
\end{proof}

To define $M^{\text{multilex}} = M^{x_1 \dots x_s \text{lex}}$ we now need only check that if $M$ is strongly multistable and $x_1, \dots, x_{i-1}$-lex, then $M^{x_i \text{lex}}$ is also strongly multistable. It then follows that $M^{\text{multilex}}$ is indeed multilex by Lemma \ref{ismultilex}.
\begin{lemma}\label{ismultistable}
    Let $M$ be a strongly multistable set, and let $1 \leq i \leq s$. If $i \geq 2$, suppose further that $M$ is also $x_1,\dots,x_{i-1}$-lex. Then $M^{x_i \text{lex}}$ is strongly multistable.
\end{lemma}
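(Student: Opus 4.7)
The plan is to verify the strongly multistable condition directly on $M^{x_i\text{lex}}$ using its explicit decomposition. Write $M = \bigcup_{j=1}^k M_j \boldsymbol{x}^{\boldsymbol{u}_j}$ as in the definition, where $\boldsymbol{x}^{\boldsymbol{u}_j} \in S_{(a_1,\dots,a_{i-1},0,a_{i+1},\dots,a_s)}$ and $M_j \subseteq S_{a_i\boldsymbol{e}_i}$, so that $M^{x_i\text{lex}} = \bigcup_{j} M_j^{\text{lex}} \boldsymbol{x}^{\boldsymbol{u}_j}$. A general element has the form $\boldsymbol{x^v}\boldsymbol{x}^{\boldsymbol{u}_j}$ with $\boldsymbol{x^v} \in M_j^{\text{lex}}$. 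To check strong multistability, I need to show that for every variable $x_{p,k}$ dividing this product and every $l < k$, the monomial $x_{p,l}\tfrac{\boldsymbol{x^v}\boldsymbol{x}^{\boldsymbol{u}_j}}{x_{p,k}}$ again lies in $M^{x_i\text{lex}}$. I split into two cases according to whether $p = i$ or $p \neq i$.

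The case $p = i$ is immediate: since $x_{i,k}$ can only divide the factor $\boldsymbol{x^v} \in S_{a_i\boldsymbol{e}_i}$, and $M_j^{\text{lex}}$ is a lex (hence strongly stable) set of monomials in the single-graded ring $S_{a_i\boldsymbol{e}_i}$, the swapped monomial $x_{i,l}\tfrac{\boldsymbol{x^v}}{x_{i,k}}$ still lies in $M_j^{\text{lex}}$, and multiplication by $\boldsymbol{x}^{\boldsymbol{u}_j}$ gives the required element of $M^{x_i\text{lex}}$.

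The case $p \neq i$ is the main content. Here $x_{p,k}$ divides $\boldsymbol{x}^{\boldsymbol{u}_j}$, and I set $\boldsymbol{x}^{\boldsymbol{u}_{j'}} := x_{p,l}\tfrac{\boldsymbol{x}^{\boldsymbol{u}_j}}{x_{p,k}}$, which still lies in $S_{(a_1,\dots,a_{i-1},0,a_{i+1},\dots,a_s)}$, so $\boldsymbol{x}^{\boldsymbol{u}_{j'}}$ is one of the indexing monomials in the decomposition of $M^{x_i\text{lex}}$. The key claim is $M_j \subseteq M_{j'}$, which I would establish by an appeal to strong multistability of $M$ itself: given any $\boldsymbol{x^w} \in M_j$ we have $\boldsymbol{x^w}\boldsymbol{x}^{\boldsymbol{u}_j} \in M$, and applying the multistable swap in the variable $x_{p,k}$ yields $\boldsymbol{x^w}\boldsymbol{x}^{\boldsymbol{u}_{j'}} \in M$, so $\boldsymbol{x^w} \in M_{j'}$. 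By Remark \ref{subseteqs} (applied in the single-graded ring $S_{a_i\boldsymbol{e}_i}$) this inclusion is preserved by the lex operation, giving $M_j^{\text{lex}} \subseteq M_{j'}^{\text{lex}}$. Hence $\boldsymbol{x^v} \in M_{j'}^{\text{lex}}$, and $\boldsymbol{x^v}\boldsymbol{x}^{\boldsymbol{u}_{j'}} = x_{p,l}\tfrac{\boldsymbol{x^v}\boldsymbol{x}^{\boldsymbol{u}_j}}{x_{p,k}} \in M^{x_i\text{lex}}$, as required.

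I do not expect a serious obstacle here; the only delicate point is bookkeeping the decomposition so that the strong multistability of $M$ can be transported to a containment of the $M_j$, and then propagated through the lex operation. Note that the $x_1,\dots,x_{i-1}$-lex hypothesis on $M$ is not actually invoked in this argument; it is needed only downstream, in combination with Lemma \ref{ismultilex}, to make the iterated construction $M^{\text{multilex}} = M^{x_1\dots x_s \text{lex}}$ genuinely multilex.
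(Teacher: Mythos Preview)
Your proof is correct and slightly more economical than the paper's. The paper uses a finer decomposition
\[
M = \bigcup_{j,k} \boldsymbol{x}^{\boldsymbol{u}_j}M_{j,k}\boldsymbol{x}^{\boldsymbol{v}_k},
\]
separating the degree-$(a_1,\dots,a_{i-1},0,\dots,0)$ part $\boldsymbol{x}^{\boldsymbol{u}_j}$ from the degree-$(0,\dots,0,a_{i+1},\dots,a_s)$ part $\boldsymbol{x}^{\boldsymbol{v}_k}$, and then treats the swap $x_{l,p}\to x_{l,q}$ in two cases: for $l \leq i$ it invokes Lemma~\ref{ismultilex} (hence the $x_1,\dots,x_{i-1}$-lex hypothesis) to conclude $M^{x_i\text{lex}}$ is $x_1,\dots,x_i$-lex, while for $l > i$ it argues via the inclusion $M_{j,k} \subseteq M_{j,k'}$ exactly as you do. By contrast, you handle all $p \neq i$ uniformly using only the strong multistability of $M$, which is cleaner and, as you correctly note, shows that the $x_1,\dots,x_{i-1}$-lex hypothesis is not actually needed for this lemma on its own. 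The paper's route has the minor expository advantage of reusing Lemma~\ref{ismultilex} and foreshadowing the inductive structure of $M^{\text{multilex}}$, but your argument isolates the minimal input more sharply.
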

\begin{proof}
We again follow the proof of \cite{aramova2000bigeneric}*{Theorem 4.8}. As in Lemma \ref{ismultilex}, for $i \geq 2$ we decompose
\[M = \bigcup_{j=1}^{l_1} \bigcup_{k=1}^{l_2} \boldsymbol{x}^{\boldsymbol{u}_j}M_{j,k}\boldsymbol{x}^{\boldsymbol{v}_{k}}\]
where $\boldsymbol{x}^{\boldsymbol{u}_j} \in S_{(a_1,\dots,a_{i-1},0,\dots,0)}$, $M_{j,k} \subseteq S_{a_i\boldsymbol{e}_i}$ and $\boldsymbol{x}^{\boldsymbol{v}_{k}} \in S_{(0,\dots,0,a_{i+1},\dots,a_s)}$. If $i=1$, we simply have
\[M = \cup_{k=1}^{l_2} M_{1,k}\boldsymbol{x}^{\boldsymbol{v}_{k}}\] for $M_{1,k} \subseteq S_{a_1\boldsymbol{e_1}}$ and $\boldsymbol{x}^{\boldsymbol{v}_{k}} \in S_{(0,a_2,\dots,a_s)}$. We need to show that for $\boldsymbol{m} \in M^{x_i \text{lex}}$, if $x_{l,p}$ divides $\boldsymbol{m}$, then $x_{l,q}\frac{\boldsymbol{m}}{x_{l,p}} \in M^{x_i \text{lex}}$ for all $0 \leq q < p$. If $l \leq i$ then this holds by Lemma \ref{ismultilex}, since $M^{x_i \text{lex}}$ is $x_1, \dots, x_i$-lex. Otherwise, note that the $X=\{\boldsymbol{x}^{\boldsymbol{v}_1},\dots,\boldsymbol{x}^{\boldsymbol{v}_{l_2}}\}$ form a strongly multistable set since $M$ is strongly multistable. For $l > i$, if $x_{l,p}$ divides $\boldsymbol{x}^{\boldsymbol{v}_{k}}$ for some $k$ then $x_{l,q}\frac{\boldsymbol{x}^{\boldsymbol{v}_{k}}}{x_{l,p}} \in X$ for all $0 \leq q < p$. We write $x_{l,q}\frac{\boldsymbol{x}^{\boldsymbol{v}_{k}}}{x_{l,p}} = \boldsymbol{x}^{\boldsymbol{v}_{k'}}$ for some $k' < k$. Since $M$ is strongly multistable, for fixed $j$, we have $M_{j,k} \subseteq M_{j,k'}$ and consequently $M_{j,k}^{\text{lex}} \subseteq M_{j,k'}^{\text{lex}}$. It follows that if $x_{l,p}$ divides $\boldsymbol{m} \in M^{x_i \text{lex}}$ for $l > i$ then $x_{l,q}\frac{\boldsymbol{m}}{x_{l,p}} \in M^{x_i \text{lex}}$ for all $0 \leq q < p$.
\end{proof}
To obtain the main theorem of this section, we prove one more lemma.
\begin{lemma}\label{multiexist}
Let $I \subseteq S$ be a strongly multistable ideal. Then \[I^{\text{multilex}} = \bigoplus_{(a_1,\dots,a_s) \in \mathbb{N}^s} (I_{(a_1,\dots,a_s)})^{\text{multilex}}\] is an ideal.
\end{lemma}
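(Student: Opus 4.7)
To show that $I^{\text{multilex}}$ is an ideal, I only need to check closure under multiplication by each variable: given $\boldsymbol{m} \in (I_{(a_1,\dots,a_s)})^{\text{multilex}}$ and any variable $x_{l,p}$, I want
\[
x_{l,p}\,\boldsymbol{m} \;\in\; (I_{(a_1,\dots,a_l+1,\dots,a_s)})^{\text{multilex}}.
\]
Since $I$ is itself an ideal, $x_{l,p}I_{(a_1,\dots,a_s)} \subseteq I_{(a_1,\dots,a_l+1,\dots,a_s)}$. Applying Remark \ref{subseteqs} iteratively across the $s$ successive $x_i$-lex operations yields
\[
(x_{l,p}I_{(a_1,\dots,a_s)})^{\text{multilex}} \;\subseteq\; (I_{(a_1,\dots,a_l+1,\dots,a_s)})^{\text{multilex}}.
\]
Hence the proposition reduces to the key claim: for a strongly multistable set $M \subseteq S_{(a_1,\dots,a_s)}$ and any variable $x_{l,p}$,
\[
x_{l,p}\,M^{\text{multilex}} \;\subseteq\; (x_{l,p}M)^{\text{multilex}}.
\]

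To establish the key claim, I would first verify that $x_{l,p}M$ is again strongly multistable, which is a direct check from Definition \ref{multistab} using the fact that any variable dropping $x_{l,q}\boldsymbol{m}/x_{l,p'}$ either comes from applying strong multistability inside $M$ and then multiplying by $x_{l,p}$, or from the newly introduced factor $x_{l,p}$ (in which case the result lies in $x_{l,p}M$ trivially). Then I would prove the containment by induction on $s$ and a case analysis on the index $l$, mirroring the decomposition strategy used in Lemmas \ref{ismultilex} and \ref{ismultistable}. Using
\[
M = \bigcup_{j,k} \boldsymbol{x}^{\boldsymbol{u}_j} M_{j,k} \boldsymbol{x}^{\boldsymbol{v}_k},
\]
multiplication by $x_{l,p}$ commutes cleanly with each $x_i$-lex step for $i \neq l$ (since these act on disjoint variable blocks), while for $i = l$ one invokes the classical standard-graded fact that multiplying a lex segment by a variable lands in the lex segment of the incremented degree.

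Combining the key claim with the reduction above completes the proof that $x_{l,p}\boldsymbol{m} \in I^{\text{multilex}}$, so $I^{\text{multilex}}$ is closed under multiplication by all variables and is therefore an ideal of $S$.

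\textbf{Main obstacle.} The subtle step is the index range $i > l$ in the key claim: after the $x_l$-lex operation has reshaped $M$ and we then multiply by $x_{l,p}$, the subsequent $x_i$-lex operations for $i > l$ must still be compatible with the multiplication. I expect to handle this by an inner induction on $s-l$, showing that on each graded block indexed by a fixed tuple of later variable choices, the lex operation and multiplication by $x_{l,p}$ can be analysed separately via the decomposition from Lemma \ref{standec}, and then reassembled using Remark \ref{subseteqs}.
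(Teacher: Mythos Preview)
Your reduction to the key claim
\[
x_{l,p}\,M^{\text{multilex}} \;\subseteq\; (x_{l,p}M)^{\text{multilex}}
\]
does not hold, and the auxiliary claim that $x_{l,p}M$ is strongly multistable is also false. Take $s=1$, $S=k[x_{1,0},x_{1,1}]$, $M=\{x_{1,0}\}$ (trivially strongly stable), and multiply by $x_{1,1}$. Then $M^{\text{multilex}}=M$, so $x_{1,1}M^{\text{multilex}}=\{x_{1,0}x_{1,1}\}$. But $x_{1,1}M=\{x_{1,0}x_{1,1}\}$ and its lex replacement in degree $2$ is $\{x_{1,0}^{2}\}$, which does not contain $x_{1,0}x_{1,1}$. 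The same example shows $x_{1,1}M$ fails strong stability, since $x_{1,0}^{2}\notin x_{1,1}M$. The step you call the ``classical standard-graded fact that multiplying a lex segment by a variable lands in the lex segment of the incremented degree'' is precisely what breaks: that fact is true for multiplication by the whole degree-one piece $S_{1}$, not for a single variable.

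The paper repairs exactly this point by working, for each index $p$, with the full block
\[
X_{p}M=\{x_{p,q}\boldsymbol{m}\mid 0\le q\le n_{p},\ \boldsymbol{m}\in M\}
\]
instead of a single $x_{l,p}$. Then $X_{p}M$ \emph{is} strongly multistable (the ``new factor'' case in your check now succeeds because every $x_{p,k}$ is available), and the paper proves $X_{p}(I_{(a_1,\dots,a_s)})^{\text{multilex}}\subseteq (X_{p}I_{(a_1,\dots,a_s)})^{\text{multilex}}$ by treating the ranges $i<p$, $i=p$, $i>p$ separately via decompositions of the type you describe, together with Remark~\ref{subseteqs}. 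Since closure under each $X_{p}$ already implies closure under every individual variable, this suffices to conclude that $I^{\text{multilex}}$ is an ideal. Your overall architecture is therefore sound; the fix is to coarsen the key claim from a single variable to the whole block $X_{p}$.
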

\begin{proof}
    We now follow the proof of \cite{aramova2000bigeneric}*{Lemma 4.12}. For a set of monomials $M \subseteq S$, define $X_pM = \{x_{p,q}\boldsymbol{m} \mid 0 \leq q \leq n_p, \boldsymbol{m} \in M\}$. To verify that $I^{\text{multilex}}$ is an ideal, we need only check that for any $(a_1,\dots,a_s) \in \mathbb{N}$ and $1 \leq p \leq s$, we have 
    $X_{p} (I_{(a_1,\dots,a_s)})^{\text{multilex}} \subseteq (I_{(a_1,\dots,a_{p}+1,\dots,a_s)})^{\text{multilex}}$. 
    Note that $X_{p}I_{(a_1,\dots,a_s)} \subseteq I_{(a_1,\dots,a_{p}+1,\dots,a_s)}$, hence
    $(X_{p}I_{(a_1,\dots,a_s)})^{\text{multilex}} \subseteq (I_{(a_1,\dots,a_p+1,\dots,a_s)})^{\text{multilex}}$.
    It follows that if we can show that $X_{p}(I_{(a_1,\dots,a_s)})^{\text{multilex}} \subseteq (X_{p}I_{(a_1,\dots,a_s)})^{\text{multilex}}$ then we are done. Decompose $I_{(a_1,\dots,a_s)}$ as 
    \[I_{(a_1,\dots,a_s)} = \bigcup_{j=1}^{l_1} M_j \boldsymbol{x}^{\boldsymbol{v}_j}\] for $M_j \subseteq S_{(a_1,\dots,a_{p-1},0,\dots,0)}$ and $\boldsymbol{x}^{\boldsymbol{v}_j} \in S_{(0,\dots,0,a_p,\dots,a_s)}$. Observe that 
    \[(I_{(a_1,\dots,a_s)})^{x_1 \dots x_{p-1} \text{lex}} = \bigcup_{j=1}^{l_1} M_j^{x_1 \dots x_{p-1} \text{lex}}\boldsymbol{x}^{\boldsymbol{v}_j}.\] Let $X_p\{\boldsymbol{x}^{\boldsymbol{v}_1},\dots,\boldsymbol{x}^{\boldsymbol{v}_{l_1}}\} = \{\boldsymbol{x}^{\boldsymbol{w}_1},\dots,\boldsymbol{x}^{\boldsymbol{w}_{l_2}}\}$. We similarly decompose \[X_{p}I_{(a_1,\dots,a_s)} = \bigcup_{k=1}^{l_2} N_k \boldsymbol{x}^{\boldsymbol{w}_k}\] for $N_k \subseteq S_{(a_1,\dots,a_{p-1},0,\dots,0)}$ and $\boldsymbol{x}^{\boldsymbol{w}_k} \in S_{(0,\dots,0,a_p,\dots,a_s)}$. For all $1 \leq j \leq l_1$ and $0 \leq q \leq n_p$ we have $x_{p,q}\boldsymbol{x}^{\boldsymbol{v}_j} = \boldsymbol{x}^{\boldsymbol{w}_k}$ for some $1 \leq k \leq l_2$, with $M_j \subseteq N_k$. If $M_j \subseteq N_k$ it follows that $M_j^{x_1 \dots x_{p-1} \text{lex}} \subseteq N_k^{x_1 \dots x_{p-1} \text{lex}}$ by Remark \ref{subseteqs}. Consequently, \[X_{p}\bigcup_{j=1}^{l_1} M_j^{x_1 \dots x_{p-1} \text{lex}}\boldsymbol{x}^{\boldsymbol{v}_j} \subseteq \bigcup_{k=1}^{l_2} N_k^{x_1 \dots x_{p-1} \text{lex}}\boldsymbol{x}^{\boldsymbol{w}_k}.\] In other words, $X_p(I_{(a_1,\dots,a_s)})^{x_1 \dots x_{p-1} \text{lex}} \subseteq (X_pI_{(a_1,\dots,a_s)})^{x_1 \dots x_{p-1} \text{lex}}$. A similar argument shows that $X_{p} (I_{(a_1,\dots,a_s)})^{\text{multilex}} \subseteq (X_p (I_{(a_1,\dots,a_s)})^{x_1 \dots x_p \text{lex}})^{x_{p+1} \dots x_s \text{lex}}$. The strategy here is to decompose
    \[(I_{(a_1,\dots,a_s)})^{x_1 \dots x_p \text{lex}}= \bigcup_{j=1}^{l_1} M_j \boldsymbol{x}^{\boldsymbol{v}_j}\] with $M_j \subseteq S_{(0,\dots,0,a_{p+1},\dots,a_s)}$ and $\boldsymbol{x}^{\boldsymbol{v}_{k}} \in S_{(a_1,\dots,a_p,0,\dots,0)}$, and observe that $ (I_{(a_1,\dots,a_s)})^{\text{multilex}} = \bigcup_{j=1}^{l_1}M_j^{x_{p+1} \dots x_s \text{lex}}\boldsymbol{x}^{\boldsymbol{v}_j}$. We then consider the decomposition
    \[X_p(I_{(a_1,\dots,a_s)})^{x_1 \dots x_p \text{lex}} = \bigcup_{k=1}^{l_2} N_k \boldsymbol{x}^{\boldsymbol{w}_k},\] with $X_p\{\boldsymbol{x}^{\boldsymbol{v}_1}, \dots, \boldsymbol{x}^{\boldsymbol{v}_{l_1}}\}=\{\boldsymbol{x}^{\boldsymbol{w}_1},\dots, \boldsymbol{x}^{\boldsymbol{w}_{l_2}}\}$ and $N_k \subseteq S_{(0,\dots,0,a_{p+1},\dots,a_s)}$. As before, we see that for all $1 \leq j \leq l_1$ and $0 \leq q \leq n_p$, we have $x_{p,q} \boldsymbol{x}^{\boldsymbol{v}_j}= \boldsymbol{x}^{\boldsymbol{w}_k}$ for some $1 \leq k \leq l_2$, and consequently $M_j \subseteq N_k$ for this value of $k$. By Remark \ref{subseteqs} we then have $M_j^{x_{p+1} \dots x_s \text{lex}} \subseteq N_k^{x_{p+1} \dots x_s \text{lex}}$. It follows that 
     \[X_{p}\bigcup_{j=1}^{l_1} M_j^{x_{p+1} \dots x_s \text{lex}}\boldsymbol{x}^{\boldsymbol{v}_j} \subseteq \bigcup_{k=1}^{l_2} N_k^{x_{p+1} \dots x_s \text{lex}}\boldsymbol{x}^{\boldsymbol{w}_k},\] or in other words, $X_p(I_{(a_1,\dots,a_s)})^{\text{multilex}} \subseteq (X_p (I_{(a_1,\dots,a_s)})^{x_1 \dots x_p \text{lex}})^{x_{p+1} \dots x_s \text{lex}}$.
     
     We have a chain of inclusions \[X_p (I_{(a_1,\dots,a_s)})^{x_1 \dots x_p \text{lex}} \subseteq (X_p (I_{(a_1,\dots,a_s)})^{x_1 \dots x_{p-1} \text{lex}})^{x_p \text{lex}} \subseteq (X_pI_{(a_1,\dots,a_s)})^{x_1 \dots x_{p} \text{lex}}.\] The second inclusion follows immediately from the previous part and Remark \ref{subseteqs}. The first inclusion is a direct generalisation of the first part of the proof of \cite{aramova2000bigeneric}*{Lemma 4.12}. Combining everything, we obtain
        \[X_p(I_{(a_1,\dots,a_s)})^{\text{multilex}} \subseteq (X_p (I_{(a_1,\dots,a_s)})^{x_1 \dots x_p \text{lex}})^{x_{p+1} \dots x_s \text{lex}} \subseteq (X_pI_{(a_1,\dots,a_s)})^{\text{multilex}}.\] This completes the proof.
\end{proof}
\begin{theorem}\label{multilex}
    For every ideal $I \subseteq S$ which is homogeneous with respect to the $\mathbb{Z}^s$-grading, there exists a multilex ideal with the same Hilbert function.
\end{theorem}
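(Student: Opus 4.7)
The plan is to combine the construction $I \mapsto I^{\text{multilex}}$ developed in Lemmas \ref{ismultilex}, \ref{ismultistable}, and \ref{multiexist} with a preliminary reduction to the strongly multistable case via a generic initial ideal. Given an arbitrary $\mathbb{Z}^s$-homogeneous ideal $I \subseteq S$, the first step is to pass to the multigeneric initial ideal $J = \multigin(I)$ of~\cite{conca2022radical}, which is strongly multistable and has the same Hilbert function as $I$. Thus it suffices to prove the theorem for $J$, that is, to produce a multilex ideal with the same Hilbert function as a given strongly multistable ideal $J$.

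Next I would form $J^{\text{multilex}} = \bigoplus_{(a_1,\dots,a_s) \in \mathbb{N}^s} (J_{(a_1,\dots,a_s)})^{\text{multilex}}$. By Lemma \ref{multiexist}, $J^{\text{multilex}}$ is an ideal of $S$. Applying Lemmas \ref{ismultistable} and \ref{ismultilex} inductively to each multidegree piece $J_{(a_1,\dots,a_s)}$, I obtain that at each stage of the construction $((J_{(a_1,\dots,a_s)})^{x_1 \text{lex}})^{x_2 \text{lex}} \dots$, the resulting set remains strongly multistable and acquires an additional lex property, so $(J_{(a_1,\dots,a_s)})^{\text{multilex}}$ is multilex in the sense of Definition \ref{xlex}. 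Therefore $J^{\text{multilex}}$ is a multilex ideal.

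It remains to check that $J$ and $J^{\text{multilex}}$ have the same Hilbert function, for which I only need to show that each individual operation $M \mapsto M^{x_i \text{lex}}$ preserves the cardinality of $M \subseteq S_{(a_1,\dots,a_s)}$. This is immediate from the definition: decomposing $M = \bigcup_{j=1}^k M_j \boldsymbol{x}^{\boldsymbol{u}_j}$ as a disjoint union (with the $\boldsymbol{x}^{\boldsymbol{u}_j}$ distinct monomials in $S_{(a_1,\dots,a_{i-1},0,a_{i+1},\dots,a_s)}$), we have $|M_j^{\text{lex}}| = |M_j|$ for each $j$, so $|M^{x_i \text{lex}}| = \sum_j |M_j^{\text{lex}}| = \sum_j |M_j| = |M|$. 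Iterating over $i=1,\dots,s$ gives $|(J_{(a_1,\dots,a_s)})^{\text{multilex}}| = |J_{(a_1,\dots,a_s)}|$, so $J^{\text{multilex}}$ and $J$ agree in every multidegree, and hence both agree with $I$.

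The only delicate point in this argument is ensuring the construction is compatible with multiplication by variables so that $J^{\text{multilex}}$ is genuinely an ideal, but that is exactly the content of Lemma \ref{multiexist}; everything else is bookkeeping with the decompositions used in that lemma. Consequently the proof is essentially a one-line assembly of the preceding lemmas together with the reduction via $\multigin(I)$.
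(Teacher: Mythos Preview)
Your proposal is correct and follows essentially the same route as the paper: reduce to the strongly multistable case via $\multigin(I)$, then invoke Lemmas \ref{ismultilex}, \ref{ismultistable}, and \ref{multiexist} to produce the multilex ideal. Your extra paragraph verifying that $M \mapsto M^{x_i\text{lex}}$ preserves cardinality is a detail the paper leaves implicit, but it is exactly the right observation and your argument for it is sound.
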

\begin{proof}
As highlighted in Section \ref{twotwo}, for every $I$ there exists a strongly multistable ideal with the same Hilbert function. The ideal $\multigin(I)$ is one such example. Lemmas \ref{ismultilex}, \ref{ismultistable} and \ref{multiexist} then show that for any strongly multistable ideal there exists a multilex ideal with the same Hilbert function.
\end{proof}
We will exploit Theorem \ref{multilex} in Section \ref{twohalf} to find the degrees in which persistence occurs.

\section{Persistence-type results for products of projective spaces}\label{twohalf}
We now establish some preliminary results required to prove Theorem \ref{maintheorem1}. We begin by understanding the structure of Hilbert polynomials on $S$. We then relate multilex ideals and bilex ideals, and use this to generalise a theorem of Crona \cite{crona2006standard}*{Theorem 4.10}. We conclude with the proof of Theorem \ref{maintheorem1} and a comparison to the original result of Gotzmann.
\subsection{The structure of Hilbert polynomials on $S$}
In this subsection we will apply the results of Section \ref{two} on strongly multistable and multilex ideals to better understand the structure of $P_I(t_1,\dots,t_s)$. We begin with a lemma which justifies that the sum of Hilbert polynomials is itself a Hilbert polynomial.
\begin{lemma}\label{addhilb}
    Let $I_1, \dots, I_l \subseteq S$ be monomial ideals, homogeneous with respect to the $\mathbb{Z}^s$-grading and generated in degrees $\leq (a_1, \dots, a_s)$, with each $a_i \geq 2$. Then for $b_i \geq 1$, \[H_{I_1}(b_1,\dots,b_s) + \dots + H_{I_l}(b_1,\dots,b_s) = H_J(b_1,\dots,b_s)\] for a monomial ideal $J$ generated in degrees $\leq(a_1, \dots, a_s)$ in $\cox(\mathbb{P}^{l(n_1+1)-1} \times \dots \times \mathbb{P}^{l(n_s+1)-1})$. It follows that
    \[P_{J}(t_1,\dots,t_s) = P_{I_1}(t_1,\dots,t_s) + \dots + P_{I_l}(t_1,\dots,t_s).\]
\end{lemma}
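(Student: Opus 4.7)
My plan is to construct $J$ by placing $l$ disjoint copies of $S$ inside the target Cox ring $R = \cox(\mathbb{P}^{l(n_1+1)-1} \times \dots \times \mathbb{P}^{l(n_s+1)-1})$ and adding ``mixed'' generators that prevent the copies from interacting. Concretely, I would label the variables of $R$ as $\{x_{i,j}^{(k)} \mid 1 \leq k \leq l,\ 1 \leq i \leq s,\ 0 \leq j \leq n_i\}$ with $\deg(x_{i,j}^{(k)}) = \boldsymbol{e}_i$, and for each $k$ let $\pi_k \subseteq R$ denote the subalgebra generated by the variables with superscript $(k)$, so that $\pi_k \cong S$ as $\mathbb{Z}^s$-graded rings. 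Writing $I_k^{(k)} \subseteq \pi_k$ for the image of $I_k$ under this isomorphism, I would define $M \subseteq R$ to be the monomial ideal generated by all products $x_{i,j}^{(k)} x_{i',j'}^{(k')}$ with $k \neq k'$, and set $J = M + \sum_{k=1}^{l} I_k^{(k)} R$. After discarding any $I_k$ equal to $S$ (which contribute zero to the sum of Hilbert functions in positive degrees), I may assume each $I_k$ is proper.

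The core verification is the Hilbert function identity. In any degree $(b_1,\dots,b_s)$ with each $b_i \geq 1$, a monomial of $R$ that is not in $M$ uses variables from only one copy, and positivity of each $b_i$ forces such a monomial to lie in a uniquely determined $\pi_k$. This produces a disjoint-union decomposition of the non-$M$ monomials of $R_{(b_1,\dots,b_s)}$ into $\bigsqcup_{k=1}^{l}(\pi_k)_{(b_1,\dots,b_s)}$. Intersecting $J$ with each $\pi_k$ then yields exactly $I_k^{(k)}$, because any monomial generator of $I_{k'}^{(k')}$ with $k' \neq k$ uses a variable outside $\pi_k$ and so can only contribute to $M$ when restricted to $\pi_k$. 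Summing the counts over $k$ gives $H_J(b_1,\dots,b_s) = \sum_{k=1}^{l} H_{I_k}(b_1,\dots,b_s)$.

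Next I would check the degree bound on generators of $J$. The $I_k^{(k)}R$-generators inherit their degree bounds directly from the $I_k$; the generators of $M$ have degree either $2\boldsymbol{e}_i$ (when $i = i'$) or $\boldsymbol{e}_i + \boldsymbol{e}_{i'}$ (when $i \neq i'$), both of which are componentwise bounded by $(a_1,\dots,a_s)$ precisely because each $a_i \geq 2$. The polynomial identity $P_J = \sum_{k} P_{I_k}$ then follows by evaluating the Hilbert function identity at points deep in the nef cone, where both sides coincide with their associated Hilbert polynomials. The main subtlety to get right is the bookkeeping around the partition of non-$M$ monomials into copies and the ruling out of the stray interaction $I_{k'}^{(k')}R \cap \pi_k$ for $k' \neq k$; both reduce to elementary monomial considerations once all $I_k$ are taken to be proper, and the hypothesis $a_i \geq 2$ is exactly what is needed to absorb the mixed quadratic generators into the desired degree range.
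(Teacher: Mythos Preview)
Your proposal is correct and takes essentially the same approach as the paper: both place disjoint copies of $S$ inside the larger Cox ring, transport each $I_k$ into its own copy, and add all ``mixed'' quadratic monomials $x_{i,j}^{(k)}x_{i',j'}^{(k')}$ with $k\neq k'$ to force the copies to be independent in every degree with all $b_i\geq 1$. The only cosmetic difference is that the paper carries out the construction iteratively, merging two ideals at a time, whereas you build $J$ in one shot; your explicit treatment of the edge case $I_k=S$ and of the intersection $I_{k'}^{(k')}R\cap\pi_k$ is a welcome bit of extra care.
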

\begin{proof}
    We begin by relabelling the variables of $I_2$, replacing $x_{i,j}$ with $y_{i,j}$, to obtain an ideal $I_2' \subseteq k[y_{1,0},\dots,y_{1,n_1},\dots,y_{s,0},\dots,y_{s,n_s}]$. Let \[S'=k[x_{1,0}, \dots, x_{1,n_1}, y_{1,0},\dots,y_{1,n_1},\dots,x_{s,0}, \dots, x_{s,n_s},y_{s,0},\dots,y_{s,n_s}],\] with $\deg(x_{i,j})=\deg(y_{i,j})=\boldsymbol{e}_i$. Note that $S'$ is the Cox ring of \[\mathbb{P}^{2n_1 + 1} \times \dots \times \mathbb{P}^{2n_s + 1}.\] Consider the ideal $J_1 = I_1+I_2'+K \subseteq S'$, where $K$ is generated by all monomials of the form $x_{i,j} y_{p,q}$. The Hilbert function of this ideal agrees with $H_{I_1}(b_1,\dots,b_s) + H_{I_2}(b_1,\dots,b_s)$ for $b_i \geq 1$. Note that each $x_{i,j}y_{p,q}$ has degree $\leq(2, \dots, 2)$, so $J_{1}$ is generated in degrees $\leq(a_1, \dots, a_s)$. We also observe that $J_{1}$ is a monomial ideal. Now applying the same argument to $J_1$ and $I_3$ allows us to obtain a monomial ideal $J_2$ in $\cox(\mathbb{P}^{3(n_1+1)-1} \times \dots \times \mathbb{P}^{3(n_s+1)-1})$, generated in degrees $\leq (a_1,\dots,a_s)$, and with $H_{J_1}(b_1,\dots,b_s)+H_{I_3}(b_1,\dots,b_s)=H_{J_2}(b_1,\dots,b_s)$. We continue to repeat this argument to obtain the ideal $J$, which is a monomial ideal in $\cox(\mathbb{P}^{l(n_1+1)-1} \times \dots \times \mathbb{P}^{l(n_s+1)-1})$.
\end{proof}
For $P \in \mathbb{Q}[t_1,\dots,t_s]$ set $\maxdeg(P)$ to be the vector $\boldsymbol{v}=(v_i) \in \mathbb{N}^s$, where each $v_i$ is the maximum power of $t_i$ dividing at least one of the terms of $P$. Combining Lemma \ref{standec} and Lemma \ref{addhilb} we prove the following lemma, which allows us to better understand the Hilbert polynomial of a multigraded ideal.

\begin{lemma}\label{standecomp}
    Let $P_I(t_1,\dots,t_s)$ be the Hilbert polynomial of an ideal $I \subseteq S$, which is homogeneous with respect to the $\mathbb{Z}^{s}$-grading. Fix $(a_1,\dots,a_s) \in \mathbb{N}^s$ such that $a_i \geq 2$ for all $i$ and such that $I^{\text{multilex}}$ is generated in degrees $\leq(a_1,\dots,a_s)$. Suppose that $P_I(t_1,\dots,t_s)$ has $\maxdeg(P_I)=(p_1,\dots,p_s)$. Then we may write
    \[P_I(t_1,\dots,t_s)=\sum_{i_s=0}^{p_s} \dots \sum_{i_2=0}^{p_2} F_{i_2 \dots i_s}(t_1)\binom{t_2-a_2+i_2}{i_2}\dots\binom{t_s-a_s+i_s}{i_s},\] where each $F_{i_2 \dots i_s}(t_1)$ is the Hilbert polynomial for some standard-graded monomial ideal $J_{i_2 \dots i_s}$ generated in degrees $\leq a_1$. Further, we have $H_{J_{i_2 \dots i_s}}(b_1)=F_{i_2 \dots i_s}(b_1)$ for $b_1 \geq a_1$, and consequently $H_I(b_1,\dots,b_r)=P_I(b_1,\dots,b_r)$ when all $b_i \geq a_i$.
\end{lemma}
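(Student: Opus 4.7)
The plan is to reduce to the multilex case via Theorem \ref{multilex}, apply the partial Stanley decomposition from Section \ref{twotwo}, and then regroup the resulting sum by the higher-block data.

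By Theorem \ref{multilex} we may replace $I$ with the multilex ideal $I^{\text{multilex}}$, which has the same Hilbert function and is generated in degrees $\leq(a_1,\dots,a_s)$. Since multilex implies strongly multistable, Lemma \ref{standec} applies and the partial Stanley decomposition yields, for $b_j \geq a_j$,
\[
H_I(b_1,\dots,b_s) = \sum_{\boldsymbol{x^u}} \prod_{j=1}^{s} \binom{b_j - a_j + n_j - m_j(\boldsymbol{x^u})}{n_j - m_j(\boldsymbol{x^u})},
\]
where the sum runs over monomials $\boldsymbol{x^u} \in S_{(a_1,\dots,a_s)} \setminus I^{\text{multilex}}_{(a_1,\dots,a_s)}$. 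The right-hand side is a polynomial expression in $(b_1,\dots,b_s)$ valid throughout $b_j \geq a_j$, and since $H_I$ eventually agrees with $P_I$, this gives the polynomial identity $P_I(t_1,\dots,t_s) = \sum_{\boldsymbol{x^u}} \prod_j \binom{t_j - a_j + n_j - m_j(\boldsymbol{x^u})}{n_j - m_j(\boldsymbol{x^u})}$.

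Next I would regroup by the higher-block data. Writing each monomial as $\boldsymbol{x^u} = \boldsymbol{x_1^{u_1}} \boldsymbol{w}$ with $\boldsymbol{w}$ in blocks $2,\dots,s$, and setting $i_j = n_j - m_j(\boldsymbol{w})$ for $j \geq 2$, one obtains
\[
P_I(t_1,\dots,t_s) = \sum_{(i_2,\dots,i_s)} F_{i_2 \dots i_s}(t_1) \prod_{j=2}^{s} \binom{t_j - a_j + i_j}{i_j},
\]
where $F_{i_2\dots i_s}(t_1)$ is the double sum $\sum_{\boldsymbol{w}} \sum_{\boldsymbol{x_1^{u_1}}} \binom{t_1 - a_1 + n_1 - m_1(\boldsymbol{x_1^{u_1}})}{n_1 - m_1(\boldsymbol{x_1^{u_1}})}$, taken over those $\boldsymbol{w}$ with $m_j(\boldsymbol{w}) = n_j - i_j$ for $j \geq 2$ and those $\boldsymbol{x_1^{u_1}}$ of degree $a_1$ with $\boldsymbol{x_1^{u_1}}\boldsymbol{w} \notin I^{\text{multilex}}$. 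The polynomials $\binom{t_j - a_j + i_j}{i_j}$ with $i_j \in \mathbb{N}$ form a $\mathbb{Q}$-basis of $\mathbb{Q}[t_j]$, so this expansion is unique; since $P_I$ has degree $p_j$ in $t_j$, all terms with $i_j > p_j$ must vanish, which gives the claimed truncation.

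For each fixed tail $\boldsymbol{w}$, the strongly multistable property of $I^{\text{multilex}}$ forces $M_{\boldsymbol{w}} = \{\boldsymbol{x_1^{u_1}} : \boldsymbol{x_1^{u_1}}\boldsymbol{w} \in I^{\text{multilex}}\}$ to be a strongly stable set of degree-$a_1$ monomials in $k[x_{1,0},\dots,x_{1,n_1}]$, so the ideal $L_{\boldsymbol{w}}$ it generates is strongly stable and generated in degree $a_1$. Applying the $s=1$ case of Lemma \ref{standec} to $L_{\boldsymbol{w}}$ identifies the inner sum over $\boldsymbol{x_1^{u_1}}$ as $H_{L_{\boldsymbol{w}}}(t_1)$ for $t_1 \geq a_1$, which equals the Hilbert polynomial of $L_{\boldsymbol{w}}$ throughout that range. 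Thus $F_{i_2\dots i_s}(t_1)$ is a finite sum of Hilbert polynomials of standard-graded monomial ideals generated in degrees $\leq a_1$, and Lemma \ref{addhilb} with $s=1$ (using $a_1 \geq 2$) packages these into a single standard-graded monomial ideal $J_{i_2\dots i_s}$ with the stated properties. The final equality $H_I(b_1,\dots,b_s) = P_I(b_1,\dots,b_s)$ for $b_j \geq a_j$ is immediate from the first display. The main obstacle is the regrouping and identification step: one must use the multistability of $I^{\text{multilex}}$ to transfer its structure to the first block once the tail $\boldsymbol{w}$ is fixed, so that each inner sum is realised as the Hilbert polynomial of a genuine standard-graded ideal rather than a formal combination of binomials.
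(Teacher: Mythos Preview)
Your argument is correct and follows essentially the same route as the paper: pass to $I^{\text{multilex}}$, use the partial Stanley decomposition of Lemma \ref{standec}, group the sum according to the tuple $(i_2,\dots,i_s)$ attached to the tail monomial $\boldsymbol{w}$, recognise each inner sum over the first block as the Hilbert function of a strongly stable ideal generated in degree $a_1$, and then package the sum over tails $\boldsymbol{w}$ with the same $(i_2,\dots,i_s)$ into a single standard-graded ideal via Lemma \ref{addhilb}. One small notational point: in the Stanley decomposition the relevant index is the \emph{maximum} $m^{j}(\boldsymbol{x^u})$ (so that $S_{\sigma_{\boldsymbol{u}}}$ uses the variables $x_{j,k}$ with $k\geq m^{j}(\boldsymbol{x^u})$), not the minimum $m_j$; this is what makes the dimension count $\binom{t_j-a_j+n_j-m^{j}(\boldsymbol{x^u})}{n_j-m^{j}(\boldsymbol{x^u})}$ come out right and is consistent with how you then set $i_j=n_j-m^{j}(\boldsymbol{w})$.
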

\begin{proof}
We pass to $J=I^{\text{multilex}}$, which has the same Hilbert function as $I$ but is strongly multistable. Consider the set \[\mathcal{I} = \{(i_2,\dots,i_s) \in \mathbb{N}^{s-1} \mid 0 \leq i_j \leq p_j\}.\] For $\boldsymbol{i}=(i_2,\dots,i_s) \in \mathcal{I}$ consider the set of monomials \[L^{\boldsymbol{i}} = \{\boldsymbol{x^u} \in S_{(0,a_2,\dots,a_s)} \mid i_j = n_j - m^j(\boldsymbol{x^u}) \text{ for all } 2 \leq j \leq s\}.\] Fix some $\boldsymbol{x}^{\boldsymbol{u}_{1}} \in L^{\boldsymbol{i}}$. We consider the set of monomials $M_1^{\boldsymbol{i}} \subseteq S_{(a_1,0,\dots,0)}$ with $\boldsymbol{x}^{\boldsymbol{u}_{1}}M_1^{\boldsymbol{i}} \subseteq J_{(a_1,\dots,a_s)}$. The set $M_1^{\boldsymbol{i}}$ generates a strongly stable ideal $J_{1}^{\boldsymbol{i}}$ in the standard-graded polynomial ring $k[x_{1,0},\dots,x_{1,n_1}]$ with Hilbert function $H_{J_{1}^{\boldsymbol{i}}}(b_1)$. Since $J_{1}^{\boldsymbol{i}}$ is strongly stable and generated in degrees $\leq a_1$ we observe that $H_{J_{1}^{\boldsymbol{i}}}(b_1)=P_{J_{1}^{\boldsymbol{i}}}(b_1)$ for $b_1 \geq a_1$. Consider the set of monomials \[N_1^{\boldsymbol{i}} = S_{(a_1,0,\dots,0)} \setminus M_1^{\boldsymbol{i}}.\] For each $\boldsymbol{x}^{\boldsymbol{v}_j} \in N_1^{\boldsymbol{i}}$, $\boldsymbol{x}^{\boldsymbol{u}_{1}} \boldsymbol{x}^{\boldsymbol{v}_j}$ corresponds to a monomial in $S_{(a_1,\dots,a_s)} \setminus J_{(a_1,\dots,a_s)}$.  For such a monomial $\boldsymbol{x^u} = \boldsymbol{x}^{\boldsymbol{u}_{1}}\boldsymbol{x}^{\boldsymbol{v}_j}$ the corresponding term in the partial Stanley decomposition of $J$ is 
\begin{equation*}
    k[x_{1,m_1(\boldsymbol{x}^{\boldsymbol{v}_j})}, \dots, x_{1,n_1}, x_{2,m_2(\boldsymbol{x}^{\boldsymbol{u}_{1}})}, \dots, x_{2,n_2}, \dots, x_{s,m_s(\boldsymbol{x}^{\boldsymbol{u}_{1}})}, \dots, x_{s,n_s}](-a_1,\dots,-a_s).
\end{equation*}
This contributes 
\begin{equation}\label{corrhilb1}
    \dimn_k(k[x_{1,m_1(\boldsymbol{x}^{\boldsymbol{v}_j})}, \dots, x_{1,n_1}]_{t_1-a_1}) \binom{t_2-a_2+i_2}{i_2}\dots \binom{t_s-a_s+i_s}{i_s}
\end{equation}
to the Hilbert polynomial of $I$, where $i_j = n_j - m^j(\boldsymbol{x}^{\boldsymbol{u}_{1}})$. Since the monomials in $N_1^{\boldsymbol{i}}$ form a partial Stanley decomposition for $J_1^{\boldsymbol{i}}$ we have that 
\[\sum_{\boldsymbol{x}^{\boldsymbol{v}_j} \in N_1^{\boldsymbol{i}}} \dimn_k(k[x_{1,m_1(\boldsymbol{x}^{\boldsymbol{v}_j})}, \dots, x_{1,n_1}]_{t_1-a_1}) = P_{J_1^{\boldsymbol{i}}}(t_1) \]
for $t_1 \geq a_1$. Consequently, the expression \eqref{corrhilb1} becomes
\begin{equation}\label{corrhilb2}
        P_{J_{1}^{\boldsymbol{i}}}(t_1)\binom{t_2-a_2+i_2}{i_2}\dots\binom{t_s-a_s+i_s}{i_s}.
\end{equation}
We repeat the above procedure for all possible monomials $\boldsymbol{x}^{\boldsymbol{u}_l} \in L^{\boldsymbol{i}}$. Note that all monomials in $L^{\boldsymbol{i}}$ for a fixed $\boldsymbol{i}$ will have the same binomial coefficients appearing in the corresponding piece of the Hilbert polynomial \eqref{corrhilb2}. It therefore makes sense for us to add the $P_{J^{\boldsymbol{i}}_l}(t_1)$ together. By Lemma \ref{addhilb}, the sum of the Hilbert polynomials of the $J_{l}^{\boldsymbol{i}}$ agrees with the Hilbert polynomial for a new monomial ideal $J_{i_2 \dots i_s}$ generated in degrees $\leq a_1$. We will denote the Hilbert polynomial of $J_{i_2 \dots i_s}$ by $F_{i_2 \dots i_s}(t_1)$. We have
\begin{equation}\label{idek}
    F_{i_2 \dots i_s}(t_1) = \sum_{\boldsymbol{x}^{\boldsymbol{u}_l} \in L^{\boldsymbol{i}}} P_{J_l^{\boldsymbol{i}}}(t_1).
\end{equation}
Note that since $H_{J^{\boldsymbol{i}}_{l}}(b_1)=P_{J^{\boldsymbol{i}}_{l}}(b_1)$ for all $b_1 \geq a_1$, we have 
\[F_{i_2 \dots i_s}(b_1) = \sum_{\boldsymbol{x}^{\boldsymbol{u}_l} \in L^{\boldsymbol{i}}} P_{J_l^{\boldsymbol{i}}}(b_1) = \sum_{\boldsymbol{x}^{\boldsymbol{u}_l} \in L^{\boldsymbol{i}}} H_{J_l^{\boldsymbol{i}}}(b_1) = H_{J_{i_2 \dots i_s}}(b_1) \] for all $b_1 \geq a_1$.
By equation \eqref{idek} the monomials in $L^{\boldsymbol{i}}$ contribute in total 
\[F_{i_2 \dots i_s}(t_1) \binom{t_2-a_2+i_2}{i_2}\dots\binom{t_s-a_s+i_s}{i_s}\]
to the Hilbert polynomial of $I$. Varying $\boldsymbol{i} \in \mathcal{I}$ and repeating this procedure we obtain \[P_{I}(t_1,\dots,t_s)= \sum_{i_s=0}^{p_s} \dots \sum_{i_2=0}^{p_2} F_{i_2 \dots i_s}(t_1)\binom{t_2-a_2+i_2}{i_2}\dots \binom{t_s-a_s+i_s}{i_s},\] where each $F_{i_2 \dots i_s}(t_1)$ is the Hilbert polynomial of a monomial ideal $J_{i_2 \dots i_s}$ generated in degrees $\leq a_1$. 
\end{proof}
\begin{exmp}\label{exmp2}
    Let \[S = k[x_{1,0},x_{1,1},x_{2,0},x_{2,1},x_{2,2},x_{3,0},x_{3,1}]\]
    be the Cox ring of $ \mathbb{P}^1 \times \mathbb{P}^2 \times \mathbb{P}^1$. Let $J = (x_{1,0},x_{2,0},x_{2,1}) \subseteq S$, and observe that $J$ is a multilex ideal generated in degrees $\leq(2,2,2)$. We have $P_J(t_1,t_2,t_3)=t_3+1$, with $\maxdeg(P_J)=(0,0,1)$. By Lemma \ref{standecomp} we can write
\[P_I(t_1,t_2,t_3) = F_{00}(t_1) + F_{01}(t_1) \binom{t_3-2+1}{1},\] for some standard-graded Hilbert polynomials $F_{00}(t_1)$ and $F_{01}(t_1)$. It follows that we must have $F_{01}(t_1)=1$, $F_{00}(t_1)=2$.
\end{exmp}
The following terminology will be used throughout the rest of this section.
\begin{definition}\label{partial}
    Let $P_{I}(t_1,\dots,t_s)$ be a Hilbert polynomial of an ideal $I \subseteq S$ with $I^{\text{multilex}}$ generated in degrees $\leq (a_1,\dots,a_s)$. By Lemma \ref{standecomp} we can write
    \[P_I(t_1,\dots,t_s)=\sum_{i_s=0}^{p_s} \dots \sum_{i_2=0}^{p_2} F_{i_2 \dots i_s}(t_1)\binom{t_2-a_2+i_2}{i_2}\dots\binom{t_s-a_s+i_s}{i_s},\] where the $F_{i_2 \dots i_s}(t_1) \in \mathbb{Q}[t_1]$ are Hilbert polynomials of standard-graded ideals generated in degrees $\leq(a_1,\dots,a_s)$. We will extend the definition of the $F_{i_2 \dots i_s}(t_1)$, setting $F_{i_2 \dots i_s}(t_1) = 0$ for any $(i_2,\dots,i_s) \in \mathbb{N}^{s-1}$ with some $i_j > p_j$.
    For $1 < r < s$ and $\boldsymbol{b}=(b_1,\dots,b_{r-1},b_{r+1},\dots,b_{s}) \in \mathbb{N}^{s-1}$, define
    \[P_{\boldsymbol{b}}^r(t_r) = \sum_{i_r=0}^{p_r} \dots \sum_{i_2=0}^{p_2} F_{i_2 \dots i_{r} b_{r+1} \dots b_{s}}(b_1)\prod_{j=2}^{r-1}\binom{b_j-a_j+i_j}{i_j}\binom{t_r-a_r+i_r}{i_r} \in \mathbb{Q}[t_r].\] 
    If $r=1$ then we write $\boldsymbol{b}=(b_2,\dots,b_s) \in \mathbb{N}^{s-1}$ and define \[P^{1}_{\boldsymbol{b}}(t_1)= F_{b_2 \dots b_{s}}(t_1).\] For $r=s$ write $\boldsymbol{b}=(b_1,\dots,b_{s-1})$ and define \[P^{s}_{\boldsymbol{b}}(t_s) = P_I(b_1,\dots,b_{s-1},t_s).\]  Notice that if $b_i > p_{i}$ for some $i > r$ then $P^r_{\boldsymbol{b}}(t_r)=0$. For $1 \leq r < s$, $P_{\boldsymbol{b}}^{r}(t_r)$ is the coefficient of $\binom{t_{r+1}-a_{r+1}+b_{r+1}}{b_{r}} \dots \binom{t_s-a_s+b_{s}}{b_{s}}$ in $P_I(b_1,\dots,b_{r-1},t_r, \dots, t_s)$.
    
    For $r > 1$, fix $\boldsymbol{b}=(b_1,\dots,b_{r-1},b_{r+1},\dots,b_s) \in \mathbb{N}^{s-1}$ and $i_r \in \mathbb{N}$. Further, set $\boldsymbol{\hat{b}}=(b_1,\dots,b_{r-2},i_r,b_{r+1},\dots,b_s) \in \mathbb{N}^{s-1}$. Then the coefficient of $\binom{t_r - a_r + i_r}{i_r}$ in $P_{\boldsymbol{b}}^{r}(t_r)$ is given by $P_{\boldsymbol{\hat{b}}}^{r-1}(b_{r-1})$.
\end{definition}
\begin{remark}\label{degmatch}
Suppose $\maxdeg(P_I)=(p_1,\dots,p_s)$. For fixed $b_1,\dots,b_{r-1}$ with $b_i \geq a_i$ for all $i$, $P_{\boldsymbol{b}}^{r}(t_r)$ has degree $\leq p_r$ for all choices of $b_{r+1},\dots,b_{s}$. Further, we claim that there is at least one choice of $b_{r+1}, \dots, b_{s}$ such that equality is achieved. For $r=1$ this follows from the definition, so we focus on the case $r>1$. Observe that since $\maxdeg(P_I)=(p_1,\dots,p_s)$ there is at least one choice of $i_2,\dots,i_{r-1}$, $i_{r+1},\dots,i_s$ with $0 \leq i_j \leq p_j$ for all $j$ such that ${F}_{i_2 \dots i_{r-1} p_r i_{r+1} \dots i_s}(t_1) \neq 0$. Recall from Lemma \ref{standecomp} that $F_{i_2 \dots i_{r-1} p_r i_{r+1} \dots i_s}(b_1)=H_J(b_1)$ for some standard-graded ideal $J$ when $b_1 \geq a_1$. Thus, if ${F}_{i_2 \dots i_{r-1} p_r i_{r+1} \dots i_s}(b_1)=0$ then Macaulay's bound for the Hilbert function of an ideal tells us that ${F}_{i_2 \dots i_{r-1} p_r i_{r+1} \dots i_s}(u) \leq 0$ for $u \geq b_1$. This forces ${F}_{i_2 \dots i_{r-1} p_r i_{r+1} \dots i_s}(t_1)$ to be the zero polynomial, which is a contradiction. Thus ${F}_{i_2 \dots i_{r-1} p_r i_{r+1} \dots i_s}(b_1) \neq 0$. We may therefore fix $b_{r+1}=i_{r+1},\dots,b_s=i_s$ and observe that for this choice of $i_2,\dots,i_{r-1}$
\[F_{i_2 \dots i_{r-1} p_r b_{r+1} \dots b_s}(b_1)\binom{b_2-a_2+i_2}{i_2}\dots\binom{b_{r-1}-a_{r-1}+i_{r-1}}{i_{r-1}} > 0,\]
since the binomial coefficients are never zero for $b_i \geq a_i$. The coefficient of $\binom{t_r-a_r+p_r}{p_r}$ in $P_{\boldsymbol{b}}^{r}(t_r)$ is given by
\[\sum_{i_{r-1}=0}^{p_{r-1}} \dots \sum_{i_2=0}^{p_2} F_{i_2 \dots i_{r-1} p_r b_{r+1} \dots b_s} \binom{b_2-a_2+i_2}{i_2} \dots \binom{b_{r-1}-a_{r-1}+i_{r-1}}{i_{r-1}}.\]
Since $b_1 \geq a_1$, we have $F_{i_2 \dots i_{r_1} p_r b_{r+1} \dots b_{s}}(b_1) \geq 0$ for all choices of $i_2 \dots i_{r-1}$. As established there is one choice of $i_2, \dots, i_{r-1}$ such that this polynomial is strictly positive. it follows that the coefficient of $\binom{t_r-a_r+p_r}{p_r}$ in $P_{\boldsymbol{b}}^{r}(t_r)$ is strictly positive, and so $P_{\boldsymbol{b}}^{r}(t_r)$  has degree $p_r$ for this choice of $b_{r+1}, \dots, b_{s}$.
\end{remark}
\begin{exmp}\label{exmp3}
    Returning to Example \ref{exmp2}, let $P_{\boldsymbol{b}}^{r}(t_r)$ denote the polynomials as in Definition \ref{partial} for $P_J(t_1,t_2,t_3)=t_3+1$. We compute
    $P^{2}_{(2,0)}(t_2)$ and $P^{2}_{(2,1)}(t_2)$. Recall that
    $F_{01}(t_1) = 1$ and $ F_{00}(t_1)= 2$, and that $\maxdeg(P_J)=(0,0,1)$. We obtain
    \begin{align*}
        P^{2}_{(2,0)}(t_2) & = \sum_{i_2=0}^{0} F_{i_2 0}(2) \binom{t_2-2+i_2}{i_2}
        = F_{00}(2) =2, \\
        P^{2}_{(2,1)}(t_2) &= \sum_{i_2=0}^{0} F_{i_2 1}(2) \binom{t_2-2+i_2}{i_2} = F_{01}(2)=1.
    \end{align*}
\end{exmp}
\subsection{Extension of results on bilex ideals}
We now extend known results about bilex ideals to multilex ideals, using the following lemmas.
\begin{lemma}\label{addbistable}
    Let $I_1, \dots, I_l \subseteq T=k[x_0,\dots,x_n,y_0,\dots,y_m]$ be bilex ideals generated in degrees $\leq(a_1, a_2)$. Fix $\overline{a}_1 \in \mathbb{N}$. Then there exists $\overline{n} \in \mathbb{N}$ and a bilex ideal $J$ in $\cox(\mathbb{P}^{\overline{n}} \times \mathbb{P}^m)$ such that for all $b_2 \geq a_2$ \[H_{I_1}(\overline{a}_1,b_2)+\dots +H_{I_l}(\overline{a}_1,b_2)=H_J(1,b_2).\] Further we may assume $J$ is generated in degrees $\leq (1,a_2)$. We have 
    \[P_{I_1}(\overline{a}_1,t_2)+\dots+P_{I_l}(\overline{a}_1,t_2)=P_J(1,t_2)\] as polynomials in $\mathbb{Q}[t_2]$.
\end{lemma}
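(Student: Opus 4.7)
The plan is to reduce each $I_i$ to a multiset of lex ideals in $k[y_0,\ldots,y_m]$ by slicing above each $x$-monomial of bidegree $(\overline{a}_1,0)$, and then reassemble these into a single bilex ideal $J$ via an explicit descending chain of lex ideals. For each $i$ and each such monomial $\boldsymbol{x^u}$ in $T$, define the slice ideal
\[L^{(i,\boldsymbol{x^u})} \;=\; \{g \in k[y_0,\ldots,y_m] \mid \boldsymbol{x^u} g \in I_i\}.\]
Since $I_i$ is $y$-lex, $L^{(i,\boldsymbol{x^u})}_b$ is a lex segment in each degree $b$, so $L^{(i,\boldsymbol{x^u})}$ is a lex ideal; since $I_i$ is generated in degrees $\leq (a_1,a_2)$, a colon computation on minimal generators shows $L^{(i,\boldsymbol{x^u})}$ is generated in degrees $\leq a_2$. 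Clearly $\dim (I_i)_{(\overline{a}_1,b_2)} = \sum_{\boldsymbol{x^u}} \dim L^{(i,\boldsymbol{x^u})}_{b_2}$.

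The central step is to combine the $N := l\binom{\overline{a}_1+n}{n}$ lex ideals $\{L^{(i,\boldsymbol{x^u})}\}$ (counted with multiplicity, indexed by $\alpha = (i,\boldsymbol{x^u})$) into a descending chain $C^{(1)} \supseteq C^{(2)} \supseteq \cdots \supseteq C^{(N)}$ of lex ideals generated in degrees $\leq a_2$, with $\sum_{k=1}^N \dim C^{(k)}_{b_2} = \sum_{\alpha} \dim L^{(\alpha)}_{b_2}$ for every $b_2$. The key observation is that lex ideals in $k[y_0,\ldots,y_m]$ form a distributive sublattice under $(+,\cap)$: in each degree $d$ the lex segments $L_d$ and $L'_d$ are totally comparable, so $(L+L')_d$ and $(L\cap L')_d$ are lex segments whose dimensions are $\max(\dim L_d,\dim L'_d)$ and $\min(\dim L_d,\dim L'_d)$. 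The property of being generated in degrees $\leq a_2$ is preserved under $+$ (union of generators) and under $\cap$ (since for a lex ideal generated in degrees $\leq a_2$ one has $L_d = k[y_0,\ldots,y_m]_1 \cdot L_{d-1}$ for $d > a_2$, and this shadow property descends to intersections). Applying the standard lattice identity
\[C^{(k)} \;=\; \sum_{\substack{S\subseteq\{1,\ldots,N\}\\|S|=k}}\ \bigcap_{\alpha\in S} L^{(\alpha)}\]
for the $k$-th order statistic in a distributive lattice produces the desired chain, because $\dim C^{(k)}_{b_2}$ is the $k$-th largest among the values $\{\dim L^{(\alpha)}_{b_2}\}$.

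With the chain in hand, set $\overline{n}+1 = N$ and let $J \subseteq R := \cox(\mathbb{P}^{\overline{n}}\times\mathbb{P}^m) = k[x'_0,\ldots,x'_{\overline{n}},y_0,\ldots,y_m]$ be the monomial ideal generated by $\{x'_{k-1} g : 1 \leq k \leq N,\ g\text{ a minimal generator of } C^{(k)}\}$, so that each generator has bidegree $(1,\leq a_2)$. A monomial $\boldsymbol{x'^w}\boldsymbol{y^v}$ lies in $J$ iff $\boldsymbol{y^v}\in C^{(k^*)}$ where $k^* := \min\{k : x'_{k-1} \mid \boldsymbol{x'^w}\}$; combined with the elementary verification that $\boldsymbol{x'^w}\succ \boldsymbol{x'^{w'}}$ implies $k^*(\boldsymbol{x'^w}) \leq k^*(\boldsymbol{x'^{w'}})$, this yields that $J$ is bilex in every bidegree. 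Moreover $\dim J_{(1,b_2)} = \sum_k \dim C^{(k)}_{b_2}$, so a direct dimension count using $\overline{n}+1 = l\binom{\overline{a}_1+n}{n}$ gives $H_J(1,b_2) = \sum_{i=1}^l H_{I_i}(\overline{a}_1,b_2)$ for all $b_2$, and the polynomial identity follows by evaluating at infinitely many $b_2$. The main obstacle is the chain construction of Step~2: naive per-degree sorting of the $L^{(\alpha)}$ fails because two lex ideals can be mutually incomparable as ideals (for instance $(y_0^2)$ and $(y_0^3,y_0^2y_1,y_0y_1^3)$ in $k[y_0,y_1]$), and the distributive-lattice order-statistic formula is the key tool for producing a single canonical chain.
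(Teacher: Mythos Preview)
Your proof is correct, but it takes a more elaborate route than the paper's. Both arguments begin by slicing each $I_i$ above every $x$-monomial of degree $\overline{a}_1$ to obtain $N=l\binom{\overline{a}_1+n}{n}$ lex ideals $L^{(\alpha)}\subseteq k[y_0,\dots,y_m]$, each generated in degrees $\leq a_2$. The divergence is in how these are reassembled into a descending chain. You invoke the distributive-lattice structure of lex ideals and form the order-statistic ideals $C^{(k)}=\sum_{|S|=k}\bigcap_{\alpha\in S}L^{(\alpha)}$, obtaining a chain of \emph{full} lex ideals (comparable in every degree) and hence an equality $H_J(1,b_2)=\sum_i H_{I_i}(\overline{a}_1,b_2)$ valid for all $b_2\geq 0$. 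The paper instead simply relabels the $z$-variables so that the lex segments $L^{(\alpha)}_{a_2}$ are nonincreasing in size, and sets $J=(M_{a_2})$. The point you may have overlooked is that this naive sort \emph{does} suffice for the statement as written: since each $L^{(\alpha)}$ is generated in degrees $\leq a_2$, one has $L^{(\alpha)}_d=S_1\cdot L^{(\alpha)}_{d-1}$ for $d>a_2$, so containment at degree $a_2$ propagates to all $d\geq a_2$; and since $J$ is generated in bidegree $(1,a_2)$, bilexness in bidegrees with $b_2<a_2$ is vacuous. Your counterexample $(y_0^2)$ versus $(y_0^3,y_0^2y_1,y_0y_1^3)$ exhibits incomparability only in degrees strictly below the common generation bound, which is irrelevant here. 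So your lattice machinery buys a marginally stronger conclusion (all $b_2$ rather than $b_2\geq a_2$) at the cost of extra work; the paper's direct sort at degree $a_2$ is all that is needed.
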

\begin{proof}
    Fix $d= \binom{\overline{a}_1+n}{\overline{a}_1}$ and let $\overline{n}=ld-1$. Let $T'=k[z_0,\dots,z_{\overline{n}},y_0,\dots,y_m]$, with $\deg(z_i)=(1,0)$ and $\deg(y_j)=(0,1)$ Let $\{\boldsymbol{x}^{\boldsymbol{u}_{1}},\dots,\boldsymbol{x}^{\boldsymbol{u}_d}\}$ be the set of monomials in $T_{(\overline{a}_1,0)}$, and consider the family of maps of vector spaces
    \begin{align*}
        \varphi_{i} : T_{(\overline{a}_1,0)} & \rightarrow T'_{(1,0)} \\
        \boldsymbol{x}^{\boldsymbol{u}_j} & \mapsto x_{j-1+(i-1)d}
    \end{align*}
    for $1 \leq i \leq l$. Define the set of monomials \[M_{b_2}^{i} = \{\varphi_i(\boldsymbol{x^u})\boldsymbol{y^v} \mid \boldsymbol{x^u} \in T_{(\overline{a}_1,0)}, \boldsymbol{y^v} \in T_{(0,b_2)}, \boldsymbol{x^u}\boldsymbol{y^v} \in I_i \} \subseteq T'\] for $1 \leq i \leq l$, $b_2 \in \mathbb{N}$. Notice that for $i \neq j$, $M_{b_2}^{i}$ is disjoint from $M_{b_2}^{j}$, and that each $M_{b_2}^{i}$ is bilex when viewed as an ideal in the ring $k[z_{(i-1)d}, \dots z_{id-1},y_0,\dots,y_m]$. 
    Let $M_{b_2} = \bigcup_{i=1}^{l} M_{b_2}^i$. 
    Then 
    \begin{enumerate}[\normalfont(i)]
        \item $M_{b_2}$ is bilex for the correct relabelling of the $z_0,\dots,z_{ld-1}$,
        \item $H_{I_1}(\overline{a}_1,b_2) + \dots + H_{I_l}(\overline{a}_1,b_2) = |T'_{(1,b_2)}|-|M_{b_2}|$,
        \item for $J=(M_{a_2}) \subseteq T'$, $|T'_{(1,b_2)}|-|M_{b_2}| = H_J(1,b_2)$ for all $b_2 \geq a_2$.
    \end{enumerate}
    To see (i), note that the monomials $N_{b_2}^{i}=\{\boldsymbol{y^v} \in T_{(0,b_2)} \mid z_i\boldsymbol{y^v} \in M_{b_2}\}$ form a lexsegment for any $z_i \in T'$. It follows that $M_{b_2}$ is $y$-lex as in Definition \ref{xlex}, but may not be $x$-lex. However, we can relabel the $z_i$ so that the $N_{b_2}^i$ are (non strictly) descending  in size and thus obtain a bilex set. Part (ii) follows from the definition of $M_{b_2}$, and the disjointness of the $M_{b_2}^i$. Finally, part (iii) holds since each $I_i$ is generated in degrees $\leq (a_1,a_2)$. The result then follows, since $J$ is a bilex ideal after relabelling variables.
\end{proof}
\begin{lemma}\label{justbistable}
    Let $I \subseteq S$ be an ideal, homogeneous with respect to the $\mathbb{Z}^{s}$-grading, with $I^{\text{multilex}}$ generated in degrees $\leq (a_1,\dots,a_s)$. As in Lemma \ref{standecomp} we write 
    \[P_I(t_1,\dots,t_s)=\sum_{i_s=0}^{p_s} \dots \sum_{i_2=0}^{p_2} F_{i_2 \dots i_s}(t_1)\binom{t_2-a_2+i_2}{i_2}\dots\binom{t_s-a_s+i_s}{i_s},\]
    where each $F_{i_2 \dots i_s}$ is the Hilbert polynomial of a standard graded ideal. Let $P_{\boldsymbol{b}}^{r}(t_r) \in \mathbb{Q}[t_r]$ denote a polynomial as in definition \ref{partial} for $P_I(t_1,\dots,t_s)$. Fix an integer $r$ with $2 \leq r \leq s$ and fix $\boldsymbol{b}=(b_1,\dots,b_{r-1},b_{r+1},\dots,b_s) \in \mathbb{N}^{s-1}$ with $b_i \geq a_i$ for all $i < r$. Then there exists $m \in \mathbb{N}$ and a bilex ideal $K \subseteq \cox(\mathbb{P}^m \times \mathbb{P}^{n_r})$ generated in degrees $\leq(1,a_r)$ with $P_{\boldsymbol{b}}^{r}(t_r)=P_K(1,t_r)$.
\end{lemma}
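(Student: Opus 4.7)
The plan is to reduce to the two-factor bilex setting by fixing monomials in all factors except the first and the $r$-th, and then to combine the resulting bilex ideals via Lemma \ref{addbistable}. I begin by replacing $I$ with $J = I^{\text{multilex}}$, which has the same Hilbert polynomial (Theorem \ref{multilex}) and is in particular both $x_1$- and $x_r$-lex. Let $T = k[x_{1,0}, \dots, x_{1,n_1}, x_{r,0}, \dots, x_{r,n_r}]$ denote the Cox ring of $\mathbb{P}^{n_1} \times \mathbb{P}^{n_r}$.

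For each monomial $\boldsymbol{y} = \prod_{j \neq 1, r} \boldsymbol{y}_j$ with $\boldsymbol{y}_j \in S_{a_j \boldsymbol{e}_j}$, define $M_{\boldsymbol{y}} = \{\boldsymbol{z} \in T_{(a_1, a_r)} : \boldsymbol{y}\boldsymbol{z} \in J_{(a_1, \dots, a_s)}\}$. The $x_1$- and $x_r$-lex properties of $J$ force $M_{\boldsymbol{y}}$ to be a bilex set in $T$, so it generates a bilex ideal $K_{\boldsymbol{y}} \subseteq T$ with generators of degree $(a_1, a_r)$.

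Splitting each basis monomial $\boldsymbol{x^u} \in S_{(a_1, \dots, a_s)} \setminus J_{(a_1, \dots, a_s)}$ into internal part $\boldsymbol{z} \in T$ and external part $\boldsymbol{y}$, the partial Stanley decomposition of Lemma \ref{standec} yields
\[H_J(t_1, \dots, t_s) = \sum_{\boldsymbol{y}} H_{K_{\boldsymbol{y}}}(t_1, t_r) \prod_{j \neq 1, r} \binom{t_j - a_j + n_j - m_j(\boldsymbol{y}_j)}{n_j - m_j(\boldsymbol{y}_j)}\]
for $t_j \geq a_j$. Substituting $t_1 = b_1$ and $t_j = b_j$ for $j < r$ with $j \neq 1$, and then extracting the coefficient of $\prod_{j > r} \binom{t_j - a_j + b_j}{b_j}$ (using that $\{\binom{t_j - a_j + i}{i}\}_{i \in \mathbb{N}}$ is a $\mathbb{Q}$-basis for $\mathbb{Q}[t_j]$, so extraction selects those $\boldsymbol{y}$ with $n_j - m_j(\boldsymbol{y}_j) = b_j$ for every $j > r$), I arrive at the key identity
\[P_{\boldsymbol{b}}^r(t_r) = \sum_{\boldsymbol{y}} c_{\boldsymbol{y}} \, P_{K_{\boldsymbol{y}}}(b_1, t_r),\]
where $c_{\boldsymbol{y}} = \prod_{j < r, j \neq 1} \binom{b_j - a_j + n_j - m_j(\boldsymbol{y}_j)}{n_j - m_j(\boldsymbol{y}_j)} \in \mathbb{N}$ since $b_j \geq a_j$.

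Finally, applying Lemma \ref{addbistable} with $\overline{a}_1 = b_1$ to the finite multiset in which each $K_{\boldsymbol{y}}$ appears $c_{\boldsymbol{y}}$ times produces a bilex ideal $K \subseteq \cox(\mathbb{P}^m \times \mathbb{P}^{n_r})$ generated in degrees $\leq (1, a_r)$ with $P_K(1, t_r) = P_{\boldsymbol{b}}^r(t_r)$. The edge case $b_j > n_j$ for some $j > r$ makes the selecting condition impossible, so the sum is empty, $P_{\boldsymbol{b}}^r(t_r) = 0$, and $K$ can be taken to be generated by all degree-$(1,0)$ variables. The main obstacle is the coefficient-extraction step: aligning the partial Stanley decomposition with Definition \ref{partial} requires carefully verifying that the factorization decouples exactly so as to expose $P_{K_{\boldsymbol{y}}}(b_1, t_r)$ as a single factor.
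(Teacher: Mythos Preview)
Your proof is correct and follows the same overall strategy as the paper's proof---pass to $J=I^{\text{multilex}}$, fix monomials in all but two factors to obtain bilex ideals, and assemble via Lemma~\ref{addbistable}---but with a genuinely different organisation. The paper pairs factors $r-1$ and $r$: for $r<s$ it first fixes monomials in factors $r+1,\dots,s$ (selecting those with $m^j=n_j-b_j$) to obtain multilex ideals $J_i$ in factors $1,\dots,r$, then fixes monomials in factors $1,\dots,r-2$ at degrees $b_1,\dots,b_{r-2}$ to obtain bilex ideals $J_{i,j}$ in factors $r-1,r$, and applies Lemma~\ref{addbistable} twice. You instead pair factors $1$ and $r$, split by monomials in all remaining factors at degrees $a_j$ in a single pass, and absorb the evaluations at $b_2,\dots,b_{r-1}$ as the nonnegative integer multiplicities $c_{\boldsymbol{y}}$, allowing a single invocation of Lemma~\ref{addbistable} on a multiset. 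Your route is more uniform and avoids the two-stage case split; the paper's route makes the intermediate multilex ideals $J_i$ explicit, which is slightly more concrete but not needed here. One small point: your edge-case remark covers $b_j>n_j$; the paper's Definition~\ref{partial} gives $P^r_{\boldsymbol b}=0$ already when $b_j>p_j$, and for $p_j<b_j\le n_j$ your sum is nonempty but each $K_{\boldsymbol y}$ has $M_{\boldsymbol y}=T_{(a_1,a_r)}$ (since no monomial with that $m^j$-value survives in $S\setminus J$), so each $P_{K_{\boldsymbol y}}(b_1,t_r)=0$ and the identity still yields zero.
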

\begin{proof}
Recall from Definition \ref{partial} that if $b_i > p_{i}$ for any $i > r$ then $P_{\boldsymbol{b}}^r(t_r)=0$, so we focus on the case that $b_i \leq p_{i}$ for $i > r$. We pass from $I$ to $J=I^{\text{multilex}}$, which is generated in degrees $\leq (a_1,\dots,a_s)$. We begin with the case $r=s$. For a given monomial $\boldsymbol{x}^{\boldsymbol{v}_j} \in S_{(b_1,\dots,b_{s-2},0,0)}$ set $M_j \subseteq S_{(0,\dots,0,b_{s-1},a_s)}$ to be the set of monomials such that $M_j \boldsymbol{x}^{\boldsymbol{v}_j} \subseteq J_{(b_1,\dots,b_{s-1},a_s)}$, and denote by $J_j \subseteq k[x_{s-1,0},\dots,x_{s-1,n_{s-1}},x_{s,0},\dots,x_{s,n_s}]$ the bigraded ideal generated by $M_j$. Since $J$ is multilex, $J_{j}$ is bilex, and generated in degrees $\leq(b_{s-1},a_s)$. Further, we have
\[P_{J}(b_1,\dots,b_{s-2},t_{s-1},t_s) = \sum_{\boldsymbol{x}^{\boldsymbol{v}_j} \in S_{(b_1,\dots,b_{s-2},0,0)}} P_{J_{j}}(t_{s-1},t_s).\]
Applying Lemma \ref{addbistable}, we observe that there exists $m \in \mathbb{N}$ such that 
\[P_{J}(b_1,\dots,b_{s-1},t_s) = \sum_{\boldsymbol{x}^{\boldsymbol{v}_j} \in S_{(b_1,\dots,b_{s-2},0,0)}} P_{J_{j}}(b_{s-1},t_s) = P_K(1,t_s)\]
for an ideal $K \subseteq k[x_0,\dots,x_m,y_0,\dots,y_{n_s}]$.
For the case $2 \leq r < s$ the polynomial $P^{r}_{\boldsymbol{b}}(t_r)$ is the coefficient of \[\binom{t_{r+1}-a_{r+1}+b_{r+1}}{b_{r+1}}\dots\binom{t_s-a_s+b_{s}}{b_{s}}\] in $P_I(b_1,\dots,b_{r-1},t_r,\dots,t_s)$. Consider the set of monomials \[M = \{\boldsymbol{x^u} \in S_{(0,\dots,0,a_{r+1},\dots,a_s)} \mid m^j(\boldsymbol{x^{u}})=n_j-b_{j} \text{ for all } r < j \leq s\}.\] A monomial $\boldsymbol{x^u} \in S_{(a_1,\dots,a_s)} \setminus J_{(a_1,\dots,a_s)}$ which satisfies $m^j(\boldsymbol{x^u})=n_j-b_{j}$ for all $r < j \leq s$ will contribute a term to the partial Stanley decomposition whose associated Hilbert polynomial is of the form \[G(t_1,\dots,t_r) \binom{t_{r+1}-a_{r+1}+b_{r+1}}{b_{r+1}}\dots\binom{t_s-a_s+b_{s}}{b_{s}}\] for some $G(t_1,\dots,t_r)$. Therefore, to find the coefficient of \[\binom{t_{r+1}-a_{r+1}+b_{r+1}}{b_{r+1}}\dots\binom{t_s-a_s+b_{s}}{b_{s}}\] in $P_I(t_1,\dots,t_r)$ we focus on monomials whose degree $(0,\dots,0,a_{r+1},\dots,a_s)$ part is in the set $M$. For a given $\boldsymbol{x}^{\boldsymbol{u}_i} \in M$ consider the set of monomials $M_i \subseteq S_{(a_1,\dots,a_{r-1},a_r,0 \dots, 0)}$, such that $\boldsymbol{x}^{\boldsymbol{u}_i}M_i \subseteq J_{(a_1,\dots,a_s)}$. Since $J$ is multilex the set $M_i$ generates a multilex ideal \[J_i \subseteq k[x_{1,0},\dots,x_{1,n_1},\dots,x_{r,0} \dots,x_{r,n_r}],\]
 generated in degrees $\leq(a_1,\dots,a_r)$. Varying $\boldsymbol{x}^{\boldsymbol{u}_i}$ we get a collection of these multilex ideals $J_i$. For a given $\boldsymbol{x}^{\boldsymbol{u}_i}$ let $N_i$ denote the degree $(a_1,\dots,a_r,0,\dots,0)$ monomials not in $M_i$. The monomials in $N_i$ form a partial Stanley decomposition for the strongly multistable ideal $J_i$. Further, the monomials $\bigcup_{\boldsymbol{x}^{\boldsymbol{u}_i} \in M} \boldsymbol{x}^{\boldsymbol{u}_i}N_i$ are exactly the monomials in $S_{(a_1,\dots,a_s)} \setminus J_{(a_1,\dots,a_s)}$ satisfying $m^j(\boldsymbol{x^u})=n_j-b_{j}$ for all $r < j \leq s$.
 The pieces of the partial Stanley decomposition of $I$ associated to the monomials in $\boldsymbol{x}^{\boldsymbol{u}_i}N_i$ contribute \[P_{J_i}(t_1,\dots,t_r)\binom{t_{r+1}-a_{r+1}+b_{r+1}}{b_{r+1}}\dots\binom{t_s-a_s+b_{s}}{b_{s}}\] to the Hilbert polynomial of $I$.
It follows that \[P^{r}_{\boldsymbol{b}}(t_r) = \sum_{\boldsymbol{x}^{\boldsymbol{u}_i}\in M} P_{J_i}(b_1,\dots,b_{r-1},t_r).\] 
For a fixed $J_i$ and fixed monomial $\boldsymbol{x}^{\boldsymbol{v}_j} \in S_{(b_1,\dots,b_{r-2},0,\dots,0)}$ define the bigraded ideal \[J_{i,j} \subseteq k[x_{r-1,0}, \dots, x_{r-1,n_{r-1}},x_{r,0},\dots,x_{r,n_r}] ,\] which is generated by the set of monomials $M_{i,j} \subseteq S_{r_{(0,\dots,0,b_{r-1},a_r,0,\dots,0)}}$ such that $M_{i,j}\boldsymbol{x}^{\boldsymbol{v}_j} \subseteq J_{i_{(b_1,\dots,b_{r-1},a_r)}}$. The proof is now similar to the $r=s$ case. The ideals $J_{i,j}$ are bilex since each $J_i$ is multilex. Further, we have \[P_{J_i}(b_1,\dots,b_{r-2},t_{r-1},t_r) = \sum_{\boldsymbol{x}^{\boldsymbol{v}_j} \in S_{r_{(b_1,\dots,b_{r-2},0,0)}}} P_{J_{i,j}}(t_{r-1},t_r).\]
Now applying Lemma \ref{addbistable}, there exists $m_i \in \mathbb{N}$ such that \[P_{J_{i}}(b_1,\dots,b_{r-1},t_r) = \sum_{\boldsymbol{x}^{\boldsymbol{v}_j} \in S_{r_{(b_1,\dots,b_{r-2},0,0)}}}P_{J_{i,j}}(b_{r-1},t_r)=P_{K_i}(1,t_r)\] for a bilex ideal $K_i \subseteq k[x_0,\dots,x_{m_i},y_0,\dots,y_{n_r}]$, generated in degrees $\leq(1,a_r)$. Again applying Lemma \ref{addbistable}, there exists $m \in \mathbb{N}$ such that
\[P_{\boldsymbol{b}}^{r}(t_r) = \sum_{\boldsymbol{x}^{\boldsymbol{u}_i} \in M} P_{J_i}(b_1,\dots,b_{r-1},t_r) = \sum_{\boldsymbol{x}^{\boldsymbol{u}_i} \in M} P_{K_i}(1,t_r)=P_{K}(1,t_r)\]
for a bilex ideal $K \subseteq k[x_0,\dots,x_m,y_0,\dots,y_{n_r}]$ generated in degrees $\leq (1,a_r)$.
\end{proof}
\begin{remark}\label{justbilex}
Lemma \ref{justbistable} shows that for a multilex ideal $I$ generated in degrees $\leq (a_1,\dots,a_s)$ and a collection $\{b_1,\dots,b_{s-1}\}$ with $b_i \geq a_i$ there is a bilex ideal $K$ generated in degrees $\leq(1,a_s)$ such that \[P_I(b_1,\dots,b_{s-1},t_s) = P_K(1,t_s).\] We may further assume that $K$ lies in $\cox(\mathbb{P}^m \times \mathbb{P}^{n_s})$ for some $m \in \mathbb{N}$. The choice of fixing $t_1, \dots t_{s-1}$ was arbitrary, and a similar result holds for any $P(b_1,\dots,b_{i-1},t_i,b_{i+1},\dots,b_s)$.
\end{remark}
We will need the following result of Crona for bigraded ideals.
\begin{theorem}[\cite{crona2006standard}*{Theorem 4.10}]\label{Gotz}
 Let $T=k[x_0,\dots,x_n,y_0,\dots,y_m]$ be the Cox ring of $\mathbb{P}^n \times \mathbb{P}^m$ with the usual $\mathbb{Z}^2$-grading. Let $I$ be a bigraded homogeneous ideal of $T$. Let 
    \begin{equation}
        H_{I}(b_1,b_2) = \binom{n+b_1}{n}\mathfrak{q}(b_1) + \mathfrak{r}(b_1),
    \end{equation}
    be the Euclidean division of $H_{I}(b_1,b_2)$ by $\binom{n+b_1}{n}$. We introduce the notation 
    \[H_I(b_1,b_2)^{\left<b_1\right>_1} = \binom{b_1+1+n}{n}\mathfrak{q}(b_1) + \mathfrak{r}(b_1)^{\left<b_1\right>}.\]
    Fix $b_2 \in \mathbb{Z}_{>0}$. Suppose there is a strongly bistable ideal with the same Hilbert function as $I$ generated in degrees $\leq (a_1, a_2)$. Then
\begin{enumerate}[\normalfont(i)]
    \item \[H_{I}(b_1+1,b_2) = H_I(b_1,b_2)^{\left<b_1\right>_1}\] for $b_1 \gg 0$.
    \item For all $u \geq a_1$, if \[H_{I}(u+1,b_2) = H_I(u,b_2)^{\left<u\right>_1}\] then \[H_{I}(b_1+1,b_2) = H_I(b_1,b_2)^{\left<b_1\right>_1}\] for all $b_1 \geq u$. 
\end{enumerate}
 Similar results hold for fixed $b_1$ and varying $b_2$. In this case, let
 $H_I(b_1,b_2) = \binom{m+b_2}{m}\mathfrak{q}(b_2) + \mathfrak{r}(b_2)$ be the Euclidean division of $H_I(b_1,b_2)$ by $\binom{m+b_2}{m}$, and set \[H_I(b_1,b_2)^{\left<b_2\right>_2} = \binom{m+b_2+1}{m}\mathfrak{q}(b_2) + \mathfrak{r}(b_2)^{\left<b_2\right>}.\]
\end{theorem}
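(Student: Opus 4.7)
The plan is to follow the strategy used throughout Section \ref{two}: reduce to a strongly bistable ideal, decompose the Hilbert function via the partial Stanley decomposition, and invoke the standard-graded version of persistence on a carefully-constructed auxiliary ideal. First I would replace $I$ by $\bigin(I)$, which is strongly bistable, shares the Hilbert function of $I$, and by hypothesis is generated in degrees $\leq (a_1, a_2)$. For fixed $b_2 \geq a_2$, the partial Stanley decomposition of $T/\bigin(I)$ above degree $(a_1,a_2)$ organizes the basis of $(T/\bigin(I))_{(b_1,b_2)}$ according to the pair of indices attached to each generating monomial $\boldsymbol{x^u} \in T_{(a_1,a_2)} \setminus \bigin(I)_{(a_1,a_2)}$. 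The pieces with $m_x(\boldsymbol{x^u}) = 0$ each contribute a full-rank $\binom{n+b_1}{n}$ block in the $x$-direction and together account for the quotient $\binom{n+b_1}{n}\mathfrak{q}(b_1)$, while the pieces with $m_x(\boldsymbol{x^u}) > 0$ contribute strictly smaller blocks and account for the remainder $\mathfrak{r}(b_1)$.

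Next I would realize $\mathfrak{r}(b_1)$ as the Hilbert function of a standard-graded monomial ideal $J \subseteq k[x_0,\dots,x_n]$ generated in degrees $\leq a_1$. This is the step where a construction in the spirit of Lemma \ref{addhilb} or Lemma \ref{addbistable} is required: the contributions from the various generators of the partial Stanley decomposition are combined into a single monomial ideal via a disjointification-and-relabelling trick on the $x$-variables, with the strongly stable structure guaranteeing that the combined ideal has the desired generation bound. Once $\mathfrak{r}$ is identified as $H_J$, the maximal growth hypothesis $H_I(u+1,b_2) = H_I(u,b_2)^{\left<u\right>_1}$ is equivalent to $H_J(u+1) = H_J(u)^{\left<u\right>}$, and symmetrically for any subsequent value of $b_1$.

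Part (ii) then follows by applying Gotzmann's original persistence theorem (Theorem \ref{persistence}) to $J$: since $u \geq a_1$ bounds the generators of $J$, the equality $H_J(b_1+1) = H_J(b_1)^{\left<b_1\right>}$ propagates to all $b_1 \geq u$, which translates back to $H_I(b_1+1,b_2) = H_I(b_1,b_2)^{\left<b_1\right>_1}$ for all $b_1 \geq u$. Part (i) follows from the fact that $H_I(b_1,b_2)$ eventually agrees with the Hilbert polynomial $P_I(b_1,b_2)$, and the standard binomial expansion of a Hilbert polynomial automatically satisfies the maximal growth identity once $b_1$ exceeds the Gotzmann number. The symmetric statements for fixed $b_1$ and varying $b_2$ are proved identically by swapping the roles of the two groups of variables. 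The main obstacle is the careful realization of $\mathfrak{r}(b_1)$ as the Hilbert function of a standard-graded ideal whose generators are bounded in degree by $a_1$: a priori one only knows that $\mathfrak{r}(b_1)$ is the remainder of a Euclidean division, so extracting an honest monomial ideal with the needed generation bound is where the strongly bistable structure must be used most delicately.
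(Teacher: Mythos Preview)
The paper does not prove this theorem; it is quoted as Crona's result \cite{crona2006standard}*{Theorem 4.10} and invoked as a black box in order to prove the multigraded extension Theorem \ref{cronaformore}. There is therefore no paper proof against which to compare your proposal.

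Your outline is nonetheless in the spirit of Crona's own argument, which also passes to a strongly bistable ideal and reduces to standard-graded persistence via the partial Stanley decomposition. One point to tighten: the identification of $\mathfrak{q}$ with the pieces having $m_x(\boldsymbol{x^u})=0$ in the decomposition at degree $(a_1,a_2)$ is not accurate as stated, since such a piece contributes $\binom{n+b_1-a_1}{n}$, not $\binom{n+b_1}{n}$. For fixed $b_2$ one should instead read the diagram row by row at degree $(\cdot,b_2)$: a $y$-monomial $\boldsymbol{y^v}$ gives a full row, contributing exactly $\binom{n+b_1}{n}$, precisely when every $\boldsymbol{x^w}\boldsymbol{y^v}$ with $\boldsymbol{x^w}\in T_{(b_1,0)}$ lies outside $I$; the count of such rows is $\mathfrak{q}$, constant once $b_1\geq a_1$. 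The partial rows then furnish $\mathfrak{r}(b_1)$, and your instinct that assembling them into a single standard-graded ideal generated in degrees $\leq a_1$ is the delicate step is correct.
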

It is natural to use Remark \ref{justbilex} to extend this result to the product of $s$ projective spaces.
\begin{theorem}\label{cronaformore}
    Let $I \subseteq S$ be a homogeneous ideal, whose Hilbert function agrees with that of some multilex ideal generated in degrees $\leq(a_1, \dots, a_s)$. Fix $i \in \mathbb{N}$ with $1 \leq i \leq s$. For fixed $b_1 \geq a_1, \dots, b_{i-1} \geq a_{i-1}, b_{i+1} \geq a_{i+1}, \dots b_s \geq a_s$ consider the Euclidean division of $H_I(b_1,\dots,b_s)$ by $\binom{n_i+b_i}{n_i}$, which we write
    \[H_I(b_1,\dots,b_s) = \binom{n_i+b_i}{n_i}\mathfrak{q}(b_i) + \mathfrak{r}(b_i).\] As in Theorem \ref{Gotz}, we set 
    \[H_I(b_1,\dots,b_s)^{\left<b_i\right>_i} = \binom{n_i+b_i+1}{n_i}\mathfrak{q}(b_i)+\mathfrak{r}(b_i)^{\left<b_i\right>}.\]
    Then we have 
    \begin{enumerate}[\normalfont(i)]
        \item \[H_I(b_1,\dots,b_{i-1},b_i+1,b_{i+1},\dots,b_s) = H_I(b_1,\dots,b_s)^{\left<b_i\right>_i}\] for $b_i \gg 0$.
        \item For all $u \geq a_i$ if 
        \[H_I(b_1,\dots,b_{i-1},u+1,b_{i+1},\dots,b_s) = H_I(b_1,\dots,b_{i-1},u,b_{i+1},\dots,b_s)^{\left<u\right>_i}\] then
        \[H_I(b_1,\dots,b_{i-1},b_i+1,b_{i+1},\dots,b_s) = H_I(b_1,\dots,b_s)^{\left<b_i\right>_i}\] for all $b_i \geq u$.
    \end{enumerate}
\end{theorem}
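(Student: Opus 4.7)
The plan is to reduce the multigraded situation to the bigraded setting and then apply Crona's Theorem \ref{Gotz}. Since $H_I$ agrees with the Hilbert function of a multilex ideal (by hypothesis, or equivalently by Theorem \ref{multilex}), we may assume $I$ is itself multilex and generated in degrees $\leq(a_1,\dots,a_s)$; the conclusions of Theorem \ref{cronaformore} concern only $H_I$, so this substitution is harmless. By symmetry among the factors of $\mathbb{P}^{n_1}\times\dots\times\mathbb{P}^{n_s}$, it suffices to treat one index $i$, so fix such $i$ together with values $b_j\geq a_j$ for $j\neq i$.

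First I would invoke the symmetric form of Lemma \ref{justbistable} noted in Remark \ref{justbilex}, producing an integer $m\in\mathbb{N}$ and a bilex ideal $K\subseteq\cox(\mathbb{P}^m\times\mathbb{P}^{n_i})$ generated in degrees $\leq(1,a_i)$ with
\[
P_I(b_1,\dots,b_{i-1},t_i,b_{i+1},\dots,b_s)=P_K(1,t_i)
\]
as polynomials in $t_i$. Next I would upgrade this equality of polynomials to an equality of Hilbert functions in the range $t_i\geq a_i$. The partial Stanley decomposition from Lemma \ref{standec} (as used in Lemma \ref{standecomp}) gives $H_I=P_I$ whenever every argument exceeds the corresponding $a_j$, and the same decomposition applied to $K$ with $s=2$ and $(1,a_i)$ in place of $(a_1,a_2)$ gives $H_K(1,t_i)=P_K(1,t_i)$ for $t_i\geq a_i$. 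Combining these,
\[
H_I(b_1,\dots,b_{i-1},t_i,b_{i+1},\dots,b_s)=H_K(1,t_i)\quad\text{for all }t_i\geq a_i.
\]

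With this identification in hand, I would finish by applying Theorem \ref{Gotz} to $K$ in the version where the first degree is fixed at $1$ and the second varies: part (i) of that theorem yields maximal growth $H_K(1,t_i+1)=H_K(1,t_i)^{\left<t_i\right>_2}$ for $t_i\gg 0$, and part (ii) supplies persistence from any threshold $u\geq a_i$ (the generating bound of $K$ in its second variable). Translating each of these statements back across the identification yields parts (i) and (ii) of Theorem \ref{cronaformore}. The main technical point to check is that the ``$\left<\cdot\right>_i$'' operation on $H_I$ coincides with the ``$\left<\cdot\right>_2$'' operation on $H_K(1,\cdot)$; this holds because both are defined by Euclidean division by the same binomial $\binom{n_i+t_i}{n_i}$, the factor $\mathbb{P}^{n_i}$ being preserved when we pass from $I$ to $K$. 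Modulo this bookkeeping, the whole argument is a routine transfer of Crona's bivariate result across Lemma \ref{justbistable}.
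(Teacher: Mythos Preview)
Your proposal is correct and follows essentially the same route as the paper: reduce to a bilex ideal $K\subseteq\cox(\mathbb{P}^m\times\mathbb{P}^{n_i})$ generated in degrees $\leq(1,a_i)$ via Remark \ref{justbilex}, identify $H_I(b_1,\dots,t_i,\dots,b_s)$ with $H_K(1,t_i)$ for $t_i\geq a_i$, and invoke Theorem \ref{Gotz}. The paper's proof is terser and leaves implicit the two bookkeeping points you spell out (that $H_K(1,t_i)=P_K(1,t_i)$ in the relevant range, and that the $\left<\cdot\right>_i$ and $\left<\cdot\right>_2$ operations coincide because both divide by $\binom{n_i+t_i}{n_i}$), but the argument is the same.
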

\begin{proof}
Recall that \[P_I(b_1,\dots,b_{i-1},u,b_{i+1},\dots,b_s)=H_I(b_1,\dots,b_{i-1},u,b_{i+1},\dots,b_s)\] when $u \geq a_i$ and $b_j \geq a_j$ for all possible $j$. By Remark \ref{justbilex} there exists $m \in \mathbb{N}$ such that $H_I(b_1,\dots,b_{i_1},u,b_{i+1},\dots,b_s) = P_K(1,u)$ for some bilex ideal $K$ in $\cox(\mathbb{P}^m \times \mathbb{P}^{n_i})$, generated in degrees $\leq (1,a_i)$. The result then follows by Theorem \ref{Gotz}.
\end{proof}
\begin{remark}
A similar application of Remark \ref{justbilex} allows us to extend Aramova, Crona and De Negri's generalisation of Macaulay's bound on the Hilbert function of a bigraded ideal~\cite{aramova2000bigeneric}*{Theorem 4.18} to multigraded ideals.
\end{remark}
\subsection{Persistence for products of projective spaces}
We now have all the tools required to prove the main theorem of this section.

\begin{proof}[Proof of Theorem \ref{maintheorem1}]
    Let $J \subseteq S$ be an ideal, homogeneous with respect to the $\mathbb{Z}^s$-grading, and with Hilbert polynomial $P_J(t_1,\dots,t_s)=P(t_1,\dots,t_s)$. By the assumption that $P(t_1,\dots,t_s)$ is a Hilbert polynomial, such $J$ exists. Suppose $I^{\text{multilex}}$ and $J^{\text{multilex}}$ are generated in degrees $\leq(a_1,\dots,a_s)$, with $a_i \geq 2$ for all $i$. Recall that $H_I(b_1,\dots,b_s)=P_I(b_1,\dots,b_s)$ and $H_J(b_1,\dots,b_s)=P_J(b_1,\dots,b_s)$ when all $b_i \geq a_i$. Suppose that $\maxdeg(P)=(p_1,\dots,p_s)$. By Lemma \ref{standecomp}, $P(t_1,\dots,t_s)$ can be written in the form 
    \[P(t_1,\dots,t_s)=\sum_{i_s=0}^{p_s} \dots \sum_{i_2=0}^{p_2} F_{i_2 \dots i_s}(t_1)\binom{t_2-a_2+i_2}{i_2}\dots\binom{t_s-a_s+i_s}{i_s},\] where the $F_{i_2 \dots i_s}(t_1)$ are Hilbert polynomials of standard-graded ideals generated in degrees $\leq a_1$. For a known $P(t_1,\dots,t_s)$ we can explicitly find each of the $F_{i_2 \dots i_s}$. Similarly, we can write
    \[P_I(t_1,\dots,t_s)=\sum_{i_s=0}^{q_s} \dots \sum_{i_2=0}^{q_2} G_{i_2 \dots i_s}(t_1)\binom{t_2-a_2+i_2}{i_2}\dots\binom{t_s-a_s+i_s}{i_s},\] for some $q_2,\dots,q_s \in \mathbb{N}$, where each $G_{i_2, \dots, i_s}(t_1)$ is the Hilbert polynomial of some standard-graded ideal generated in degrees $\leq a_1$. The polynomial $P_I(t_1,\dots,t_s)$ is therefore uniquely determined by $q_2,\dots,q_s$ and the $G_{i_2 \dots i_s}$. Our aim is to show that $q_i=p_i$ for all $2 \leq i \leq s$, and $G_{i_2 \dots i_s}(t_1)=F_{i_2 \dots i_s}(t_1)$ for all $i_2, \dots, i_s$ by using induction.
    
    The first step of the proof is to fix the point $(d_1,\dots,d_s) \in \mathbb{N}^s$. We denote by $P_{\boldsymbol{b}}^{r}(t_r)$ the polynomials as in definition \ref{partial} for $P$ and $Q_{\boldsymbol{b}}^{r}(t_r)$ those for $P_I$. Recall from Lemma \ref{justbistable} that for each $2 \leq r \leq s$ and $\boldsymbol{b} \in \mathbb{N}^{s-1}$ with $b_i \geq a_i$ for all $i<r$, there exists a bilex ideal $K \subseteq k[x_0,\dots,x_m,y_0,\dots,y_{n_r}]$ such that $P_{\boldsymbol{b}}^{r}(t_r)=P_K(1,t_r)$. We may further assume $K$ is generated in degrees $\leq(1,a_r)$. The same holds for the $Q_{\boldsymbol{b}}^{r}(t_r)$. In particular we can apply Theorem \ref{Gotz} to both sets of polynomials. Let 
    \[P_{\boldsymbol{b}}^{r}(u) =  \binom{u+n_r}{n_r}\mathfrak{q}(u)+\mathfrak{r}(u)\] be the Euclidean division of $P_{\boldsymbol{b}}^{r}(u)$ by $\binom{u+n_r}{n_r}$, and define $c(P_{\boldsymbol{b}}^{r}(t_r))$ to be the minimum value of $u \in \mathbb{N}$ such that
    \[P_{\boldsymbol{b}}^{r}(u+1) =  \binom{u+1+n_r}{n_r}\mathfrak{q}(u)+\mathfrak{r}(u)^{\left<u\right>}.\] This minimum must exist by Theorem \ref{Gotz} part $(i)$. We now choose $d_1 \in \mathbb{N}$ with $d_1 \geq a_1$ and such that $d_1$ is larger than the maximal Gotzmann number of the Hilbert polynomials $F_{i_2 \dots i_s}$. Once $d_1$ is fixed, set \[\mathcal{B}_2 = \{(b_1,b_3,\dots,b_{s}) \in \mathbb{N}^{s-1} \mid b_1 \in \{d_1,d_1+1\}, 0 \leq b_i \leq p_{i} \text{ for } 2 < i \leq s\},\] and set $c_2 = \max_{\boldsymbol{b} \in \mathcal{B}_2}\{c(P_{\boldsymbol{b}}^{2}(t_2))\}$. We then set $d_2 = \max(a_2,c_2)$. In this way we recursively define 
    \begin{align*}
        \mathcal{B}_r=\{(b_1,\dots,b_{r-1},b_{r+1},\dots,b_s) \in \mathbb{N}^{s-1} \mid b_i \in \{d_i,d_i+1\} \text{ for } i<r,  \\ 0 \leq b_i \leq p_{i} \text{ for } i > r\},
    \end{align*} $c_r = \max_{\boldsymbol{b} \in \mathcal{B}_{r-1}}\{c(P_{\boldsymbol{b}}^{r}(t_r))\}$, and $d_r=\max(a_r,c_r)$, up to $r=s$. With $(d_1,\dots,d_s)$ now fixed, we proceed with the proof.
    
    We have defined $\mathcal{B}_r$ for $2 \leq r \leq s$. We similarly define \[\mathcal{B}_1 = \{(b_2,\dots,b_s) \in \mathbb{N}^{s-1} \mid 0 \leq b_i \leq p_i \text{ for all } i\}.\]
    We assume as in the statement of the theorem that $H_I(b_1,\dots,b_s)=P(b_1,\dots,b_s)$ for all points in $\mathcal{B}_s$. We will prove the following statements by induction on decreasing $r$ for all $1 \leq r \leq s$.
    \begin{enumerate}[\normalfont(i)]
        \item $Q_{\boldsymbol{b}}^{r}(d_r) = P_{\boldsymbol{b}}^{r}(d_r)$ and $Q_{\boldsymbol{b}}^{r}(d_r+1) = P_{\boldsymbol{b}}^{r}(d_r+1)$
        for all $\boldsymbol{b} \in \mathcal{B}_r$.
        \item $q_i=p_i$ for all possible $i > r$.
    \end{enumerate}
    We begin with the base case of $r=s$. For $(i)$, this case is exactly the assumption of Theorem \ref{maintheorem1}, that $H_I(b_1,\dots,b_s)=P(b_1,\dots,b_s)$ for all points in $\{(b_1,\dots,b_s) \in \mathbb{N}^s \mid b_i \in \{d_i,d_i+1\}\}$. For $(ii)$, the statement is vacuously true.
    
    For the induction step, we will show that for $2 \leq r \leq s$, $(i)$ and $(ii)$ holding for $r$ implies $(i)$ and $(ii)$ hold for $r-1$.
    To see this is true, suppose $(i)$ and $(ii)$ hold for some $2 \leq r \leq s$. Let 
    \[P_{\boldsymbol{b_r}}^{r}(d_r) = \binom{d_r + n_r}{d_r}\mathfrak{q}(d_r) + \mathfrak{r}(d_r)\]
    be the Euclidean division of $P_{\boldsymbol{b_r}}^{r}(d_r)$ by $\binom{d_r+n_r}{d_r}$. We chose $d_r$ such that $d_r \geq c(P_{\boldsymbol{b}}^{\boldsymbol{r}}(t_r))$ for all possible $\boldsymbol{b_r} \in \mathcal{B}_{r}$. It follows that
    \begin{equation}\label{matchy2}
    Q_{\boldsymbol{b_r}}^{\boldsymbol{r}}(d_r+1) = P_{\boldsymbol{b_r}}^{r}(d_r+1) = \binom{d_r+1+n_r}{d_r+1}\mathfrak{q}(d_r)+\mathfrak{r}(d_r)^{\left<d_r\right>}
    \end{equation}
    for all $\boldsymbol{b_r} \in \mathcal{B}_r$ by $(i)$. As observed earlier, Lemma \ref{justbistable} implies that we can apply Crona's Theorem \ref{Gotz} to $Q_{\boldsymbol{b_r}}^{r}(t_r)$. Since equation \eqref{matchy2} holds, the assumption of Theorem \ref{Gotz} part $(ii)$ is satisfied. We therefore see that $Q_{\boldsymbol{b_r}}^{r}(u)=P_{\boldsymbol{b_r}}^{r}(u)$ for all $u \geq d_r$ and all $\boldsymbol{b_r} \in \mathcal{B}_r$, implying that $Q_{\boldsymbol{b}}^{r}(t_r)=P_{\boldsymbol{b}}^{r}(t_r)$ in $\mathbb{Q}[t_r]$ for all $\boldsymbol{b_r} \in \mathcal{B}_r$. Since $q_i=p_i$ for all $i > r$ by $(ii)$, we can apply Remark \ref{degmatch} to conclude that $q_r=p_r$. We then match the coefficients of $\binom{t_r-i_r+a_r}{i_r}$ in $Q_{\boldsymbol{b_r}}^{r}(t_r)$ and $P_{\boldsymbol{b_r}}^{\boldsymbol{r}}(t_r)$ for all $0 \leq i_r \leq p_r$ and all $\boldsymbol{b_r} \in \mathcal{B}_r$ to see that
    \begin{equation*}
    Q_{\boldsymbol{b_{r-1}}}^{r-1}(d_{r-1})=P_{\boldsymbol{b_{r-1}}}^{r-1}(d_{r-1}), \quad Q_{\boldsymbol{b_{r-1}}}^{r-1}(d_{r-1}+1)=P_{\boldsymbol{b_{r-1}}}^{r-1}(d_{r-1}+1),
    \end{equation*}
    for all $\boldsymbol{b_{r-1}} \in \mathcal{B}_{r-1}$. This completes the proof of the induction step.
    
    Since $(i)$ and $(ii)$ hold for $r=1$, it follows that $q_i = p_i$ for all $2 \leq i \leq s$, and that \[G_{i_2 \dots i_s}(d_1)=F_{i_2 \dots i_s}(d_1), \quad G_{i_2 \dots i_s}(d_1+1)=F_{i_2 \dots i_s}(d_1+1)\] for all $(i_2,\dots,i_s) \in \mathcal{B}_1$. Since we chose $d_1$ to be larger then the Gotzmann number of every Hilbert polynomial $F_{i_2 \dots i_s}(t_1)$, we have
    \[G_{i_2 \dots i_s}(d_1+1)=F_{i_2 \dots i_s}(d_1+1)=F_{i_2 \dots i_s}(d_1)^{\left<d_1\right>}=G_{i_2 \dots i_s}(d_1)^{\left<d_1\right>}\] for every $(i_2,\dots,i_s) \in \mathcal{B}_1$. We therefore apply Gotzmann's persistence theorem to each $G_{i_2 \dots i_s}(t_1)$ and conclude that $G_{i_2 \dots i_s}(t_1)=F_{i_2 \dots i_s}(t_1)$ in $\mathbb{Q}[t_1]$ for all $(i_2, \dots, i_s) \in \mathcal{B}_1$. It follows that $P_I=P$.
\end{proof}
We finish this section by verifying the Hilbert polynomial for Example \ref{exmp2}, along with two examples which contrast our results with earlier work in the area.
\begin{exmp}\label{exmp4}
    As in Example \ref{exmp2} let 
    \[S = k[x_{1,0},x_{1,1},x_{2,0},x_{2,1},x_{2,2},x_{3,0},x_{3,1}],\] 
    and consider the ideal $I=(x_{2,0},x_{1,0}x_{2,1},x_{1,0}x_{2,2},x_{1,1}x_{2,1}) \subseteq S$. Let $J$ and $P_J(t_1,t_2,t_3)$ be as in Example \ref{exmp2}. We will apply Theorem \ref{maintheorem1} to show that $P_I(t_1,t_2,t_3) = P_J(t_1,t_2,t_3)$. Recall that $F_{01}(t_1)=1, F_{00}(t_1)=2$. The maximal Gotzmann number of these polynomials is two, so we set $d_1=2$. Calculating the rest of the $P^{2}_{\boldsymbol{b}}(t_2)$ as in Example \ref{exmp3}, we obtain \[P^{2}_{(2,0)}(t_2) = P^{2}_{(3,0)}(t_2)= 2, \quad P_{(2,1)}^{2}(t_2)=P_{(3,1)}^{2}(t_2)=1,\] and we have $c(P^{2}_{(b_1,b_3)}(t_2)) \leq 2$ for these polynomials, so we set $d_2=2$. Finally,
    \begin{align*}
    P_{(2,2)}^{3}(t_3)=P_{(2,3)}^{3}(t_3)=P_{(3,2)}^{3}(t_3)=P_{(3,3)}^{3}(t_3) = t_3+1
    \end{align*}
    with $c(P^{3}_{(b_1,b_2)}(t_2)) \leq 2$ for all $b_1,b_2$, so we set $d_3 = 2$. Thus, verifying that $H_I(b_1,b_2,b_3)=P_J(b_1,b_2,b_3)$ for all $(b_1,b_2,b_3)$ with $2 \leq b_i \leq 3$ guarantees that $P_I(t_1,t_2,t_3)=t_3+1$.
\end{exmp}

\begin{exmp}\label{counter2}
The following example shows that Crona's result alone is not enough to compute the Hilbert polynomial of an ideal. Let $I \subseteq \cox(\mathbb{P}^n \times \mathbb{P}^m)$. Suppose there was a point $(d_1,d_2) \in \mathbb{N}^2$ with $I$ generated in degrees $\leq (d_1,d_2)$, and such that
\begin{enumerate}[\normalfont(i)]
    \item $H_{I}(d_1+1,d_2) = H_I(d_1,d_2)^{\left<d_1\right>_1}$,
    \item $H_{I}(d_1,d_2+1) = H_I(d_1,d_2)^{\left<d_2\right>_2}$,
    \item $H_{I}(d_1+1,d_2+1) = H_I(d_1,d_2+1)^{\left<d_1\right>_1}$,
    \item $H_{I}(d_1+1,d_2+1) = H_I(d_1+1,d_2)^{\left<d_2\right>_2}$.
\end{enumerate}
Then using Theorem \ref{Gotz} we would understand the growth of $H_I(b_1,b_2)$ for all $(b_1,b_2) \in (d_1,d_2) + \mathbb{N}^2$, allowing us to verify $P_I(t_1,t_2)$. However the following example shows that such a point may fail to exist. In fact, a point satisfying (i) and (ii) alone may fail to exist. Let \[I = (y_0^2,x_0y_0,x_0y_1^3) \subseteq k[x_0,x_1,y_0,y_1,y_2].\] The Hilbert polynomial of $I$, as calculated in Macaulay2 using the Correspondence Scrolls package~\cite{CorrespondenceScrollsSource}, is $P_I(t_1,t_2)=3t_1+2t_2+1$. Note that we have $H_I(d_1,d_2) = P_I(d_1,d_2)$ for all $(d_1,d_2) \in (1,3) + \mathbb{N}^2$. For fixed $t_2=d_2$, the Hilbert function $H_I(d_1,d_2)$ satisfies (i) when $\mathfrak{q}(d_1) = 0$ and $H_I(d_1+1,d_2) = H_I(d_1,d_2)^{\left<d_1\right>}$. The latter condition occurs when $d_1 \geq 2d_2+4$, since $2d_2+4$ is the Gotzmann number of the polynomial $P(t_1)=3t_1+2d_2+1$. Similarly, for fixed $t_1=d_1$ the Hilbert function satisfies (ii) exactly when $\mathfrak{q}(d_2)=0$ and $H_I(d_1,d_2+1)=H_I(d_1,d_2)^{\left<d_2\right>}$. The latter condition occurs when $d_2 \geq 3d_1+2$, as this is the Gotzmann number of the polynomial $Q(t_2)=3d_1+2t_2+1$. This means for a point $(d_1,d_2)$ to satisfy (i) and (ii) it must satisfy the inequalities
    \[d_1 \geq 2d_2+4 \geq 2(3d_1+2)+4.\]
    Rearranging, we obtain that $d_1 \leq -\tfrac{8}{5}$ which is clearly not possible for $d_1 \in \mathbb{N}$. Thus, no such point exists.
\end{exmp}

The key difference between Theorem \ref{maintheorem1} and Gotzmann's original persistence result lies in the choice of the point $(d_1,\dots,d_s)$. In  Gotzmann's original result we can choose $d$ to be the Gotzmann number of the prospective Hilbert polynomial $P(t)$. Work of Iarrobino and Kleiman~\cite{MR1735271}*{Appendix C} shows that the Gotzmann number of $P$ is equal to \[\text{inf}\{m \mid m \in \reg(I) \text{ for all } I \subseteq S \text{ with } P_I(t)=P(t)\}.\] Work of Bayer shows that this infimum is achieved as $\min(\reg(L))$ where $L$ is the unique saturated lexicographic ideal with Hilbert polynomial $P$. Thus, for saturated lex ideals $I$ and $J$ and $d \in \reg(I) \cap \reg(J)$, we have $H_I(d+1)=H_I(d)^{\left<d\right>}$ and $H_J(d+1)=H_J(d)^{\left<d\right>}$. It follows that if $H_I(d+1)=H_J(d+1)$ and $H_I(d)=H_J(d)$ then $P_I=P_J$, and in fact $I=J$. In other words, we cannot have different saturated lexicographic ideals whose Hilbert functions agree in degree $d$ and $d+1$. We might hope that a similar result is true for products of projective spaces. However, the following example shows that this does not generalise even to the $\mathbb{P}^n \times \mathbb{P}^m$ case.
\begin{exmp}\label{counter1}
     Let $S=k[x_0,x_1,x_2,y_0,y_1,y_2,y_3]$ be the Cox ring of $X = \mathbb{P}^2 \times \mathbb{P}^3$, where $\deg(x_i)=(1,0)$ and $\deg(y_j)=(0,1)$. Let 
         \begin{gather*}
             I=(x_0,y_0,y_1^2,x_1y_1,x_1y_2^3), \\
             J=(y_0,x_0y_1,x_0y_2,x_1y_1,x_1y_2,y_1^3,y_1^2y_2).
         \end{gather*}
    Note that $I$ and $J$ are generated in degrees $\leq (1,3)$, and are both bilex and $B$-saturated, where $B$ is the irrelevant ideal of $X$. Computing the multigraded regularity of $I$ and $J$ with the Macaulay2~\cite{M2} virtual resolutions package~\cite{almousa2020virtual} we observe $(1,3) \in \reg(I) \cap \reg(J)$, or equivalently $(0,2) \in \reg(S/I) \cap \reg(S/J)$. Consider the set $\mathcal{D}=\{(1,3),(2,3),(1,4),(2,4)\}$. Then $H_{I}(t_1,t_2)=H_{J}(t_1,t_2)$ for all $(t_1,t_2) \in \mathcal{D}$. However, \[H_{I}(3,3) = 16 \neq H_{J}(3,3)=17.\] Since the Hilbert functions of both $I$ and $J$ agree with their Hilbert polynomials in degree $(3,3)$ by corollary 2.15 of~\cite{maclagan2003uniform}, $I$ and $J$ necessarily have different Hilbert polynomials. Indeed, we verify in Macaulay2 using the Correspondence Scrolls package~\cite{CorrespondenceScrollsSource} that $P_I(t_1,t_2)=3t_1+2t_2+1$ and $P_J(t_1,t_2)=\tfrac{1}{2}t_1^2+\tfrac{3}{2}t_1+2t_2+2$. 
\end{exmp}
We therefore have to use a more complicated method to find appropriate $\boldsymbol{d}=(d_1,\dots,d_s)$ such that checking the Hilbert function of our ideal in a finite number of points around $\boldsymbol{d}$ verifies its Hilbert polynomial.

\begin{remark}
An increased understanding of the degrees of the generators of $I^{\text{multilex}}$ would allow us to combine Theorem \ref{maintheorem1} with Maclagan and Smith's result on multigraded regularity to obtain a supportive set as in~\cite{haiman2002multigraded}*{Proposition 3.2}, of size $2^s$ for $P(t_1,\dots,t_s)$.
\end{remark}

\section{Extension to more general toric varieties}\label{three}
\subsection{The Picard rank 2 case}
In this section we see how the results of Section \ref{twohalf} apply to Picard-rank-2 toric varieties. Let $d,s \in \mathbb{Z}$ with $d \geq 2$, $1 \leq s \leq d-1$. Let $\boldsymbol{a}=(a_1,\dots,a_s) \in \mathbb{Z}^s$ with $0 \leq a_1 \leq a_2 \leq \dots \leq a_s$. Work of Kleinschmidt~\cite{kleinschmidt1988classification} defines the smooth projective toric variety $X_d(\boldsymbol{a})$ associated to $d$ and $\boldsymbol{a}$. Its Cox ring is  $R=k[z_1,\dots,z_{d+2}]$, with the grading given by the $2 \times (d+2)$ matrix $A$:
 \[
A=
\begin{blockarray}{cccccccc}
\begin{block}{(cccccccc)}
    -a_1 & \dots & -a_s & 0 & 1 & \dots & 1 & 1\\
    1 & \dots & 1 & 1 & 0 & \dots & 0 & 0 \\
\end{block}
\end{blockarray}\quad.
\]
With this choice of grading the semigroup $\mathcal{K}$ of nef line bundles on $X_d(\boldsymbol{a})$ is identified with $\mathbb{N}^2$. Kleinschmidt further showed that every Picard-rank-2 smooth projective toric variety is isomorphic to $X_d(\boldsymbol{a})$ for some choice of $d$ and $\boldsymbol{a}$. We therefore restrict our focus to varieties with Cox rings in the above form. We can relate the ring $R$ to the Cox ring of the product of two projective spaces as follows.

\begin{lemma}\label{surjhom}
Let $R=k[z_1,\dots,z_{d+2}]$ be the Cox ring of a Picard-rank-2 toric variety, and let $J \subseteq R$ be an ideal homogeneous with respect to the $\mathbb{Z}^2$-grading. Let $n=\dimn_k(R_{(1,0)})-1$ and $m=\dimn_k(R_{(0,1)})-1$, and set $S=\cox(\mathbb{P}^n \times \mathbb{P}^m)$. Then there is a surjective map of $k$-algebras $\psi \colon S \rightarrow \widetilde{R}$, where $\widetilde{R} = \bigoplus_{\boldsymbol{b} \in \mathbb{N}^2}R_{\boldsymbol{b}}$.
\end{lemma}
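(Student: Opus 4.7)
The plan is to define $\psi$ on generators of $S$ using explicit bases of $R_{(1,0)}$ and $R_{(0,1)}$, and then to establish surjectivity degree by degree. First I would identify these bases directly from the grading matrix $A$: reading off the constraints, $R_{(1,0)}$ is spanned by $\{z_{s+2},\ldots,z_{d+2}\}$ (which forces $n = d-s$), while $R_{(0,1)}$ is spanned by $z_{s+1}$ together with the products $z_i \boldsymbol{m}$ for $1 \leq i \leq s$, where $\boldsymbol{m}$ runs over monomials of degree $a_i$ in $z_{s+2},\ldots,z_{d+2}$. Choosing any bijection between $\{x_0,\ldots,x_n\}$ and this basis of $R_{(1,0)}$, and between $\{y_0,\ldots,y_m\}$ and this basis of $R_{(0,1)}$, uniquely extends to a $k$-algebra homomorphism $\psi \colon S \to R$ because $S$ is a polynomial ring. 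Since every monomial $\boldsymbol{x^u y^v} \in S$ is sent to something of bidegree $(|\boldsymbol{u}|,|\boldsymbol{v}|) \in \mathbb{N}^2$, the image lies in $\widetilde{R}$, giving the desired $\psi \colon S \to \widetilde{R}$.

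The substance of the proof is surjectivity. Both $S$ and $\widetilde{R}$ are $\mathbb{N}^2$-graded and $\psi$ is a graded map, so it suffices to check $R_{(p,q)} \subseteq \psi(S_{(p,q)})$ for every $(p,q) \in \mathbb{N}^2$. By construction $\psi(S_{(p,q)}) = R_{(1,0)}^{p} \cdot R_{(0,1)}^{q}$, so I would prove the stronger statement that every monomial $\boldsymbol{z^u} \in R_{(p,q)}$ can be written as a product of $p$ elements of $R_{(1,0)}$ and $q$ elements of $R_{(0,1)}$. The decomposition is dictated by the grading: the degree conditions read $u_1 + \ldots + u_{s+1} = q$ and $u_{s+2} + \ldots + u_{d+2} = p + \sum_{i=1}^{s} a_i u_i$. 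For each $1 \leq i \leq s$ I would pair the $u_i$ copies of $z_i$ with $a_i u_i$ factors drawn from $z_{s+2},\ldots,z_{d+2}$ (grouped into $u_i$ blocks of size $a_i$) to form $u_i$ elements of $R_{(0,1)}$, combine these with the $u_{s+1}$ copies of $z_{s+1}$ (each itself in $R_{(0,1)}$), and let the $p$ leftover factors drawn from $z_{s+2},\ldots,z_{d+2}$ supply the required elements of $R_{(1,0)}$. Counting shows that the $q$ degree-$(0,1)$ pieces and $p$ degree-$(1,0)$ pieces account for all of $\boldsymbol{z^u}$.

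I do not foresee a genuine obstacle: the statement is ultimately the observation that the nef-graded piece $\widetilde{R}$ of $\cox(X_d(\boldsymbol{a}))$ is generated in degrees $(1,0)$ and $(0,1)$, which is visible directly from the shape of $A$ and reflects that these degrees lie on the rays of the nef cone. The only step requiring care is the combinatorial bookkeeping in the pairing argument, but the degree constraints make the number of available factors exactly right, so the choice of pairing is essentially forced.
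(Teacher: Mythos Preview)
Your proposal is correct and follows essentially the same approach as the paper: both define $\psi$ by sending the generators of $S$ to the monomial bases of $R_{(1,0)}$ and $R_{(0,1)}$, and both prove surjectivity via the identical combinatorial pairing argument showing that any monomial $\boldsymbol{z^u} \in R_{(p,q)}$ factors as a product of $p$ elements of $R_{(1,0)}$ and $q$ elements of $R_{(0,1)}$, using the degree constraints to count factors. Your write-up is in fact slightly more explicit than the paper's in describing the basis of $R_{(0,1)}$.
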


\begin{proof}
Let $S=k[x_0,\dots,x_n,y_0,\dots,y_m]$, with the grading given by $\deg(x_i)=(1,0)$ and $\deg(y_j)=(0,1)$. Note that $n=d-s$, since $R_{(1,0)}$ has a basis given by $\{z_{s+2},\dots,z_{d+2}\}$. We have chosen $S$ such that $R_{(1,0)}$ and $R_{(0,1)}$ are isomorphic to the vector spaces $S_{(1,0)}$ and $S_{(0,1)}$. We define a map of $k$-algebras $\psi \colon S \rightarrow \widetilde{R}$. Explicitly, set $\psi(x_i)=z_{s+2+i}$ and set $\psi(y_i)=e_i$, where $e_0,\dots,e_m$ is the monomial basis of $R_{(0,1)}$. We now verify that the map $\psi$ is surjective.

Consider a monomial $\boldsymbol{z^u}=z_1^{r_1} \dots z_{d+2}^{r_{d+2}} \in R_{(b_1,b_2)}$ for some $(b_1,b_2) \in \mathbb{N}^2$. We have that 
\begin{align}\label{rst}
    r_{s+2}+\dots+r_{d+2} &= r_1a_1+\dots+r_sa_s+b_1, \\
    r_1+\dots+r_{s+1} &= b_2.
\end{align}
We can rearrange the monomial $\boldsymbol{z^u}$ to exhibit it as the product of $b_1$ elements of $R_{(1,0)}$ and $b_2$ elements of $R_{(0,1)}$ as follows. Consider the multiset \[\mathcal{Z} = \bigcup_{i=s+2}^{d+2} \bigcup_{j=1}^{r_i} \{z_i\}.\] In other words, \[\mathcal{Z}=\{z_{s+2},\dots,z_{s+2},\dots,z_{d+2},\dots,z_{d+2}\},\] where each $z_i$ is repeated $r_i$ times. Equation \eqref{rst} implies that $\mathcal{Z}$ has size $r_1a_1+\dots+r_sa_s+b_1$. We relabel $r_1a_1+\dots+r_sa_s$ of the elements of $\mathcal{Z}$ to variables of the form $z_{i,j,k}$ for $1 \leq i \leq s$, $1 \leq j \leq r_i$, $1 \leq k \leq a_i$. For a fixed $i$ between $1$ and $s$ we have $r_ia_i$ elements lying in the multiset $\mathcal{Z}$ labelled $z_{i,1,1},\dots,z_{i,r_i,a_i}$. Let $f$ denote the product of the remaining $b_1$ elements of $\mathcal{Z}$, which as previously observed all lie in $R_{(1,0)}$. Then
\[\boldsymbol{z^u} = f z_{s+1}^{r_{s+1}}\prod_{i=1}^{s}\prod_{j=1}^{r_i} (z_i \prod_{k=1}^{a_i}z_{i,j,k}). \] Note that each \[z_i \prod_{k=1}^{a_i}z_{i,j,k}\] is an element of $R_{(0,1)}$. This rearrangement of the monomial $\boldsymbol{z^u}$ shows that it is the product of elements in $R_{(1,0)}$ and $R_{(0,1)}$, ensuring that $\psi$ is surjective.
\end{proof}
\begin{remark}\label{surjhomrem}
     Suppose $P(t_1,t_2)$ is the Hilbert polynomial for some homogeneous ideal $J \subseteq R$. Define $\widetilde{J} = \bigoplus_{\boldsymbol{b} \in \mathbb{N}^2}J_{\boldsymbol{b}}$. Composing the surjective map from Lemma \ref{surjhom} with the map $\widetilde{R} \rightarrow \widetilde{R}/\widetilde{J}$ we obtain a surjective map $\varphi \colon S \rightarrow \widetilde{R}/\widetilde{J}$. For $I=\ker(\varphi) \subseteq S$ we have $H_I(b_1,b_2)=H_J(b_1,b_2)$ for $(b_1,b_2) \in \mathbb{N}^2$, and it follows that $P_I(t_1,t_2)=P(t_1,t_2)$, ensuring that $P(t_1,t_2)$ is a Hilbert polynomial on $S$. Thus, it follows from Lemma \ref{surjhom} that if $P(t_1,t_2)$ is a Hilbert polynomial on $R$, then it is also a Hilbert polynomial on $S$.
\end{remark}
\begin{proof}[Proof of Theorem \ref{picrank2}]
As in Remark \ref{surjhomrem}, using the surjective homomorphism of Lemma \ref{surjhom} we may find a homogeneous ideal $I \subseteq S$ with $H_I(b_1,b_2)=H_J(b_1,b_2)$ for all $(b_1,b_2) \in \mathbb{N}^2$. Consequently, we apply Theorem \ref{maintheorem1} to $I \subseteq S$ to find appropriate $(d_1,d_2) \in \mathbb{N}^2$ such that if
\begin{gather*}
        H_J(d_1,d_2)=P(d_1,d_2), \quad H_J(d_1+1,d_2)=P(d_1+1,d_2), \\ H_J(d_1,d_2+1)=P(d_1,d_2+1), \quad H_J(d_1+1,d_2+1)=P(d_1+1,d_2+1), 
\end{gather*}
then $P_I(t_1,t_2)=P(t_1,t_2)$. Since $P_I(t_1,t_2)=P_J(t_1,t_2)$ this then guarantees that $P_J(t_1,t_2)=P(t_1,t_2)$.
\end{proof}
\begin{exmp}\label{Hirz}
Consider the Hirzebruch surface $\mathcal{H}_1$ with Cox ring $R=k[z_0,z_1,z_2,z_3]$, with $\deg(z_0)=\deg(z_2)=(1,0)$, $\deg(z_1)=(-1,1)$, $\deg(z_3)=(0,1)$. Consider the ideal $J=(z_0)$ in $R$. We will find the Hilbert polynomial $P_J(t_1,t_2)$ using Theorem \ref{picrank2}. Observe that $R_{(1,0)} $ has basis $\{z_0,z_2\}$ and $R_{(0,1)}$ has basis $\{z_1z_0,z_1z_2,z_3\}$. We therefore have a surjective homomorphism from $S=k[x_0,x_1,y_0,y_1,y_2]$, the Cox ring of $\mathbb{P}^1 \times \mathbb{P}^2$, to $\widetilde{R}$ with kernel $(x_0y_1-x_1y_0)$. Extending the above map we have a surjection from $S$ to $\widetilde{R}/\widetilde{J}$ with kernel $I=(x_0,x_0y_1-x_1y_0)=(x_0,x_1y_0)$, which is multilex. The Hilbert function of $S/I$ in degree $\boldsymbol{t} \in \mathbb{N}^2$ is equal to that of $R/J$. Let $P(t_1,t_2) = t_2+1$ and suppose we want to verify that $P_J(t_1,t_2)=P(t_1,t_2)$. As in Lemma \ref{standecomp}, we write
\[P(t_1,t_2)=F_{0}(t_1)\binom{t_2-a_2+0}{0}+F_{1}(t_1)\binom{t_2-a_2+1}{1}.\] Since $P(t)$ is the Hilbert polynomial of the multilex ideal $(x_0,y_0)$ we can let $(a_1,a_2)=(2,2)$ to obtain 
\[P(t_1,t_2)=F_{0}(t_1)\binom{t_2-2+0}{0}+F_{1}(t_1)\binom{t_2-2+1}{1}.\] It follows that $F_{1}(t_1)=1$, $F_{0}(t_1)=2$, with maximal Gotzmann number 2. Fixing $d_1=2$, we observe that $c(P_{2}^{2}(t_2))=c(P_{3}^{2}(t_2))=c(t_2+1)=1$, so we can also fix $d_2=2$.
Therefore by Theorem \ref{maintheorem1} it is enough to check $H_J(t_1,t_2)=P(t_1,t_2)$ for \[(t_1,t_2) \in \{(2,2),(2,3),(3,2),(3,3)\}\] to verify that $P_J(t_1,t_2)=P(t_1,t_2)$.
\end{exmp}
\subsection{Higher Picard rank toric varieties}\label{highranksec}
The surjective morphism of rings in Lemma \ref{surjhom} is actually a stronger condition than is needed to extend results about products of projective spaces to more general smooth projective toric varieties. By making use of multigraded Castelnuovo-Mumford regularity, we are able to use similar techniques to extend our persistence theorem to any smooth projective toric variety. For the rest of this section, let $R$ be the Cox ring of a smooth projective toric variety $X$ with Picard rank $s$, with a grading given by fixing an isomorphism of $\pic(X)$ with $\mathbb{Z}^s$. Let $\mathcal{K}$ be the semigroup of nef line bundles on $X$.
\begin{definition}\label{whatispoly}
    For an ideal $J \subseteq R$, homogeneous with respect to the $\mathbb{Z}^s$-grading, the Hilbert function of $J$ is given by
    \begin{align*}
        H_J: \mathbb{Z}^s & \rightarrow \mathbb{N} \\
        \boldsymbol{r}=(r_1,\dots,r_s) & \mapsto \dimn_k((R/J)_{\boldsymbol{r}}).
    \end{align*}
    This function agrees with a polynomial $P_J(t_1,\dots,t_s)$ for all $(r_1,\dots,r_s)$ sufficiently far from the boundary of $\nef(X)$. This polynomial is the Hilbert polynomial of the ideal $J$. The notion of being sufficiently far from the boundary of $\nef(X)$ can be made rigorous using Castelnuovo-Mumford regularity, see \cite{maclagan2003uniform}*{Corollary 2.15}.
\end{definition}
\begin{remark}
    It is important to note that the explicit polynomial $P_J(t_1,\dots,t_s)$ we obtain for an ideal $J \subseteq R$ is dependent on our choice of isomorphism $\pic(X) \cong \mathbb{Z}^s$. For example, in the $\mathbb{P}^n$ case it is standard to choose the isomorphism $\mathcal{O}(d) \mapsto d \in \mathbb{Z}$, which gives the Cox ring the standard grading. However we could just as easily choose the isomorphism $\mathcal{O}(d) \mapsto -d \in \mathbb{Z}$, in which case each variable in the Cox ring of $\mathbb{P}^n$ would have degree $-1$, and the Hilbert polynomial of the Cox ring would be $P(b)=\binom{n-b}{-b}$ instead of the usual $P(b)=\binom{n+b}{b}$. In the case of $\mathbb{P}^n$ there is a standard choice of isomorphism, but for more general smooth projective toric varieties this can fail to be the case.
\end{remark}
Maclagan and Smith \cite{maclagan2004multigraded} give a generalisation of Castelnuovo-Mumford regularity for an $R$-module M or coherent sheaf $\mathcal{F}$ on $X$, denoted $\reg(M)$ and $\reg(\mathcal{F})$. Note that $\reg(M)$ and $\reg(\mathcal{F})$ are sets contained inside $\mathbb{Z}^s \cong \pic(X)$. This is different to the standard-graded case, where we define $\reg(M) = \min \{m \in \mathbb{Z} \mid M \text{ is } m \text{-regular}\}$. In the multigraded case there may not be a clear minimum element so it makes sense to instead consider the set of all $\boldsymbol{m} \in \mathbb{Z}^n$ such that $M$ is $\boldsymbol{m}$-regular.

For the rest of this section fix an element $\boldsymbol{c}_1 \neq \boldsymbol{0} \in \reg(X)$, and choose line bundles $\boldsymbol{c}_2,\dots, \boldsymbol{c}_s \in \mathcal{K}$ such that the collection $\mathcal{C} = \{\boldsymbol{c}_1,\dots,\boldsymbol{c}_s\}$ is a basis for $\mathbb{Q}^s$. Such a collection must exist since $X$ is projective, meaning $\nef(X)$ is an $s$-dimensional cone. Note that the following definition and proof work for any such choice of $\mathcal{C}$.

\begin{definition}\label{coxandf}
Let $S$ be the Cox ring of \[\mathbb{P}^{n_1} \times \dots \times \mathbb{P}^{n_s},\] where $n_i=\dimn_k(R_{\boldsymbol{c}_i})-1$. Explicitly, we write
\[S = k[x_{1,0},\dots,x_{1,n_1},\dots,x_{s,0},\dots,x_{s,n_s}],\] with $\deg(x_{i,j}) = \boldsymbol{e}_i \in \mathbb{Z}^{s}$.
Consider the following automorphism \begin{align*}
f\colon \mathbb{Q}^{s} &\rightarrow \mathbb{Q}^s \\
(b_1,\dots,b_s) &\mapsto b_1\boldsymbol{c}_1+\dots+b_s\boldsymbol{c}_s.
\end{align*}
If we explicitly let $\boldsymbol{c}_i=(c_{i1},\dots,c_{is})$, then we have \[f(b_1,\dots,b_s) = \begin{pmatrix}
    b_1 & \dots & b_s
\end{pmatrix} \begin{pmatrix}
    c_{11} & \dots & c_{1s} \\ \vdots & \vdots & \vdots \\ c_{s1} & \dots & c_{ss}
\end{pmatrix}.\]
We may view $f$ as a map on affine spaces $\mathbb{A}^{s}_{\mathbb{Q}} \rightarrow \mathbb{A}^{s}_{\mathbb{Q}}$. From this perspective, let $f^{\#} \colon \mathbb{Q}[t_1,\dots,t_s] \rightarrow \mathbb{Q}[u_1,\dots,u_s]$ be the induced map on the corresponding coordinate rings. Again, we give this map explicitly as 
\[f^{\#} P(t_1,\dots,t_s) = P (\begin{pmatrix} u_1 & \dots & u_s \end{pmatrix} \begin{pmatrix}
    c_{11} & \dots & c_{1s} \\ \vdots & \vdots & \vdots \\ c_{s1} & \dots & c_{ss}
\end{pmatrix}).\]
In particular since the elements $\boldsymbol{c}_1,\dots,\boldsymbol{c}_s$ are linearly independent over $\mathbb{Q}$, $f$ is surjective, and so $f^{\#}$ is injective.
\end{definition}

\begin{definition}\label{admiss}
We say that a polynomial $P(t_1,\dots,t_s) \in \mathbb{Q}[t_1,\dots,t_s]$ is admissible on $R$ if there is a Hilbert polynomial $Q(u_1,\dots,u_s)$ on $S$ such that $f^{\#} P(u_1,\dots,u_s)=Q(u_1,\dots,u_s)$.
\end{definition}
The following lemma allows us to establish that any Hilbert polynomial on $R$ is admissible. We will make use of the following theorem of Maclagan and Smith.
\begin{theorem}[\cite{maclagan2004multigraded}*{Theorem 1.4}]\label{2surjhom}
    For a simplicial projective toric variety $X$ and coherent sheaf $\mathcal{F}$ on $X$, and for $\boldsymbol{p}$ in $\reg(\mathcal{F})$ and $\boldsymbol{q} \in \mathcal{K}$, there is a surjective morphism 
\begin{equation*}
    H^0(X,\mathcal{F}(\boldsymbol{p})) \otimes H^0(X,\mathcal{O}_X(\boldsymbol{q})) \twoheadrightarrow H^0(X,\mathcal{F}(\boldsymbol{p}+\boldsymbol{q})).
\end{equation*}
\end{theorem}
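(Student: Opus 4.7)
The plan is to mimic the classical proof of the analogous statement for $\mathbb{P}^n$, adapting it to the multigraded toric setting. The idea is to turn a surjection of line bundles into an auxiliary short exact sequence whose cohomology is controlled by the regularity hypothesis.

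First, I would exploit that on a simplicial projective toric variety every nef divisor class is basepoint-free, so $\mathcal{O}_X(\boldsymbol{q})$ is globally generated. Setting $V = H^0(X, \mathcal{O}_X(\boldsymbol{q}))$, the evaluation map $V \otimes \mathcal{O}_X \twoheadrightarrow \mathcal{O}_X(\boldsymbol{q})$ is surjective, giving a short exact sequence
\[
0 \to K \to V \otimes \mathcal{O}_X \to \mathcal{O}_X(\boldsymbol{q}) \to 0.
\]
Since $\mathcal{O}_X(\boldsymbol{q})$ is a line bundle this sequence is locally split, so tensoring with $\mathcal{F}(\boldsymbol{p})$ preserves exactness. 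The associated long exact cohomology sequence reduces the desired surjectivity of $V \otimes H^0(\mathcal{F}(\boldsymbol{p})) \to H^0(\mathcal{F}(\boldsymbol{p}+\boldsymbol{q}))$ to the vanishing $H^1(X, K \otimes \mathcal{F}(\boldsymbol{p})) = 0$.

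Next, I would resolve $K$ using the Koszul-type complex associated to the surjection $V \otimes \mathcal{O}_X \twoheadrightarrow \mathcal{O}_X(\boldsymbol{q})$; its $k$-th term has the form $\wedge^k V \otimes \mathcal{O}_X(-(k-1)\boldsymbol{q})$ for $k \geq 1$. Tensoring with $\mathcal{F}(\boldsymbol{p})$ and running the resulting hypercohomology spectral sequence, one bounds $H^1(K \otimes \mathcal{F}(\boldsymbol{p}))$ by the terms $H^{i+1}(X, \mathcal{F}(\boldsymbol{p} - i\boldsymbol{q}))$ for $i \geq 1$. The Maclagan--Smith definition of $\boldsymbol{p}$-regularity is engineered precisely so that such cohomology vanishes: writing $\boldsymbol{q} = \mu_1 \boldsymbol{c}_1 + \dots + \mu_l \boldsymbol{c}_l$ with $\mu_j \in \mathbb{N}$ expresses $\boldsymbol{p} - i\boldsymbol{q}$ in terms of the Hilbert basis $\mathcal{C}$, and the regularity vanishing condition applies to each such combination.

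The main obstacle I would anticipate is confirming exactness of the Koszul-type resolution of $K$ and matching the spectral sequence indexing with the precise form of the Maclagan--Smith definition, which involves translates of the nef cone indexed by tuples $(\lambda_j)$ with $\sum \lambda_j = i - 1$ rather than uniform multiples of a single generator. To sidestep this, I would reduce by induction to the case where $\boldsymbol{q}$ is a single Hilbert basis element $\boldsymbol{c}_j$, so that $i\boldsymbol{q} = i\boldsymbol{c}_j$ fits directly into the definition. The inductive step, combining surjectivity for $\boldsymbol{q}_1$ and $\boldsymbol{q}_2$ into surjectivity for $\boldsymbol{q}_1 + \boldsymbol{q}_2$, is then a formal diagram chase once one knows that the multiplication map $H^0(\mathcal{O}_X(\boldsymbol{q}_1)) \otimes H^0(\mathcal{O}_X(\boldsymbol{q}_2)) \to H^0(\mathcal{O}_X(\boldsymbol{q}_1 + \boldsymbol{q}_2))$ is surjective; this is itself an instance of the theorem applied to $\mathcal{F} = \mathcal{O}_X(\boldsymbol{q}_2)$, using that elements of $\mathcal{K}$ lie in $\reg(\mathcal{O}_X)$.
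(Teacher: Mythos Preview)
The paper does not give its own proof of this statement: Theorem \ref{2surjhom} is quoted verbatim from \cite{maclagan2004multigraded}*{Theorem 1.4} as a black box, and is then applied in the proof of Lemma \ref{picrankhigh}. So there is nothing in the present paper to compare your proposal against.

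That said, your sketch is the standard Mumford-style argument and is essentially the one carried out in the Maclagan--Smith paper you are reproving. The reduction by induction to a single Hilbert-basis element $\boldsymbol{c}_j$ is the right move and is exactly how the indexing in the regularity definition is made to match the Koszul terms. One point to be careful with in the inductive step: you need not only that $\boldsymbol{p} \in \reg(\mathcal{F})$ but that $\boldsymbol{p} + \boldsymbol{q}_1 \in \reg(\mathcal{F})$ as well, so that the second application of the base case is legitimate; this is part of the content of the Maclagan--Smith theory (regularity propagates into $\boldsymbol{p} + \mathcal{K}$) and should be invoked explicitly. With that in place your outline is sound.
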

For the rest of this section set the notation $\mathcal{B} = \{ (b_1,\dots,b_s) \in \mathbb{N}^{s} \mid  b_1 \geq 1\}$.
\begin{lemma}\label{picrankhigh}
Let $J \subseteq R$ be an ideal, homogeneous with respect to the $\mathbb{Z}^s$-grading. Let $S$, $f$ be as in Definition \ref{coxandf}.
Then there exists an ideal $I \subseteq S$ such that there is an isomorphism $\psi_{\boldsymbol{b}} : (S/I)_{\boldsymbol{b}} \rightarrow (R/J)_{f(\boldsymbol{b})}$ for any $\boldsymbol{b}=(b_1,\dots,b_s) \in \mathcal{B}$. 
\end{lemma}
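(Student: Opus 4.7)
Plan. I would construct a $\mathbb{Z}^s$-graded ring homomorphism $\varphi \colon S \to R$ that carries $S_{\boldsymbol{b}}$ into $R_{f(\boldsymbol{b})}$, show that it is surjective onto $R_{f(\boldsymbol{b})}$ for every $\boldsymbol{b} \in \mathcal{B}$, and then take $I$ to be the kernel of the composite $S \xrightarrow{\varphi} \widetilde{R} \xrightarrow{\pi} \widetilde{R}/\widetilde{J}$, where $\widetilde{R} := \bigoplus_{\boldsymbol{b} \in \mathbb{N}^s} R_{f(\boldsymbol{b})}$ and $\widetilde{J} := \bigoplus_{\boldsymbol{b} \in \mathbb{N}^s} J_{f(\boldsymbol{b})}$. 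Concretely, fix a $k$-basis $\{e_{i,0}, \dots, e_{i,n_i}\}$ of $R_{\boldsymbol{c}_i}$ for each $i$ and define $\varphi(x_{i,j}) = e_{i,j}$. Since the $\boldsymbol{c}_i$ are linearly independent, $\widetilde{R}$ is a $\mathbb{Z}^s$-graded $k$-subalgebra of $R$, and because $J$ is a homogeneous ideal, $\widetilde{J}$ is a $\mathbb{Z}^s$-graded ideal of $\widetilde{R}$. Hence $I := \ker(\pi \circ \varphi)$ is a homogeneous ideal of $S$, and the first isomorphism theorem provides an injective $k$-linear map $\psi_{\boldsymbol{b}} \colon (S/I)_{\boldsymbol{b}} \hookrightarrow (R/J)_{f(\boldsymbol{b})}$ for every $\boldsymbol{b} \in \mathbb{N}^s$.

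The heart of the argument is to show that for every $\boldsymbol{b} = (b_1,\dots,b_s) \in \mathcal{B}$ the multiplication map
\[
R_{\boldsymbol{c}_1}^{\otimes b_1} \otimes_k \cdots \otimes_k R_{\boldsymbol{c}_s}^{\otimes b_s} \longrightarrow R_{f(\boldsymbol{b})}
\]
is surjective, which is equivalent to $\varphi$ being surjective onto $R_{f(\boldsymbol{b})}$. I would prove this by induction on $|\boldsymbol{b}| = b_1 + \cdots + b_s$, with the trivial base case $|\boldsymbol{b}| = 1$ (forced by $b_1 \geq 1$ to be $\boldsymbol{b} = \boldsymbol{e}_1$). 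For the inductive step with $|\boldsymbol{b}| \geq 2$, choose an index $j$ with $b_j \geq 1$ such that $\boldsymbol{b}' := \boldsymbol{b} - \boldsymbol{e}_j$ still satisfies $b_1' \geq 1$: if $b_1 \geq 2$ take $j = 1$, and otherwise $b_1 = 1$ forces some $j \geq 2$ with $b_j \geq 1$ to exist. Because $\boldsymbol{c}_1 \in \reg(\mathcal{O}_X)$ and multigraded regularity is stable under adding elements of $\mathcal{K}$ (a standard property from \cite{maclagan2004multigraded}), we have
\[
f(\boldsymbol{b}') = \boldsymbol{c}_1 + \Bigl((b_1' - 1)\boldsymbol{c}_1 + \sum_{i \geq 2} b_i' \boldsymbol{c}_i\Bigr) \in \reg(\mathcal{O}_X).
\]
Applying Theorem \ref{2surjhom} with $\mathcal{F} = \mathcal{O}_X$, $\boldsymbol{p} = f(\boldsymbol{b}')$ and $\boldsymbol{q} = \boldsymbol{c}_j$ yields a surjection $R_{f(\boldsymbol{b}')} \otimes R_{\boldsymbol{c}_j} \twoheadrightarrow R_{f(\boldsymbol{b})}$; combining with the inductive surjection onto $R_{f(\boldsymbol{b}')}$ completes the step.

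The main obstacle is precisely the bookkeeping that we can always peel off some coordinate while preserving $b_1 \geq 1$, which is exactly why the lemma is restricted to $\boldsymbol{b} \in \mathcal{B}$: the sole regularity input available to Theorem \ref{2surjhom} in this setting is the hypothesis $\boldsymbol{c}_1 \in \reg(X)$, so every intermediate degree appearing in the induction must itself be of the form $f(\boldsymbol{b}')$ with $b_1' \geq 1$. Once surjectivity of $\varphi$ onto $R_{f(\boldsymbol{b})}$ is established for all $\boldsymbol{b} \in \mathcal{B}$, the injective maps $\psi_{\boldsymbol{b}}$ constructed above become surjective as well, and are therefore the desired isomorphisms.
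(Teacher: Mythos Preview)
Your proposal is correct and follows essentially the same approach as the paper: both define the graded $k$-algebra map $S \to R$ sending $x_{i,j}$ to a chosen basis element of $R_{\boldsymbol{c}_i}$, use Theorem~\ref{2surjhom} iteratively to prove that the induced map $S_{\boldsymbol{b}} \to R_{f(\boldsymbol{b})}$ is surjective for $\boldsymbol{b} \in \mathcal{B}$, and take $I$ to be the kernel of the composite with the quotient by $J$. Your inductive bookkeeping (peeling off an $\boldsymbol{e}_j$ while keeping $b_1' \geq 1$) makes explicit what the paper compresses into ``repeatedly apply Theorem~\ref{2surjhom}'', and your packaging of $I$ as the kernel of a single ring homomorphism $S \to \widetilde{R}/\widetilde{J}$ is marginally cleaner than the paper's degree-by-degree union of kernels, but the substance is identical.
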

\begin{proof}
Given $\boldsymbol{b} \in \mathcal{B}$, we define a map $\tau_{\boldsymbol{b}} \colon S_{\boldsymbol{b}} \rightarrow R_{f(\boldsymbol{b})}
$. Consider the morphism of $k$-algebras $\tau$ induced by
\begin{align*}
    \tau \colon S & \rightarrow R \\
    x_{i,k} & \mapsto \boldsymbol{e}^i_{k+1},
\end{align*}
where $\boldsymbol{e}^i_{k}$ is the $k$th element of the monomial basis for $R_{\boldsymbol{c_{i}}}$. The map $\tau_{\boldsymbol{b}}$ is defined by restricting $\tau$ to $S_{\boldsymbol{b}}$. Let $\pi \colon R \rightarrow R/J$ be the usual quotient map. To prove the lemma, we need to show that $\tau_{\boldsymbol{b}}$ is surjective. We then postcompose with $\pi |_{f(\boldsymbol{b})} \colon R_{f(\boldsymbol{b})} \rightarrow (R/J)_{f(\boldsymbol{b})}$ to obtain a surjective map to $(R/J)_{f(\boldsymbol{b})}$. To see that $\tau_{\boldsymbol{b}}$ is surjective for $\boldsymbol{b} \in \mathcal{B}$, we apply Theorem \ref{2surjhom} to observe that there is a surjective map
\[H^0(X,\mathcal{O}_X(b_1\boldsymbol{c}_1+\dots+(b_s-1)\boldsymbol{c}_s)) \otimes H^0(X,\mathcal{O}_X(\boldsymbol{c}_s)) \twoheadrightarrow H^0(X,\mathcal{O}_X(f(\boldsymbol{b}))).\]
We then repeatedly apply Theorem \ref{2surjhom} to obtain a surjective map
\begin{equation}\label{tenssurj}
\bigotimes_{i=1}^{s} H^0(X,\mathcal{O}_X(\boldsymbol{c}_i))^{\otimes b_i} \twoheadrightarrow H^0(X,\mathcal{O}_X(f(\boldsymbol{b}))).
\end{equation}
Note that there is a canonical isomorphism $H^0(X,\mathcal{O}_X(f(\boldsymbol{b}))) \cong R_{f(\boldsymbol{b})}$. Further, since $h^0(X,\mathcal{O}_X(\boldsymbol{c}_i))=n_i+1$, we can identify each $H^0(X,\mathcal{O}_X(\boldsymbol{c}_i))$ with $k[x_{i,0},\dots,x_{i,n_i}]_1$. The map \eqref{tenssurj} to $R_{f(\boldsymbol{b})}$ is induced by mapping $x_{i,k} \mapsto \boldsymbol{e}^i_{k+1}$, and then identifying tensor products of such elements with their product in $R$. Since this multiplication in $R$ commutes, this map factors through the vector space obtained by replacing each $H^0(X,\mathcal{O}_X(\boldsymbol{c}_i))^{\otimes b_i}$ with $\text{Sym}^{b_i}(H^0(X,\mathcal{O}_X(\boldsymbol{c}_i)))$, which we can itself identify with $k[x_{i,0},\dots,x_{i,n_i}]_{b_i}$. It follows that the map \eqref{tenssurj} factors through the vector space
\[k[x_{1,0},\dots,x_{1,n_1}]_{b_1} \otimes \dots \otimes k[x_{s,0},\dots,x_{s,n_s}]_{b_s} \cong S_b.\]
We now observe that the map induced from $S_{\boldsymbol{b}}$ to $R_{f(\boldsymbol{b})}$ is exactly the map $\tau_{\boldsymbol{b}}$ defined earlier. We therefore obtain a commutative diagram
\[\begin{tikzcd}[cramped]
	\bigotimes_{i=0}^{s} H^0(X,\mathcal{O}_X(\boldsymbol{c}_i))^{\otimes b_i} && H^0(X,\mathcal{O}_X(f(\boldsymbol{b}))) \\
	\bigotimes_{i=0}^{s} k[x_{i,0},\dots,x_{i,n_i}]_1 && R_{f(\boldsymbol{b})} \\
	 S_{\boldsymbol{b}} \\
	\bigotimes_{i=0}^{s} \text{Sym}^{b_i}(H^0(X,\mathcal{O}_X(\boldsymbol{c}_i)))
	\arrow[leftrightarrow, from=1-1, to=2-1, "\cong"]
	\arrow[two heads, from=2-1, to=3-1]
	\arrow[two heads, from=3-1, to=2-3, "\tau_{\boldsymbol{b}}"]
	\arrow[leftrightarrow, from=1-3, to=2-3, "\cong"]
	\arrow[two heads, from=1-1, to=1-3]
	\arrow[two heads, from=2-1, to=2-3]
	\arrow[leftrightarrow, from=3-1, to=4-1, "\cong"].
\end{tikzcd}\]
In particular since equation \eqref{tenssurj} is a surjection it follows that $\tau_{\boldsymbol{b}}$ is also a surjection. Consequently the induced map  $\widetilde{\tau_{\boldsymbol{b}}} \colon S_{\boldsymbol{b}} \rightarrow (R/J)_{f(\boldsymbol{b})}$ is also a surjection. We define an ideal $I = \bigcup_{\boldsymbol{b} \in \mathbb{N}^{s+1}, b_0 \geq 1} (\text{ker}(\widetilde{\tau_{\boldsymbol{b}}}))$. Observe that for $f \in \text{ker}(\widetilde{\tau_{\boldsymbol{b}}})$, $g \in S$ with $\deg(g)=\boldsymbol{b'}$, we have $fg \in \text{ker}(\widetilde{\tau_{\boldsymbol{b+b'}}})$. It follows that $I_{\boldsymbol{b}} = \text{ker}(\widetilde{\tau_{\boldsymbol{b}}})$. We conclude that we have an isomorphism of vector spaces $(S/I)_{\boldsymbol{b}} \cong (R/J)_{f(\boldsymbol{b})}$ for all $\boldsymbol{b} \in \mathcal{B}$. We will denote these isomorphisms by $\psi_{\boldsymbol{b}}$.
\end{proof}
\begin{remark}\label{picrankhighrmk}
    It follows from Lemma \ref{picrankhigh} that for any homogeneous $J \subseteq R$ there exists a homogeneous ideal $I \subseteq S$ such that $H_I(b_1,\dots,b_s)=H_J(f(b_1,\dots,b_s))$ for all $(b_1,\dots,b_s) \in \mathcal{B}$. It follows that $P_J(t_1,\dots,t_n)$ is admissible, with $f^{\#} P_J(u_1,\dots,u_s)=P_I(u_1,\dots,u_s)$.
\end{remark}
Although the situation is a little more complicated than the Picard-rank-2 case, we are now able to prove Theorem \ref{pichighpers}.
\begin{proof}[Proof of Theorem \ref{pichighpers}]
Let $S$ be as in Definition \ref{coxandf} and consider the ideal $I \subseteq S$ from Lemma \ref{picrankhigh} with $(S/I)_{\boldsymbol{b}} \cong (R/J)_{f(\boldsymbol{b})}$ for all $\boldsymbol{b} \in \mathcal{B}$. As established in Remark \ref{picrankhighrmk}, $P(t_1,\dots,t_s)$ is an admissible polynomial, with $f^{\#} P(u_1,\dots,u_s) = Q(u_1,\dots,u_s)$ for a Hilbert polynomial $Q(u_1,\dots,u_s)$ on $S$. We observe that for $\boldsymbol{b}=(b_1,\dots,b_s) \in \mathcal{B}$ we have $H_I(\boldsymbol{b})=H_J(f(\boldsymbol{b}))$. In particular we therefore have an equality of polynomials \[P_I(u_1,\dots,u_s)=f^{\#}P_J(u_1,\dots,u_s).\] We apply Theorem \ref{maintheorem1} to $I$ to find appropriate $(d_1,\dots,d_{s}) \in \mathcal{B}$ such that verifying 
\begin{equation*}
H_I(b_1,\dots,b_s)=Q(b_1,\dots,b_s)
\end{equation*}
for all $\boldsymbol{b}$ with $b_i \in \{d_{i},d_{i}+1\}$ guarantees that $P_I(u_1,\dots,u_s)=Q(u_1,\dots,u_s)$. Equivalently, verifying
\begin{equation*}\label{hilbpol}
H_J(f(b_1,\dots,b_s))=P(f(b_1,\dots,b_s)) 
\end{equation*}
for all possible $\boldsymbol{b}$ with $b_i \in \{d_{i},d_{i}+1\}$ guarantees that $f^{\#}P_J(u_1,\dots,u_s)=f^{\#}P(u_1,\dots,u_s)$. Thus, checking $H_J(r_1,\dots,r_s)=P(r_1,\dots,r_s)$ at exactly the points \[(r_1,\dots,r_s) \in \mathcal{D} = \{f(\boldsymbol{b}) \in \mathbb{N}^s \mid b_i \in \{d_i,d_i+1\} \quad \forall \quad i\}\] allows us to conclude that $f^{\#}P_J(u_1,\dots,u_s)=f^{\#}P(u_1,\dots,u_s)$. As highlighted in Definition \ref{coxandf}, $f^{\#}$ is injective, and so it follows that $P_J(t_1,\dots,t_s)=P(t_1,\dots,t_s)$.
\end{proof}
\begin{exmp}
    Consider the fan $\Sigma \subseteq \mathbb{R}^3$ with rays 
    \begin{align*}
    \rho_0=(1,0,0), \quad \rho_1=(0,1,0), \quad \rho_2=(-1,1,0), \\
    \rho_3=(0,-1,0), \quad \rho_4=(0,0,1), \quad \rho_5=(0,0,-1),
    \end{align*}
    and maximal cones generated by the subsets
    \begin{align*}
        \{\rho_0,\rho_1,\rho_4\},\{\rho_0,\rho_1,\rho_5\},\{\rho_1,\rho_2,\rho_4\},\{\rho_1,\rho_2,\rho_5\}, \\
        \{\rho_2,\rho_3,\rho_4\},\{\rho_2,\rho_3,\rho_5\},\{\rho_0,\rho_3,\rho_4\},\{\rho_0,\rho_3,\rho_5\}.
    \end{align*}
The associated normal toric variety $X$ is smooth and projective with Picard rank 3. After fixing an isomorphism $\pic(X) \cong \mathbb{Z}^3$ we write the Cox ring of $X$ as $R = k[y_0,\dots,y_5]$, where $\deg(y_0)=\deg(y_2) = (-1,1,0)$, $\deg(y_1)=(1,0,0)$, $\deg(y_3)=(0,1,0)$ and $\deg(y_4) = \deg(y_5)=(0,0,1)$. The nef cone of $X$ is generated by $\boldsymbol{c}_1=(-1,1,0)$, $\boldsymbol{c}_2=(0,1,0)$ and $\boldsymbol{c_3}=(0,0,1)$, and $\reg(R)=\mathcal{K}$, so we may set $\mathcal{C}=\{\boldsymbol{c}_1,\boldsymbol{c}_2,\boldsymbol{c_3}\}$. We have
\begin{align*}
    \quad R_{\boldsymbol{c}_1} = \left< y_0, y_2 \right>, \quad R_{\boldsymbol{c}_2} = \left< y_3, y_0y_1,y_1y_2 \right>, \quad R_{\boldsymbol{c_3}}= \left< y_4,y_5 \right>,
\end{align*}
so we set \[S = k[x_{1,0},x_{1,1},x_{2,0},x_{2,1},x_{2,2},x_{3,0},x_{3,1}],\] which is the Cox ring of $ \mathbb{P}^1 \times \mathbb{P}^2 \times \mathbb{P}^1$. Consider the ideal $J = (y_3,y_0^2y_1,y_0y_1y_2) \subseteq R$. The ideal $I$ in Lemma \ref{picrankhigh} is given by $I=(x_{2,0},x_{1,0}x_{2,1},x_{1,0}x_{2,2},x_{1,1}x_{2,1})$, which is multilex. We have $(S/I)_{\boldsymbol{b}} \cong (R/J)_{f(\boldsymbol{b})}$ for $\boldsymbol{b} \in \mathbb{N}^3$ with $b_1 \geq 1$. Consider the polynomial $P(t_1,t_2,t_3) = t_3+1$, with $Q(u_1,u_2,u_3)=f^{\#}P(u_1,u_2,u_3) = u_3 + 1$. To check that $P_J(t_1,t_2,t_3)=P(t_1,t_2,t_3)$ we need only apply Theorem \ref{maintheorem1} to $I \subseteq S$ to find appropriate $d_1,d_2,d_3 $ such that checking $H_I(b_1,b_2,b_3)=Q(b_1,b_2,b_3)$ for $b_i \in \{d_i,d_i+1\}$ confirms that $P_I=Q$. We now observe that modulo a change of notation this is identical to the case of Example \ref{exmp4}, and set $(d_1,d_2,d_3)=(2,2,2)$. We calculate the values of $f(\boldsymbol{b})$ for $\boldsymbol{b}=(b_1,b_2,b_3) \in \mathbb{N}^3$ with $2 \leq b_i \leq 3$. We obtain eight vertices for our zonotope: 
\begin{align*}
    \mathcal{D} = \{(-2,4,2),(-2,4,3),(-2,5,2),(-2,5,3), \\
    (-3,5,2),(-3,5,3),(-3,6,2),(-3,6,3)\}.
\end{align*}
Checking that $H_I(r_1,r_2,r_3)=P(r_1,r_2,r_3)$ for all $(r_1,r_2,r_3) \in \mathcal{D}$ guarantees that $P_J=P$.
\end{exmp}


\begin{bibdiv}
\begin{biblist}*{labels={alphabetic}}
\bibselect{references2}
\end{biblist}
\end{bibdiv}

\end{document}